\DeclareMathOperator{\Id}{Id}
\DeclareMathOperator{\Real}{Re}
\DeclareMathOperator{\tr}{tr}
\DeclareMathOperator{\spn}{span}
\DeclareMathOperator{\ran}{ran}
\DeclareMathOperator{\supp}{supp}
\DeclareMathOperator{\divr}{div}
\DeclareMathOperator{\proj}{proj}
\newcommand{\vertiii}[1]{{\left\vert\kern-0.25ex\left\vert\kern-0.25ex\left\vert #1 \right\vert\kern-0.25ex\right\vert\kern-0.25ex\right\vert}}
\newcommand{\fk}{{F_w(\mathcal H_\tau)}}
\newcommand{\fkf}{{F_w^{\otimes}(\mathcal H_\tau)}}
\newcommand{\fkn}{{F_w(\mathcal H_{\tau,N})}}
\newcommand{\hfk}{\hat F_w(\mathcal H_\tau)}
\newcommand{\hfkn}{\hat F_w(\mathcal H_{\tau, N})}
\newtheorem{thm}{Theorem}
\newtheorem{cor}[thm]{Corollary}
\newtheorem{prop}[thm]{Proposition}
\newtheorem{lem}[thm]{Lemma}
\theoremstyle{definition}
\newtheorem{defn}[thm]{Definition}
\theoremstyle{remark}
\newtheorem{rk}[thm]{Remark}
\crefname{prty}{property}{properties}
\crefname{defn}{Definition}{Definitions}
\crefname{thm}{Theorem}{Theorems}
\crefname{cor}{Corollary}{Corollaries}
\crefname{lem}{Lemma}{Lemmas}
\crefname{prop}{Proposition}{Propositions}
\title{Second quantization for classical nonlinear dynamics}
\author[D.\ Giannakis et al.]{Dimitrios Giannakis}
\address{Department of Mathematics, Dartmouth College, Hanover, NH 03755, USA.}
\email{dimitrios.giannakis@dartmouth.edu}
\author[]{Mohammad Javad Latifi Jebelli}
\address{Department of Mathematics, Dartmouth College, Hanover, NH 03755, USA.}
\email{mohammad.javad.latifi.jebelli@dartmouth.edu}
\author[]{Michael Montgomery}
\address{Department of Mathematics, Dartmouth College, Hanover, NH 03755, USA.}
\email{michael.r.montgomery@dartmouth.edu}
\author[]{Philipp Pfeffer}
\address{Institut f\"ur Thermo- und Fluiddynamik, Technische Universit\"at Ilmenau, D-98684 Ilmenau, Germany.}
\email{philipp.pfeffer@tu-ilmenau.de}
\author[]{J\"org Schumacher}
\address{Institut f\"ur Thermo- und Fluiddynamik, Technische Universit\"at Ilmenau, D-98684 Ilmenau, Germany.}
\email{joerg.schumacher@tu-ilmenau.de}
\author[]{Joanna Slawinska}
\address{Department of Mathematics, Dartmouth College, Hanover, NH 03755, USA.}
\email{joanna.m.slawinska@dartmouth.edu}
\begin{document}

\begin{abstract}
    Using techniques from many-body quantum theory, we propose a framework for representing the evolution of observables of measure-preserving ergodic flows through infinite-dimensional rotation systems on tori. This approach is based on a class of weighted Fock spaces $\fk$ generated by a 1-parameter family of reproducing kernel Hilbert spaces $\mathcal H_\tau$, and endowed with commutative Banach algebra structure under the symmetric tensor product using a subconvolutive weight $w$. We describe the construction of the spaces $\fk$ and show that their Banach algebra spectra, $\sigma(\fk)$, decompose into a family of tori of potentially infinite dimension. Spectrally consistent unitary approximations $U^t_\tau$ of the Koopman operator acting on $\mathcal H_\tau$ are then lifted to rotation systems on these tori akin to the topological models of ergodic systems with pure point spectra in the Halmos--von Neumann theorem. Our scheme also employs a procedure for representing observables of the original system by polynomial functions on finite-dimensional tori in $\sigma(\fk)$ of arbitrarily large degree, with coefficients determined from pointwise products of eigenfunctions of $U^t_\tau$. This leads to models for the Koopman evolution of observables on $L^2$ built from tensor products of finite collections of approximate Koopman eigenfunctions. Numerically, the scheme is amenable to consistent data-driven implementation using kernel methods. We illustrate it with applications to Stepanoff flows on the 2-torus and the Lorenz~63 system. Connections with quantum computing are also discussed.
\end{abstract}

\maketitle

%###############################################################
\section{Introduction}
\label{sec:intro}

The pursuit of connections between classical and quantum dynamics dates back to the origins of quantum mechanics and modern ergodic theory in the first few decades of the 20th century. Seminal work of Koopman and von Neumann \cites{Koopman31,KoopmanVonNeumann32} simultaneously established the foundations of operator-theoretic representations of classical dynamics by means of composition operators (now widely known as Koopman operators) and put forward a description of the Liouville evolution of classical probability densities through quantum mechanical wavefunctions (so-called Koopman--von Neumann waves \cite{Mauro02}). The Koopman--von Neumann approach has found applications in areas including Hamiltonian mechanics \cites{WilkieBrumer97a,WilkieBrumer97b}, stochastic dynamics \cite{DellaRiciaWiener66}, and hybrid classical--quantum dynamics \cite{BondarEtAl19}.

A related problem that has received significant attention in recent years is simulation of classical dynamical systems by quantum systems \cites{BenentiEtAl01,Kacewicz06,LeytonOsborne08,BerryEtAl17,ElliottGu18,Joseph20,LloydEtAl20,KalevHen21,GiannakisEtAl22}. Example applications include viscous fluid flow \cites{BharadwajSreenivasan20,Gaitan20}, thermal convection \cites{PfefferEtAl22}, transport \cite{MezzacapoEtAl15}, wave propagation \cite{CostaEtAl19}, climate dynamics \cite{TenniePalmer23}, plasma dynamics \cites{EngelEtAl19,DodinStartsev21}, and energy science \cite{JosephEtAl23}. A primary motivation underpinning these efforts is the advent of quantum computing with its premise to deliver transformational advances in computing capabilities.

Yet another line of research has been on quantum-inspired techniques, i.e., methods based on the mathematical framework of quantum theory that are otherwise implemented classically. These methods leverage properties of spaces of operators associated with quantum systems that aid the design of structure-preserving approximation schemes (e.g., positivity-preserving projections) in applications such as data assimilation \cites{Giannakis19,FreemanEtAl23} and dynamical closure \cite{FreemanEtAl24}.

In broad terms, techniques for quantum simulation of classical dynamics are based on mappings of classical states and observables into states and observables of a quantum system, together with a corresponding mapping of the classical dynamical evolution maps into an evolution of quantum states. Here, a major challenge stems from the fact that classical systems of interest are typically nonlinear, whereas quantum dynamics proceeds by unitary (linear) transformations of the quantum state. A popular strategy to overcome this challenge is to represent polynomial nonlinearities in the equations of motion as multilinear maps that can be treated linearly in a tensor product Hilbert space using appropriate time-stepping schemes. Variants of this strategy have been employed in schemes for solving nonlinear ordinary differential equations (ODEs) \cites{LeytonOsborne08,LloydEtAl20} and certain classes of partial differential equations (PDEs) after space discretization \cites{Gaitan20,LiuEtAl21}.

Rather than attempting to directly simulate the (nonlinear) evolution the classical state, an alternative approach to state-space-centric schemes employs operator techniques from ergodic theory \cites{Baladi00,EisnerEtAl15} and the Koopman--von Neumann approach to simulate the evolution of observables and measures under the dynamics \cites{Joseph20,GiannakisEtAl22}. Indeed, for a measure-preserving, invertible transformation $\Phi \colon X \to X $ of a probability space $(X, \Sigma, \mu)$, the Koopman and transfer (Perron--Frobenius) operators on $L^2(\mu)$, $U f = f \circ \Phi$ and $ P f = f \circ \Phi^{-1}$, respectively, are unitary (and form a dual pair, $U^* = P$), and thus are natural candidates for approximation by quantum algorithms. By the Stone theorem on 1-parameter, strongly continuous unitary groups \cite{Stone32}, the Koopman group $ \{ U^t\colon L^2(\mu) \to L^2(\mu) \}_{t\in \mathbb R}$ induced by a measure-preserving flow $\Phi^t \colon X \to X$, $t \in \mathbb R$, is completely characterized by its generator---a skew-adjoint operator $V\colon D(V) \to L^2(\mu)$ defined on a dense domain $D(V) \subseteq L^2(\mu)$ as the $L^2(\mu)$-norm limit
\begin{equation}
    \label{eq:generator}
    V f = \lim_{t \to 0} (U^t f - f) / t,
\end{equation}
and giving the time-$t$ Koopman operator by exponentiation, $U^t = e^{t V}$. In this continuous-time setting, the self-adjoint operator $V/i$ plays an analogous role to a quantum mechanical Hamiltonian, and is therefore a natural object to study when making connections between classical and quantum systems.

\subsection{Our contributions}
\label{sec:contrib}

Focusing on measure-preserving, ergodic flows in continuous time, the primary challenge we seek to address is to build a quantum mechanical representation of classical dynamics that consistently approximates the Koopman evolution generated by $V$ from~\eqref{eq:generator} for systems with non-trivial continuous spectra. Indeed, a hallmark result in ergodic theory is that a measure-preserving system is weak-mixing if and only if the unitary Koopman group on $L^2(\mu)$ has a single, simple eigenvalue at 1 (with a constant corresponding eigenfunction), and no other elements in the point spectrum \cite{Halmos56}. In continuous time, this means that, aside from a simple eigenvalue at 0, the spectral measure of the generator is continuous. Thinking of weak-mixing as a signature of high dynamical complexity (``chaos'') in a measure-theoretic sense, consistent approximation of the continuous spectrum is thus relevant to quantum simulation of a broad range of nonlinear systems encountered in applications.

Our approach to address this problem consists of the following principal elements:
\begin{itemize}
    \item Spectral regularization techniques for Koopman operators \cites{DasEtAl21,GiannakisValva24,GiannakisValva24b} that approximate the generator $V$ by a 1-parameter family of skew-adjoint, diagonalizable operators $W_\tau: D(W_\tau) \to \mathcal H_\tau$, $\tau>0$, each acting on a reproducing kernel Hilbert space (RKHS) $\mathcal H_\tau$ of continuous functions on state space $X$. Here, $\tau$ is a regularization parameter and $W_\tau$ converges to $V$ as $\tau \to 0^+$ in an appropriate spectral sense.
    \item Constructions of so-called reproducing kernel Hilbert algebras (RKHAs) \cites{GiannakisMontgomery24,DasEtAl23,DasGiannakis23}, which are RKHSs with coalgebra structure with respect to the Hilbert space  tensor product and Banach algebra structure with respect to the pointwise product of functions.
    \item A class of weighted symmetric Fock spaces (described, to our knowledge, for the first time in this paper) with Banach algebra structure with respect to the tensor product. For our purposes, a key property of these spaces is that they are isomorphic to RKHAs of continuous functions on infinite-dimensional tori embedded in their spectra.
    \item Previously developed quantum algorithms for approximating the Koopman evolution of observables of rotation systems on tori \cite{GiannakisEtAl22}.
\end{itemize}
Building on these tools, we propose a provably consistent framework for quantum simulation of observables of continuous-time measure-preserving ergodic flows with arbitrary (i.e., pure-point, continuous, and mixed) spectral characteristics. The mathematical framework of many-body quantum  theory \cite{Lehmann04}, also known as second quantization, plays a central role in our constructions, particularly through the use of Fock space theory.

In essence, our scheme approximates the unitary Koopman evolution on $L^2(\mu)$ by a family of rotation systems on tori, $\mathbb T_{\tau,x}$, of potentially infinite dimension, parameterized by the regularization parameter $\tau$ and the initial condition $x \in X$. The tori $\mathbb T_{\tau,x}$ are realized as weak-$^*$ compact subsets of the spectrum $\sigma(\fk)$ of an abelian Banach algebra $\fk$ built as a weighted Fock space generated by the RKHS $\mathcal H_\tau$ that we employ for spectral approximation of the Koopman operator. Furthermore, the state space dynamics $\Phi^t\colon X \to X$ embeds naturally in $\sigma(\fk)$ by means of a feature map. As a result, we can interpret the torus rotation system obtained by regularization at the Koopman/transfer operator level as an approximate topological model of the unperturbed state space dynamics. This construction should be of independent interest in broader contexts than quantum mechanical representation of classical dynamics.

Each torus $\mathbb T_{\tau,x}$ can be approximated by a sequence of $d$-dimensional tori $\mathbb T_{\sigma,\tau,d,x} \subset \sigma(\fk)$ paramaterized by a smoothing parameter $\sigma>0$, with corresponding rotation systems generated by $d$ basic frequencies. We put forward a procedure for representing continuous classical observables as polynomials of arbitrarily large degree $m$ in the Fourier basis on $\mathbb T_{\sigma,\tau,d,x}$. Increasing $m$ allows to capture spectral information from large-dimensional spaces generated by tensor products of eigenfunctions of $W_\tau$. This enhances prediction skill and should also aid the theoretical efficiency of implementations of our scheme on quantum computing platforms.

In this paper, we describe the mathematical formulation of our second-quantization approach. Moreover, we demonstrate this framework by means of numerical experiments involving measure-preserving ergodic dynamical systems with aperiodic behavior: a Stepanoff flow on the 2-torus \cite{Oxtoby53} and the Lorenz 63 (L63) system \cite{Lorenz63} on $\mathbb R^3$. These experiments are performed on classical hardware using classical numerical methods and demonstrate improved prediction skill over conventional Koopman operator approximation methods utilizing the same number of eigenfunctions. A gate-based implementation of our algorithms on simulated on actual quantum hardware is beyond the scope of this paper, and will be addressed elsewhere.

\subsection{Plan of the paper}

In \cref{sec:prelims}, we introduce the class of dynamical systems under study and establish basic notational conventions. This is followed by a survey of relevant previous work in \cref{sec:previous_work}. In \cref{sec:overview}, we describe our second quantization approach, relegating certain constructions and proofs to \cref{sec:fock_space,sec:fock_emb}. Specifically, \cref{sec:fock_space} describes the construction and properties of the symmetric weighted Fock space $\fk$, and \cref{sec:fock_emb} our schemes for representing observables of the original dynamical system as vectors in $\fk$, including the polynomial functions on the tori $\mathbb T_{\sigma,\tau,d,x}$. \Cref{sec:data_driven_overview} contains an overview of the data-driven formulation of our scheme. In \cref{sec:experiments}, we present the results from our numerical experiments testing these methods. A discussion and our primary conclusions are included in \cref{sec:discussion}. \Cref{app:markov,app:approx_id,app:spectral_approx} collect auxiliary material on Markov semigroups employed in the construction of $\mathcal H_\tau$ and spectral approximation of Koopman operators, respectively.

%###############################################################
\section{Preliminaries and notation}
\label{sec:prelims}

\subsection{Dynamical system}
\label{sec:dynamical_system}

We consider a continuous flow $\Phi^t \colon X \to X$, $t \in \mathbb R$, on a metrizable space $X$ with an ergodic invariant Borel probability measure $\mu$ with compact support $X_\mu \subseteq X$. For $p \in [1, \infty]$, we let $ U^t \colon L^p(\mu) \to L^p(\mu)$ denote the associated time-$t$ Koopman operator that acts isometrically by composition with the flow, $ U^t f = f \circ \Phi^t$. As noted in \cref{sec:intro}, the Koopman group $ \{ U^t \}_{t \in \mathbb R} $ on $H := L^2(\mu)$ is strongly continuous and unitary, and thus has a skew-adjoint generator $V \colon D(V) \to H$ defined by~\eqref{eq:generator}. In addition, $U^t$ acts as a $^*$-isomorphism of the abelian von Neumann algebra $L^\infty(\mu)$ with respect to pointwise multiplication and complex conjugation,
\begin{displaymath}
    U^t(fg) = (U^t f) (U^t g), \quad U^t(f^*) = (U^t f)^*, \quad \forall f, g \in L^\infty(\mu).
\end{displaymath}

For every $p \in (1, \infty]$, $U^t$ has a predual, $P^t \colon L^q(\mu) \to L^q(\mu)$ for $\frac{1}{p} + \frac{1}{q} = 1$, defined by $P^t g = g \circ \Phi^{-t}$ and satisfying
\begin{displaymath}
    \langle U^t f, g \rangle = \langle f, P^t g \rangle, \quad \forall f \in L^p(\mu), \quad \forall g \in L^q(\mu),
\end{displaymath}
where $\langle \cdot, \cdot \rangle$ denotes the natural pairing between $L^p(\mu)$ and $L^q(\mu)$. The operators $P^t$ are known as transfer, or Perron--Frobenius, operators, and govern the evolution of densities with respect to the invariant measure $\mu$ under the dynamics. Specifically, given a finite Borel measure $\nu$ with density $g = \frac{d\nu}{d\mu} \in L^q(\mu)$, then $P^t g$ is the density $\frac{d\nu_t}{d\mu}$ of the pushforward measure $ \nu_t = \Phi^t_* \nu$ under the dynamics. The transfer operator $P^t \colon L^1(\mu) \to L^1(\mu)$ is an integral-preserving Markov operator with invariant measure $\mu$; i.e., (i) $P^t 1_X = 1_X$ for the constant function $1_X$ equal everywhere to 1; (ii) $P^t g \geq 0$ for $g \geq 0$; and (iii) $\int_X P^t g\, d\mu = \int_X g \, d\mu$ for all $g \in L^1(\mu)$. For further details on Koopman and transfer operators we refer the reader to \cites{Halmos56,Baladi00,EisnerEtAl15}.

Throughout the paper, we will assume that the support of $\mu$ is contained in a $C^1$ compact manifold $M \subseteq X$ that is forward-invariant under the flow, $\Phi^t(M) \subseteq M$ for all $t \geq 0$. We also assume that the restriction of $\Phi^t$ on $M$ is generated by a continuous vector field $\vec V \colon M \to TM$. This means that
\begin{equation}
    \label{eq:generator_c1}
    \iota \circ \vec V \cdot \nabla f = V \circ \iota f, \quad \forall f \in C^1(M),
\end{equation}
where $\iota \colon C(M) \to H$ is the map from continuous functions on $M$ to their corresponding equivalence classes in $H$. In this setting, the Koopman operator $f \mapsto f \circ \Phi^t$  acts as a $^*$-homomorphism on $C(M)$, viewed as an abelian $C^*$-algebra with respect to pointwise multiplication, complex conjugation and the supremum norm. We will continue to denote this map as $U^t \colon C(M) \to C(M)$ similarly to the Koopman operator on $L^p(\mu)$.

\subsection{Notation}
\label{sec:notation}

All vector spaces in this work will be over the complex numbers and all Hilbert spaces will be separable. For a normed space $\mathbb V$, $(\mathbb V)_R \subset \mathbb V$ will denote the closed ball of radius $R$ centered at the origin. Moreover, we will denote the space of bounded linear maps between $\mathbb V$ and a Banach space $\mathbb E$ as $B(\mathbb V, \mathbb E)$, and $\lVert A \rVert$ will be the operator norm of $A \in B(\mathbb V, \mathbb E)$. We will use the abbreviation $B(\mathbb E) \equiv B(\mathbb E, \mathbb E)$. Given an operator $A \colon D(A) \to \mathbb E$ defined on a subspace $D(A) \subseteq \mathbb E$ of a Banach space $\mathbb E$, $\rho(A)$, $\sigma(A)$, and $\sigma_p(A)$ will denote the resolvent set, spectrum, and point spectrum of $A$, respectively. Moreover, for $z \in \rho(A)$, $R(z, A) = (z-A)^{-1} \in B(\mathbb E)$ will denote the corresponding resolvent operator. An inner product on a vector space $\mathbb V$ will be denoted as $\langle \cdot, \cdot \rangle_{\mathbb V}$ and will be taken to be conjugate-linear in the first argument. If $\mathbb V$ is a closed subspace of a Hilbert space $\mathbb H$, $\proj_{\mathbb V} \in B(\mathbb H)$ will be the orthogonal projection with range $\mathbb V$.

If $\mathcal H$ is an RKHS of complex-valued functions on a set $\mathbb X$ with reproducing kernel $k \colon \mathbb X \times \mathbb X \to \mathbb C$, we will let $k_x = k(x, \cdot) \in \mathcal H$ be the kernel section at $x \in \mathbb X$ and $\delta_x \in \mathcal H^*$  the corresponding pointwise evaluation functional, $\delta_x = \langle k_x, \cdot\rangle_{\mathcal H}$. Moreover, given a subset $S \subseteq \mathbb X$, $\mathcal H(S)$ will be the closed subspace of $\mathcal H$ defined as $\mathcal H(S) = \overline{\spn\{k_x: x \in S\}}^{\lVert \cdot\rVert_{\mathcal H}}$. Note that $\mathcal H(S)$ is an RKHS of functions on $\mathbb X$, whose kernel section at $x \in \mathbb X$ is equal to the orthogonal projection of $k_x$ onto $\mathcal H(S)$. Moreover, $\mathcal H(S)$ is isomorphic as an RKHS to the restriction $\mathcal H\rvert_S$ of $\mathcal H$ onto $S$, equipped with $k\rvert_{S\times S}$ as the reproducing kernel.

Given a set $\mathbb X$, $1_S \colon X \to \mathbb R$ will denote the characteristic function of a subset $S \subseteq X$. If $\mathbb X$ is a topological space, $\mathcal B(\mathbb X)$ will denote the Borel $\sigma$-algebra of subsets of $\mathbb X$.

%###############################################################
\section{Background and previous work}
\label{sec:previous_work}

\subsection{Quantum simulation of systems with pure point spectra}
\label{sec:pure_point_spec}

In \cite{GiannakisEtAl22} a technique was developed for quantum simulation of a class of measure-preserving, ergodic flows with pure point spectra; i.e., measure-preserving flows for which the union of eigenspaces of the Koopman operator is dense in $L^p(\mu)$, $p \in [1, \infty)$. These systems are examples of highly structured dynamics that in many ways is antithetical to weak-mixing.

By the Halmos--von Neumann theorem \cite{HalmosVonNeumann42}, every pure-point-spectrum system is isomorphic in a measure-theoretic sense to a rotation system on a compact abelian group. In discrete time, this implies that the point spectrum $\sigma_p(U)$ of the Koopman operator is a subgroup of $\mathbb T^1$. In continuous time, the point spectrum $\sigma_p(V)$ of the generator is an additive subgroup of the imaginary line, giving, by the spectral mapping theorem, the point spectra of the Koopman operators as a group homomorphism, $\sigma_p(V) \ni i \omega \mapsto e^{i\omega t} \in \sigma_p(U^t)$.

The paper \cite{GiannakisEtAl22} studied the case of an ergodic torus rotation $R^t\colon \mathbb T^d \to \mathbb T^d$
\begin{displaymath}
    R^t(\theta) = \theta + \alpha \cdot \theta \mod 2 \pi, \quad t \in \mathbb R,
\end{displaymath}
where $\alpha = (\alpha_1, \ldots, \alpha_d) \in \mathbb R^d$ are rationally-independent frequency parameters. This system is a canonical representative in the measure-theoretic isomorphism class of pure-point-spectrum, continuous-time ergodic systems with spectra generated by $d$ basic frequencies. Specifically, we have $\sigma_p(V) = \{ i(j_1 \alpha_1 + \cdots + j_d \alpha_d): j_1, \ldots, j_d \in \mathbb Z \} $, so the point spectrum of the generator is isomorphic to the Pontryagin dual $\widehat{\mathbb T^d} \simeq \mathbb Z^d$ of the state space of the dynamics by rational independence of $\alpha_1, \ldots, \alpha_d$.

Here, the group structure of $\sigma_p(V)$ is a manifestation of the fact that $V$ obeys the Leibniz rule,
\begin{equation}
    \label{eq:leibniz}
    V(fg) = (V f) g + f (V g),
\end{equation}
for any $f, g \in D(V)$ for which the left-hand-side and right-hand side of the above equation are well-defined. In fact, \cite{TerElstLemanczyk17} showed that satisfying~\eqref{eq:leibniz} on the algebra $L^\infty(\mu) \cap D(V)$ is a necessary and sufficient condition for a skew-adjoint operator $V\colon D(V) \to L^2(\mu)$ to be the generator of a unitary Koopman group (i.e., a one-parameter unitary group of composition operators), so one can consider the Leibniz rule as a fundamental structural property of the generators of continuous-time classical dynamical systems.

\subsubsection{Choice of Hilbert space}

Using results from harmonic analysis \cites{FeichtingerEtAl07,DasGiannakis23,DasEtAl23}, the approach of \cite{GiannakisEtAl22} was to build a quantum system on a function space $\mathfrak A \subset C(\mathbb T^d)$ that is simultaneously a reproducing kernel Hilbert space (RKHS) and a Banach $^*$-algebra with respect to pointwise function multiplication and complex conjugation. On this space $\mathfrak A$, a rotation system $R^t \colon \mathbb T^d \to \mathbb T^d$ induces unitary Koopman operators $\hat U^t \colon \mathfrak A \to \mathfrak A$ analogously to $U^t$ on $L^2(\mu)$, and the spectrum of the skew-adjoint generator $\hat W\colon D(\hat W) \to \mathfrak A$ with $D(\hat W) \subseteq \mathfrak A$ is again $\sigma_p(\hat W) \simeq \mathbb Z^d$. Moreover, every classical observable $f \in \mathfrak A$ has an associated quantum observable $M_f \in B(\mathfrak A)$ that acts as a multiplication operator, $M_f g = fg$.

The RKHS $\mathfrak A$ has a translation-invariant reproducing kernel $k\colon \mathbb T^d \times \mathbb T^d \to \mathbb R_{>0}$ obtained from the inverse Fourier image of an integrable, strictly positive, symmetric, subconvolutive function  on the dual group $\widehat{\mathbb T^d} \simeq \mathbb Z^d$; i.e., a function $\lambda\colon \mathbb Z^d \to \mathbb R_{>0}$ satisfying
\begin{displaymath}
    \lambda \in L^1(\mathbb Z^d), \quad \lambda(j) > 0, \quad \lambda(-j) = \lambda(j), \quad \lambda * \lambda(j) \leq C \lambda(j),
\end{displaymath}
where $L^1(\mathbb Z^d)$ is defined with respect to the counting measure (equivalently, a Haar measure on $\widehat{\mathbb T^d}$). The generator $\hat W$ admits the diagonalization
\begin{displaymath}
    \hat W \psi_j = i \omega_j \psi_j, \quad \psi_j = \sqrt{\lambda(j)} \phi_j, \quad \omega_j = j_1 \alpha_1 + \cdots + j_d \alpha_d,
\end{displaymath}
where $\phi_j\colon \mathbb T^d \to \mathbb C$ are characters (Fourier functions) in the dual group $\widehat{\mathbb T^d}$, $\phi_j(\theta) = e^{i j \dot \theta}$, indexed by $j = (j_1, \ldots, j_d) \in \mathbb Z^d$. Moreover, the eigenfunctions $\psi_j$ form an orthonormal basis of $\mathfrak A$, and $\omega_j \in \mathbb R$ are corresponding eigenfrequencies.

The simultaneous RKHA and Banach algebra structure of $\mathfrak A$ provides several useful properties for embedding classical dynamics into quantum dynamics and for building associated quantum algorithms, which we outline below. For detailed treatments of RKHSs we refer the reader to \cites{SteinwartChristmann08,PaulsenRaghupathi16}. Further details on weight functions in harmonic analysis and associated multiplication/convolution algebras can be found in \cites{Feichtinger79,Grochenig07,Kaniuth09}.

\subsubsection{Quantum embedding of classical dynamics}
\label{sec:quantum_embedding_pure_point_spec}

Recall that for the space $B(\mathbb H)$ of bounded operators on a Hilbert space $\mathbb H$, viewed as a von Neumann algebra with respect to operator composition and adjoint, the space of normal states can be identified with the set of density operators on $\mathbb H$, i.e., the set of positive, trace-class operators $\rho \colon \mathbb H \to \mathbb H$ of unit trace \cite{Takesaki01}. We denote this set as $Q(\mathbb H)$ and interpret it as the set of quantum states on $\mathbb H$. Every $\rho \in Q(\mathbb H)$ induces a state $\mathbb E_\rho \colon B(\mathbb H) \to \mathbb C$ such that $\mathbb E_\rho a = \tr(\rho a)$. The number $\mathbb E_\rho a$ corresponds to the expectation of quantum observable $a$ with respect to the quantum state $\rho$.

When $\mathbb H = \mathcal H$ is an RKHS of functions on a set $X$, we have a map $\bm \varphi\colon X \to Q(\mathcal H)$ of classical states into quantum states induced by the feature map,
\begin{equation}
    \label{eq:feature_map}
    \varphi\colon X \to \mathcal H, \quad \varphi(x) = k_x, \quad k_x = k(x, \cdot),
\end{equation}
where $k\colon X \times X \to \mathbb C$ is the reproducing kernel of $\mathcal H$. Specifically, defining $\bm \varphi(x) \equiv \rho_x = \langle \xi_x, \cdot\rangle_{\mathcal H} \xi_x$ with $\xi_x = \varphi(x) / \sqrt{k(x, x)}$, one readily verifies that $\rho_x$ is a rank-1 quantum state that projects along the unit vector $\xi_x \in \mathcal H$. If the feature map $\varphi$ is injective (which is the case for many RKHS examples; e.g., \cite{SriperumbudurEtAl11}), then so is $\bm \varphi$, so we have an embedding on classical states into quantum states. If, in addition, $\mathcal H = \mathfrak A$ is Banach algebra with respect to pointwise function multiplication, there is a faithful representation $\pi\colon \mathfrak A \to B(\mathfrak A)$ of classical observables in $\mathfrak A$ as multiplication operators in $B(\mathfrak A)$,
\begin{equation}
    \label{eq:mult_rep}
    (\pi f) g = fg, \quad \forall g \in \mathfrak A.
\end{equation}

Let $\delta_x \colon \mathfrak A \to \mathbb C$ be the pointwise evaluation functional at $x \in X$, where $\delta_x = \langle k_x, \cdot \rangle_{\mathfrak A}$ since $\mathfrak A$ is an RKHS \cite{PaulsenRaghupathi16}. One readily verifies that the quantum embeddings of states and observables through $\bm \varphi$ and $\pi$, respectively, are consistent with pointwise evaluation,
\begin{displaymath}
    \mathbb E_{\bm \varphi(x)} (\pi f) = \delta_x f = f(x), \quad \forall f \in \mathfrak A, \quad \forall x \in \mathbb X.
\end{displaymath}
In the case of the rotation system on $X = \mathbb T^d$, these embeddings are also compatible with dynamical evolution. Defining the adjoint actions $\bm R^t \colon Q(\mathfrak A) \to Q(\mathfrak A)$ and $\bm U^t \colon B(\mathfrak A) \to B(\mathfrak A)$ of the Koopman group on quantum states and observables as $\bm R^t(\rho) = \hat U^{t*} \rho \hat U^t$ and $\bm U^t A = \hat U^t A \hat U^{t*}$, respectively, we have $ \bm \varphi \circ R^t = \bm R^t \circ \bm \varphi$ and $\pi \circ \hat U^t = \bm U^t \circ \pi$; that is, the quantum feature map $\bm \varphi$ and multiplier representation $\pi$ intertwine the classical and quantum evolutions of states and observables, respectively. Equivalently, the following two diagrams commute:
\begin{displaymath}
    \begin{tikzcd}
        \mathbb T^d \ar[r, "R^t"] \ar[d, "\bm \varphi", swap] & \mathbb T^d \ar[d, "\bm \varphi"] \\
        Q(\mathfrak A) \ar[r, "\bm R^t"] & Q(\mathfrak A)
    \end{tikzcd},
    \quad
    \begin{tikzcd}
        \mathfrak A \ar[r,"\hat U^t"] \ar[d,"\pi",swap] & \mathfrak A \ar[d,"\pi"] \\
        B(\mathfrak A) \ar[r,"\bm U^t"] & B(\mathfrak A)
    \end{tikzcd}
    .
\end{displaymath}

\subsection{Spectral approximation for systems with continuous spectra}
\label{sec:spectral_approx}

Let us recall the following fundamental results on unitary Koopman groups associated with measure-preserving ergodic flows (e.g., \cite{Halmos56}):

\begin{thm}
    \label{thm:ergodic_splitting}
    With the notation and assumptions of \cref{sec:dynamical_system}, the Hilbert space $H$ admits a $U^t$-invariant orthogonal decomposition $H = H_p \oplus H_c$, where the subspaces $H_p$ and $H_c$ have the following properties.
    \begin{enumerate}[(i)]
        \item $H_p$ admits an orthonormal basis $ \{ \xi_j \}$ consisting of eigenvectors of the generator,
            \begin{displaymath}
                V \xi_j = i \omega_j \xi_j, \quad \omega_j \in \mathbb R,
            \end{displaymath}
            where all eigenvalues $i \omega_j$ are simple and can be indexed by integers $j$ such that $\omega_{-j} = -\omega_j$ (in particular, $\omega_0 = 0$). Moreover, the set $ \{ i \omega_j \}$ constitutes the point spectrum $\sigma_p(V)$, and the corresponding eigenvectors can be chosen such that $\xi_{-j} = \xi_j^*$ with $\xi_0 = 1_X$.
        \item Elements of $H_c$ exhibit the following form of decay of correlations (also known as weak-mixing behavior),
            \begin{displaymath}
                \lim_{T\to \infty}\frac{1}{T} \int_0^T \lvert C_{fg}(t)\rvert \, dt = 0, \quad \forall f \in H, \quad \forall g \in H_c,
            \end{displaymath}
            where $C_{fg}(t) = \langle f, U^tg\rangle_H$.
    \end{enumerate}
\end{thm}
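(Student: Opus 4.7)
My plan is to prove the theorem by combining the spectral theorem for the skew-adjoint generator $V$ (whose existence is guaranteed by Stone's theorem) with two structural facts about the Koopman evolution: ergodicity of $\mu$ and the $^*$-homomorphism property of $U^t$ on $L^\infty(\mu)$. The first will give simplicity and group structure of the point spectrum, while the second, via Wiener's theorem, will yield the correlation decay on $H_c$.

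First I would set $H_p$ to be the closed subspace of $H$ spanned by all eigenvectors of $V$ and $H_c := H_p^\perp$. Since $U^t = e^{tV}$ commutes with the spectral projections of $V$, both subspaces are $U^t$-invariant, and the spectral theorem implies that the restriction of the spectral measure of $V$ to $H_c$ is continuous (atom-free). For part (i), the key observation is that any eigenvector $\xi$ with $V\xi = i\omega\xi$ satisfies $|\xi \circ \Phi^t| = |\xi|$ in $L^2(\mu)$, so $|\xi|$ is $U^t$-invariant and hence $\mu$-a.e.\ constant by ergodicity. Simplicity then follows: if $\xi_1,\xi_2$ share the eigenvalue $i\omega$, the ratio $\xi_1/\xi_2$ (well-defined since $\xi_2$ has constant modulus, hence is nonzero a.e.) is $U^t$-invariant, so again constant by ergodicity. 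To obtain the group structure of $\sigma_p(V)$, I would use the $^*$-homomorphism property: if $V\xi = i\omega\xi$ and $V\eta = i\omega'\eta$, both eigenfunctions lie in $L^\infty(\mu)$ (constant modulus), so $\xi\eta \in L^\infty(\mu) \subseteq H$ and $U^t(\xi\eta) = e^{i(\omega+\omega')t}\xi\eta$, showing $\xi\eta$ is an eigenfunction with eigenvalue $i(\omega+\omega')$; similarly $\bar\xi$ corresponds to $-i\omega$. Separability of $H$ forces $\sigma_p(V)$ to be countable, so I can enumerate eigenvalues as $\{i\omega_j\}_{j\in\mathbb Z}$ with $\omega_{-j}=-\omega_j$ and $\omega_0 = 0$, and normalize the basis so that $\xi_{-j} = \xi_j^*$; the eigenfunction at $\omega_0 = 0$ is $U^t$-invariant, hence constant a.e.\ by ergodicity, giving $\xi_0 = 1_X$.

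For part (ii), I would apply the spectral theorem to write
\begin{displaymath}
    C_{fg}(t) = \langle f, U^t g\rangle_H = \int_{\mathbb R} e^{i\omega t}\, d\mu_{fg}(\omega),
\end{displaymath}
where $\mu_{fg}$ is the complex spectral measure associated with the pair $(f,g)$. The total variation of $\mu_{fg}$ is bounded by $\lVert f\rVert\,\lVert g\rVert$, and for $g \in H_c$ the measure $\mu_{fg}$ is continuous (no atoms) since the spectral measure of $V$ restricted to $H_c$ has this property. Wiener's theorem then gives
\begin{displaymath}
    \lim_{T\to\infty}\frac{1}{T}\int_0^T \lvert C_{fg}(t)\rvert^2\, dt = \sum_{\omega \in \mathbb R} \lvert \mu_{fg}(\{\omega\})\rvert^2 = 0,
\end{displaymath}
and a Cauchy--Schwarz step upgrades this $L^2$-Cesàro decay to the $L^1$-Cesàro decay in the statement.

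The main obstacle I expect is the algebraic step showing $\sigma_p(V)$ is a group: one must verify that $\xi\eta$ not only lies in $H$ but is in $D(V)$ and satisfies the Leibniz-type relation, which is most cleanly handled by working at the level of the unitary group $U^t$ (where $U^t(\xi\eta) = (U^t\xi)(U^t\eta)$ follows from the $^*$-homomorphism property on $L^\infty$) and then reading off the eigenvalue by differentiation at $t=0$ using the constant-modulus bound that keeps everything in $H$. Modulo this care, all remaining steps are essentially bookkeeping around Stone's theorem, the spectral theorem, and Wiener's theorem.
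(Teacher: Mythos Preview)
Your proposal is correct and follows the standard argument for this classical result. Note, however, that the paper does not actually give a proof of this theorem: it is stated as a recalled ``fundamental result on unitary Koopman groups associated with measure-preserving ergodic flows'' with a reference to Halmos's book, so there is no in-paper proof to compare against. Your outline---constant modulus of eigenfunctions via ergodicity, simplicity via the ratio argument, group structure of $\sigma_p(V)$ from the multiplicativity of $U^t$ on $L^\infty(\mu)$, and Wiener's theorem for the Ces\`aro decay on $H_c$---is exactly the textbook route and would constitute a complete proof.
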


By \cref{thm:ergodic_splitting}(i), every element $f \in H_p$ evolves as an observable of a pure point spectrum system,
\begin{equation}
    \label{eq:pure_point_spec_evo}
    U^t f = \sum_j e^{i\omega_jt} \langle \xi_j, f\rangle_H \xi_j,
\end{equation}
and is thus amenable to quantum simulation via the techniques described in \cref{sec:pure_point_spec}. On the other hand, it is not obvious how to apply these methods to observables in $H_c$, for this space does not admit an orthonormal basis consisting of generator eigenfunctions. To overcome this obstacle, we will approximate the generator $V$ by a family of skew-adjoint operators that are diagonalizable on the entire Hilbert space $H$, and whose eigenfunctions behave as approximate Koopman eigenfunctions that we will use for building rotation systems approximating the dynamical flow $\Phi^t$.

\subsubsection{Spectrally accurate approximations}

The development of analytical and computational techniques for spectral approximation of Koopman and transfer operators of measure-preserving systems has been a highly active research area in recent years; e.g., \cites{OttoRowley21,Colbrook24} and references therein. In this work, we employ a variant of the techniques developed in \cites{DasEtAl21,GiannakisValva24,GiannakisValva24b} that yield approximations of the generator that are spectrally accurate in the sense of strong convergence of resolvents.

\begin{defn}
    \label{def:src}A family of skew-adjoint operators $A_\tau \colon D(A_\tau) \to \mathbb H$, $\tau > 0$, on a Hilbert space $\mathbb H$ is said to converge to a skew-adjoint operator $A\colon D(A) \to \mathbb H$ in strong resolvent sense if for some (and thus, every) $z \in \mathbb C \setminus i \mathbb R$ the resolvents $R(z, A_\tau)$ converge strongly to $R(z, A)$; that is, $\lim_{\tau\to 0^+} \lVert (R(z, A_\tau) - R(z, A)) f \rVert_{\mathbb H} = 0$ for every $f\in \mathbb H$.
\end{defn}

It can be shown that strong resolvent convergence $A_\tau \to A$ is equivalent to strong convergence of the corresponding unitaries, $e^{t A_\tau} \to e^{t A}$ for every $t \in \mathbb R$ (also known as strong dynamical convergence); e.g., \cite{Oliveira09}*{Proposition~10.1.8}. For our purposes, this implies that if a family of skew-adjoint operators converges to the Koopman generator $V$ in strong resolvent sense, the unitary evolution groups generated by these operators consistently approximate the Koopman operators $U^t = e^{t V}$ generated by $V$.

Strong resolvent convergence and strong dynamical convergence imply the following form of spectral convergence; \cite{DasEtAl21}*{Proposition~13}.

\begin{thm}
    With the notation of \cref{def:src}, let $\tilde E\colon \mathcal B(i \mathbb R) \to B(\mathbb H)$ and $\tilde E_\tau \colon \mathcal B(i \mathbb R) \to B(\mathbb H)$ be the spectral measures of $A$ and $A_\tau$, respectively, i.e., $A = \int_{i \mathbb R} \lambda\, d\tilde E(\lambda)$ and $A_\tau = \int_{i \mathbb R} \lambda\, d\tilde E_\tau(\lambda)$. Then, the following hold under strong resolvent convergence of $A_\tau$ to $A$.
\begin{enumerate}[(i)]
        \item For every element $\lambda \in \sigma(A)$ of the spectrum of $A$, there exists a sequence $\tau_1, \tau_2, \ldots \searrow 0$ and elements $\lambda_n \in \sigma(A_{\tau_n})$ of the spectra of $A_{\tau_n}$  such that $\lim_{n\to \infty} \lambda_n = \lambda$.
        \item For every bounded continuous function $h\colon i \mathbb R \to \mathbb C$, as $\tau\to 0^+$ the operators $h(A_\tau) = \int_{i \mathbb R} h(\lambda)\,d\tilde E_\tau(\lambda)$ converge strongly to $h(A) = \int_{i \mathbb R} h(\lambda) \,d\tilde E(\lambda)$.
        \item For every bounded Borel-measurable set $\Theta \in \mathcal B(i \mathbb R)$ such that $\tilde E(\partial \Theta) = 0$ (i.e., the boundary of $\Theta$ does not contain eigenvalues of $A_\tau$), as $\tau\to 0^+$ the projections $\tilde E_\tau(\Theta)$ converge strongly to $\tilde E(\Theta)$.
    \end{enumerate}
    \label{thm:spec-conv}
\end{thm}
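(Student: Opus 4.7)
The plan is to establish (ii) first by a standard functional calculus argument, and then deduce (i) and (iii) as corollaries. For (ii), let $\mathcal D$ denote the family of bounded Borel functions $h\colon i\mathbb R \to \mathbb C$ such that $h(A_\tau) \to h(A)$ strongly as $\tau \to 0^+$. I first check that $\mathcal D$ is a unital $^*$-subalgebra of $\ell^\infty(i\mathbb R)$ closed under uniform-norm limits: closure under sums is immediate; closure under products uses the telescoping bound $\lVert (B_\tau C_\tau - B C) f\rVert \leq \lVert (B_\tau - B) C f\rVert + \lVert B_\tau\rVert \cdot \lVert (C_\tau - C) f\rVert$ combined with the uniform bound $\lVert h(A_\tau)\rVert \leq \lVert h\rVert_\infty$; closure under conjugation uses that $\overline{(z-\lambda)^{-1}} = -(-\bar z - \lambda)^{-1}$ for $\lambda \in i\mathbb R$, which is, up to sign, a resolvent at the point $-\bar z \in \mathbb C \setminus i\mathbb R$; and closure under uniform limits follows from the same norm bound. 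Since $\mathcal D$ contains the resolvent functions by hypothesis, and these separate points and vanish at infinity on $i\mathbb R$, the Stone--Weierstrass theorem forces $\mathcal D \supseteq C_0(i\mathbb R)$.

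Extending from $C_0$ to $C_b$ is the delicate step. I would use the cutoff $g_R(\lambda) = R^2/(R^2 - \lambda^2) \in C_0(i\mathbb R)$, which satisfies $0 \leq g_R \leq 1$, $g_R \to 1$ pointwise as $R \to \infty$, and equals $R^2 R(R,\cdot) R(-R,\cdot)$ in the functional calculus. Given $h \in C_b$ and $f \in \mathbb H$, I split
\begin{displaymath}
h(A_\tau) f - h(A) f = \bigl[(h g_R)(A_\tau) - (h g_R)(A)\bigr] f + h(A_\tau)(1-g_R)(A_\tau) f - h(A)(1-g_R)(A) f.
\end{displaymath}
Choosing $R$ large, dominated convergence for the spectral measure of $A$ makes $\lVert (1-g_R)(A) f\rVert$ small; the $C_0$ case applied to $1 - g_R$ then makes the middle term small (uniformly controlled by $\lVert h\rVert_\infty$), and the first difference goes to zero by the $C_0$ case applied to $h g_R$. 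This establishes (ii). For (i), I argue by contradiction: if some $\lambda \in \sigma(A)$ had an open neighborhood $U \subset i\mathbb R$ disjoint from $\sigma(A_\tau)$ for all sufficiently small $\tau$, I would pick a bump $h \in C_c(U)$ with $h(\lambda) = 1$. Then $h(A_\tau) = 0$ identically for small $\tau$, while $h(A) \neq 0$ because $\tilde E$ charges every open neighborhood of a spectral point; this contradicts (ii). For (iii), (ii) combined with polarization yields weak convergence of the scalar spectral measures $\langle f, \tilde E_\tau(\cdot) g\rangle \to \langle f, \tilde E(\cdot) g\rangle$ on $i\mathbb R$; the hypothesis $\tilde E(\partial \Theta) = 0$ together with the portmanteau theorem then gives weak operator convergence $\tilde E_\tau(\Theta) \to \tilde E(\Theta)$. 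I upgrade this to strong convergence by expanding
\begin{displaymath}
\lVert \tilde E_\tau(\Theta) f - \tilde E(\Theta) f\rVert^2 = \langle f, \tilde E_\tau(\Theta) f\rangle - 2\Real\langle f, \tilde E_\tau(\Theta) \tilde E(\Theta) f\rangle + \lVert \tilde E(\Theta) f\rVert^2,
\end{displaymath}
using the projection identity $P^2 = P$, and observing that both variable terms tend to $\lVert \tilde E(\Theta) f\rVert^2$ (the second by applying portmanteau to the complex measure $\langle f, \tilde E_\tau(\cdot) \tilde E(\Theta) f\rangle$).

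The main obstacle is Step 2 above, because the spectral measures $\langle f, \tilde E_\tau(\cdot) f\rangle$ are not \emph{a priori} uniformly tight in $\tau$, so one cannot trivially cut off $h$ at infinity. The resolvent cutoff $g_R$ is crucial: it lies in $C_0$ (hence covered by Step 1 for every fixed $R$), which reduces tightness to a double-limit interchange handled by the standard $\varepsilon/3$ argument. A secondary subtlety arises in (iii), where the portmanteau step yields only weak operator convergence; upgrading to strong convergence uses in an essential way that both $\tilde E_\tau(\Theta)$ and $\tilde E(\Theta)$ are orthogonal projections, so that convergence of diagonal quadratic forms is enough to force norm convergence.
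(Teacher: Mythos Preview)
The paper does not supply a proof of this theorem; it is quoted from \cite{DasEtAl21}*{Proposition~13} as background. Your argument is a correct, self-contained proof along standard lines. Two cosmetic points: the identity for the cutoff should read $g_R = -R^2\,R(R,\cdot)\,R(-R,\cdot)$ (a harmless sign), and when you write ``the $C_0$ case applied to $1-g_R$'' you really mean the $C_0$ case applied to $g_R$, which yields $(1-g_R)(A_\tau)f \to (1-g_R)(A)f$ and hence smallness of the middle term; the function $1-g_R$ itself is not in $C_0$. Neither point affects the validity of the proof.
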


\subsubsection{Markov smoothing operators}
\label{sec:markov_operators}

In broad terms, the schemes of \cites{DasEtAl21,GiannakisValva24,GiannakisValva24b} regularize the generator or its resolvent by composing it with Markov smoothing operators with associated RKHSs of continuous, complex-valued functions on the state space $X$.

These RKHSs, $\mathcal H_\tau$, are defined for each $\tau>0$ to have a bounded, continuous, strictly positive-definite kernel $k_\tau \colon X \times X \to \mathbb R_{>0}$, built such that $k_\tau\rvert_{M \times M}$ is $C^1$. By standard results on RKHS theory (e.g., \cites{SteinwartChristmann08,PaulsenRaghupathi16}), $K_\tau \colon H \to \mathcal H_\tau$ with
\begin{displaymath}
    K_\tau f = \int_X k_\tau(\cdot, x) f(x) \, d\mu(x)
\end{displaymath}
is a well-defined compact integral operator. Moreover, we have that (i) the range of $K_\tau$ is a dense subspace of $\mathcal H_\tau(X_\mu)$; (ii) $K_\tau^*: \mathcal H_\tau \to H$ implements the inclusion map (that is, $K_\tau^*\rvert_{\mathcal H_\tau(M)} = \iota \rvert_{\mathcal H_\tau(M)}$); and (iii) $\mathcal H_\tau(M)$ is a subspace of $C^1(M)$. In particular, the action of the generator on elements of $\mathcal H_\tau$ can be evaluated via~\eqref{eq:generator_c1}.

Defining $G_\tau = K_\tau^* K_\tau$ for $\tau>0$ and $G_0$ as the identity operator on $H$, we require that $\{ G_\tau \}_{\tau\geq 0}$ is a strongly-continuous semigroup of strictly positive, Markov operators. This means:
\begin{enumerate}[label=(K\arabic*)]
    \item \label[prty]{prty:k1} $\langle f, G_\tau f\rangle_H > 0$ whenever $f \neq 0$ (i.e., $G_\tau$ is strictly positive as an operator in $B(H)$).
    \item \label[prty]{prty:k2} $G_\tau 1_X = 1_X$ and $G_\tau f \geq 0$ whenever $g \geq 0$ (i.e., $G_\tau$ is a Markov operator).
    \item \label[prty]{prty:k3} $G_\tau \circ G_{\tau'} = G_{\tau+\tau'}$ for every $\tau,\tau' \geq 0$ (i.e., $ \{ G_\tau \}_{\tau\geq 0}$ is a semigroup).
    \item \label[prty]{prty:k4} $\lim_{\tau \to 0^+} G_\tau f = f$ for every $f \in H$ (i.e., $ \{ G_\tau \}_{\tau\geq 0}$ is strongly continuous).
\end{enumerate}
It can also be shown that $K_\tau$ admits the polar decomposition
\begin{equation}
    \label{eq:polar_decomp}
    K_\tau = T_\tau G_{\tau/2},
\end{equation}
where $T_\tau: H \to \mathcal H_\tau$ is an isometry with range $\mathcal H_\tau(X_\mu)$. In addition, we have
\begin{equation}
    \label{eq:ran_k_tau}
    \ran K_{\tau/2} = \mathcal H_\tau(X_\mu),
\end{equation}
so we can realize the subspaces $\mathcal H_{\tau}(X_\mu) \subseteq \mathcal H_{\tau} $ by smoothing elements of $H$ by the kernel integral operators $\mathcal H_\tau$.

Possible ways of constructing kernels $k_\tau$ satisfying \crefrange{prty:k1}{prty:k4} include normalization of Gaussian kernels on $X=\mathbb R^n$ and Fourier transform of positive functions on the dual group $\widehat{X}$ when $X$ is a compact abelian group and the invariant measure $\mu$ is a Haar measure (cf.\ $\lambda$ from \cref{sec:pure_point_spec}). In \cref{app:markov} we give an outline of the Markov normalization approach used in the numerical experiments of \cref{sec:experiments}. Examples of Markov kernel constructions on compact abelian groups via weight functions can be found, e.g., in \cites{DasGiannakis23,GiannakisMontgomery24,GiannakisEtAl24}.

\subsubsection{Diagonalizable approximations of the generator}

Using the smoothing operators $G_\tau$, the papers \cites{DasEtAl21,GiannakisValva24} builds 1-parameter families of densely-defined operators $V_\tau \colon D(V_\tau) \to H$ with $z,\tau >0$ and $D(V_\tau) \subseteq H$ with the following properties.

\begin{enumerate}[label=(V\arabic*)]
    \item \label[prty]{prty:v1} $V_\tau$ is skew-adjoint.
    \item \label[prty]{prty:v2} $V_\tau$ is compact \cite{DasEtAl21} or has compact resolvent \cite{GiannakisValva24}.
    \item \label[prty]{prty:v3} $V_\tau$ is real, $(V_\tau f)^* = V_\tau(f^*)$ for all $f\in D(V)$.
    \item \label[prty]{prty:v4} $V_\tau$ annihilates constant functions, $V_\tau \bm 1 = 0$.
    \item \label[prty]{prty:v5} $V_\tau$ converges to $V$ in strong resolvent sense as $\tau \to 0^+$.
\end{enumerate}

In \cite{GiannakisValva24b} a related approach is developed whereby $V$ is approximated by a 2-parameter family of operators $V_{z,\tau}$ satisfying \crefrange{prty:v1}{prty:v4} and converging to $V$ in strong resolvent sense in the iterated limits $z \to 0^+$ after $\tau \to 0^+$. A key feature of this approach is that it is ``physics-informed'' in the sense of using known equations of motion to evaluate the action of the generator by means~\eqref{eq:generator_c1}. We review this method in \cref{app:spectral_approx} as it will be used in our numerical examples.

Here, as a concrete example we mention the (formulaically simpler) approach of \cite{DasEtAl21}, who define the compact approximations
\begin{displaymath}
    V_\tau = G_{\tau/2} V G_{\tau/2}.
\end{displaymath}
These operators are manifestly skew-adjoint. Their compactness follows by compactness of $G_{\tau/2}$, boundedness of $\vec V \cdot \nabla$ as an operator from $C^1(M)$ to $C(M)$, and the relation
\begin{displaymath}
    V G_{\tau/2} = \iota \circ (\vec V \cdot \nabla) \circ K_{\tau/2}.
\end{displaymath}
Unless stated otherwise, in what follows we will use the abbreviated notation $V_\tau$ to denote any of the operators $V_\tau$ or $V_{z,\tau}$.

\subsubsection{Approximate Koopman eigenfunctions}

For every $\tau>0$, $V_\tau$ is unitarily equivalent to a skew-adjoint operator $W_\tau \colon D(W_\tau) \to \mathcal H_\tau$ with domain $D(W_\tau) = T_\tau(D(V_\tau)) \cup \ker T_\tau^*$ and range contained in $\mathcal H_\tau(X_\mu)$, defined as
\begin{displaymath}
    W_\tau = T_\tau V_\tau T_\tau^*.
\end{displaymath}
Both $V_\tau$ and $W_\tau$ are unitarily diagonalizable, and there exist orthonormal bases $ \{ \xi_{j,\tau} \}$ and $ \{ \zeta_{j,\tau} \}$ of $H$ and $\mathcal H_\tau(X_\mu)$, respectively, consisting of their eigenvectors,
\begin{equation}
    \label{eq:approx_koopman_eigs}
    V_\tau \xi_{j,\tau} = i \omega_{j,\tau} \xi_{j,\tau}, \quad W_\tau \zeta_{j,\tau} = i \omega_{j,\tau} \zeta_{j,\tau}, \quad \zeta_{j,\tau} = T_\tau \xi_{j,\tau}
\end{equation}
where $\omega_{j,\tau} \in \mathbb R$ are corresponding eigenfrequencies. By analogy of eigenvectors/eigenfrequencies of pure-point-spectrum systems in \cref{thm:ergodic_splitting}(i), the $\omega_{j,\tau}, \xi_{j,\tau}, \zeta_{j,\tau}$ can be indexed using indices $j \in \mathbb Z$ such that $\omega_{-j,\tau} = - \omega_{j,\tau}$, with the the corresponding eigenvectors chosen such that $\xi_{-j,\tau} = \xi_{j,\tau}^*$, $\zeta_{j,\tau} = \zeta_{j,\tau}^*$, and $\xi_{0,\tau} = \zeta_{j,\tau} = 1_X$. Moreover, the eigenvalue $i\omega_{0,\tau} = 0$ is simple. Following \cite{GiannakisValva24b}, we order the eigenfunctions $\zeta_{0,\tau}, \zeta_{\pm 1, \tau}, \zeta_{\pm 2,\tau} $ in increasing order of a Dirichlet energy functional; see \cref{app:spectral_approx} for more details.

On the basis of \cref{prty:v5} and \cref{thm:spec-conv}, we interpret $\xi_{j,\tau}$/$\zeta_{j,\tau}$ and $\omega_{j,\tau}$ as approximate Koopman eigenfunctions and eigenfrequencies, respectively. The corresponding unitaries satisfy (cf.~\eqref{eq:pure_point_spec_evo})
\begin{displaymath}
    e^{t V_\tau} = \sum_j e^{i\omega_{j,\tau}t} \langle \xi_{j,\tau}, \cdot \rangle_H \xi_{j,\tau}, \quad e^{t W_\tau} = \sum_j e^{i\omega_{j,\tau}t} \langle \zeta_{j,\tau}, \cdot \rangle_{\mathcal H_\tau} \zeta_{j,\tau} + \proj_{\mathcal H_\tau(X_\mu)^\perp},
\end{displaymath}
and $e^{t V_\tau} f$ converges as $\tau \to 0^+$ to $U^tf$ for every $f \in H$ by \cref{prty:v5} and equivalence of strong resolvent convergence and strong dynamical convergence. It can also be shown \cite{GiannakisEtAl24}*{Lemma~6} that for every $f \in \mathcal H_{\tau_0}$, the evolution under $e^{t W_\tau}$, $\tau \leq \tau_0$, converges to the true Koopman evolution in $L^2$ sense,
\begin{equation}
    \label{eq:rkhs_conv}
    \lim_{\tau\to 0^+} \left\lVert (K_\tau^* e^{t W_\tau} - U^t \iota)f \right\rVert_H =0.
\end{equation}
For the remainder of the paper we will use the notation $U^t_\tau := e^{t W_\tau}$.

\subsection{Reproducing kernel Hilbert algebras}
\label{sec:rkha}

RKHAs are RKHSs equipped with coalgebra structure that will be central to the Fock space scheme studied in this paper. In this subsection, we give basic definitions and outline some of properties of RKHAs that are most relevant to our work, referring the reader to \cite{GiannakisMontgomery24} for further details.

\begin{defn}
    \label{def:rkha}
    An RKHS $\mathcal H$ on a set $X$ with reproducing kernel $k \colon X \times X \to \mathbb C$ is a \emph{reproducing kernel Hilbert algebra (RKHA)} if $k_x \mapsto k_x \otimes k_x$, $x \in X$, extends to a bounded linear map (comultiplication) $\Delta\colon \mathcal H \to \mathcal H \otimes \mathcal H$.
\end{defn}

Since
\begin{displaymath}
    \langle k_x, \Delta^*(f \otimes g)\rangle_{\mathcal H} = \langle \Delta k_x, f \otimes g \rangle_{\mathcal H} = \langle k_x \otimes k_x, f \otimes g\rangle_{\mathcal H \otimes \mathcal H} = \langle k_x, f\rangle_{\mathcal H} \langle k_x, g\rangle_{\mathcal H} = f(x)g(x),
\end{displaymath}
it follows that $\Delta^*$ is a bounded linear map that implements pointwise multiplication. As a result, $\mathcal H$ is simultaneously a Hilbert function space and commutative algebra with respect to pointwise function multiplication. It can further be shown that $\mathcal H$ is a Banach algebra,
\begin{displaymath}
    \vertiii{ f g}_{\mathcal H} \leq \vertiii{ f}_{\mathcal H} \vertiii{ g }_{\mathcal H}, \quad \forall f, g \in \mathcal H,
\end{displaymath}
for an operator norm $\vertiii{ \cdot }_{\mathcal H}$ induced by the multiplier representation $\pi \colon \mathcal H \to B(\mathcal H)$ (defined analogously to~\eqref{eq:mult_rep}) that generates a coarser topology than the Hilbert space norm. By commutativity and associativity of pointwise multiplication, it follows that the comultiplication operator $\Delta : \mathcal H \to \mathcal H \otimes \mathcal H$ is cocommutative and coassociative, which implies in turn that $\Delta_n \colon \mathcal H \to \mathcal H^{\otimes n}$ with
\begin{equation}
    \label{eq:delta_ampl}
    \Delta_1 = \Delta, \quad \Delta_n = (\Delta \otimes \Id^{\otimes (n-1)}) \Delta_{n-1} \quad \text{for $n>1$},
\end{equation}
is a well-defined amplification of $\Delta$ to the tensor product spaces $\mathcal H^{\otimes (n+1)}$ for any $n \in \mathbb N$.

If the reproducing kernel $k$ is real-valued, $\mathcal H$ becomes a $^*$-algebra with isometric involution $^* \colon \mathcal H \to \mathcal H$ given by the pointwise conjugation of functions. If $\mathcal H$ contains the constant function $1_X \colon X \to \mathbb R$ that equals 1 everywhere on $X$, then it is a unital algebra with unit $1_X$.

Given an RKHA $\mathcal H$, we will let $\sigma(\mathcal H) \subset \mathcal H^*$ denote its spectrum as a Banach algebra, i.e., the set of nonzero multiplicative linear functionals $\chi: \mathcal H \to \mathbb C$,
\begin{displaymath}
    \chi(fg) = (\chi f)(\chi g), \quad \forall f,g \in \mathcal H,
\end{displaymath}
equipped with the weak-$^*$ topology of $\mathcal H^*$. The dual object to $\sigma(\mathcal H)$ is the cospectrum $\sigma_\text{co}(\mathcal H)$, which is defined as
\begin{displaymath}
    \sigma_\text{co}(\mathcal H) = \left\{ \xi \in \mathcal H : \langle \xi, \cdot \rangle_{\mathcal H} \in \sigma(\mathcal H) \right\}
\end{displaymath}
and is equipped with the weak topology of $\mathcal H$. Equivalently, we have that $\sigma_\text{co}(\mathcal H)$ is the subset of $\mathcal H$ consisting of elements $\xi$ such that $\Delta \xi = \xi \otimes \xi$.

It is clear that the kernel sections $k_x$ and evaluation functionals $\delta_x = \langle k_x, \cdot\rangle_{\mathcal H}$ are elements of $\sigma_\text{co}(\mathcal H)$ and $\sigma(\mathcal H)$, respectively, for every $x \in X$. In particular, the feature map $\varphi$ from~\eqref{eq:feature_map} can be viewed as a map from $X$ into the cospectrum $\sigma_\text{co}(\mathcal H)$. This map is injective whenever $\mathcal H$ has linearly independent kernel sections, and if $X$ is a topological space it is continuous iff $\mathcal H \subseteq C(X)$. If $\mathcal H$ is unital, $\sigma(\mathcal H)$ and $\sigma_\text{co}(\mathcal H)$ are compact Hausdorff spaces.

The paper \cite{GiannakisMontgomery24} developed constructions that can be used to build many examples of RKHAs. These examples include the spaces $\mathfrak A$ on $X=\mathbb T^d$ introduced in \cref{sec:pure_point_spec} which are unital RKHAs of continuous functions with homeomorphic spectra and cospectra to $X$. More generally, for every compact subset $X \subset \mathbb R^n$ there exists a unital RKHA $\mathcal H \subset C(\mathbb R^n)$ with $ X \cong \sigma(\mathcal H) \cong \sigma_\text{co}(\mathcal H)$.

\subsection{Tensor network approximation}
\label{sec:tensor_network}

As mentioned in \cref{sec:pure_point_spec}, a key structural property of skew-adjoint generators of unitary Koopman groups is that they act as derivations on algebras of observables via the Leibniz rule~\eqref{eq:leibniz}. While this property can considerably aid the efficiency of quantum simulation algorithms, aside from special cases (e.g., a priori known rotation systems as in \cite{GiannakisEtAl22}), one is compelled to work with regularizations of the generator that fail to satisfy the Leibniz rule. Examples include the methods of \cites{DasEtAl21,GiannakisValva24,GiannakisValva24b} outlined in \cref{sec:spectral_approx}, as well several other methods for discrete approximation of the typically continuous spectral measures exhibited by unitary Koopman groups under complex measure-preserving dynamics; e.g., \cites{DasEtAl21,ColbrookTownsend24}.

To our knowledge, there is currently no operator approximation methodology for unitary Koopman groups associated with measure-preserving flows that simultaneously preserves skew-adjointness and the Leibniz rule for the generator (but note the recent MultDMD technique \cite{BoulleColbrook24} that yields non-unitary approximations of the Koopman operator that preserve multiplicativity). In the transfer operator literature, a powerful approach for spectral analysis of hyperbolic dynamics is to work in anisotropic Banach spaces adapted to the stable/unstable directions of the dynamics, where the transfer operator is quasicompact (and thus exhibits isolated eigenvalues in the unit disc that are separated from an essential spectrum) \cites{BlankEtAl02,BaladiTsujii07,ButterleyLiverani07}. In general, however, these spaces lack the Hilbert space structure and unitary dynamics required of quantum systems.

\subsubsection{Dilation to Fock space}

As an effort to overcome these challenges, \cite{GiannakisEtAl24} developed a dilation scheme that lifts a given regularized generator to a Fock space generated by an RKHA, in which the Leibniz rule is recovered. In more detail, they consider a family of approximations $V_\tau \colon D(V_\tau) \to H$ of the generator satisfying \crefrange{prty:v1}{prty:v5} and the convergence property \eqref{eq:rkhs_conv} for the associated operators $W_\tau \colon D(W_\tau) \to \mathcal H_\tau$ acting on the RKHSs $\mathcal H_\tau \subseteq C(X)$.

Assuming that $X$ is a compact Hausdorff space, the Hilbert spaces $\mathcal H_\tau$ are built as unital RKHAs $\mathcal H_\tau$ with isomorphic (co)spectrum to $X$. Each RKHA $\mathcal H_\tau$ in this family generates an associated Fock space, $F(\mathcal H_\tau)$, defined using standard constructions from many-body quantum theory as the Hilbert space closure of the tensor algebra $T(\mathcal H_\tau) := \mathbb C \oplus \mathcal H_\tau \oplus \mathcal H_\tau^{\otimes 2} \oplus \ldots$ with respect to the inner product $\langle \cdot, \cdot \rangle_{F(\mathcal H_\tau)}$ satisfying
\begin{equation}
    \label{eq:fock_innerp}
    \begin{gathered}
        \langle a, b \rangle_{F(\mathcal H_\tau)} = \bar a b, \quad a, b \in \mathbb C, \\
        \langle a, f \rangle_{F(\mathcal H_\tau)} = 0, \quad a \in \mathbb C, \quad f \in \mathcal H_\tau, \\
        \langle f_1 \otimes \cdots \otimes f_n, g_1 \otimes \cdots \otimes g_n \rangle_{F(\mathcal H_\tau)} = \prod_{i=1}^n \langle f_i, g_i \rangle_{\mathcal H_\tau}, \quad f_i, g_i \in \mathcal H_\tau.
    \end{gathered}
\end{equation}

The scheme of \cite{GiannakisEtAl24} lifts the regularized generator $W_\tau$ to a skew-adjoint operator $\tilde W_\tau \colon D(W_\tau) \to F(\mathcal H_\tau) $, defined by linear extension of
\begin{align*}
    \tilde W_\tau(f_1 \otimes f_2 \otimes \cdots \otimes f_n)
    &= (W_\tau f_1) \otimes f_2 \otimes \cdots \otimes f_n \\
    & + f_1 \otimes (W_\tau f_2) \otimes \cdots \otimes f_n \\
    & + \ldots \\
    & + f_1 \otimes f_2 \otimes \cdots \otimes (W_\tau f_n)
\end{align*}
for $f_1, \ldots, f_n \in D(W_\tau)$. By construction, $\tilde W_\tau$ satisfies the Leibniz rule
\begin{displaymath}
    \tilde W_\tau(f \otimes g) = (\tilde W_\tau f) \otimes g + f \otimes (\tilde W_\tau g)
\end{displaymath}
with respect to the tensor product for all $f, g \in D(\tilde W_\tau)$ such that the left- and right-hand sides of the above equation are well-defined. As a result, we have:
\begin{enumerate}
    \item The point spectrum $\sigma_p(\tilde W_\tau)$ is an abelian group generated by $\sigma_p(W_\tau)$.
    \item $\tilde W_\tau$ generates a 1-parameter group of unitary operators $\tilde U^t_\tau = e^{t \tilde W_\tau}$, $ t \in \mathbb R$, that act multiplicatively with respect to the tensor product,
        \begin{displaymath}
            \tilde U^t_\tau(f \otimes g) = (\tilde U^t_\tau f) \otimes g + f \otimes (\tilde U^t_\tau g), \quad \forall, f, g \in F(\mathcal H_\tau).
        \end{displaymath}
\end{enumerate}

Putting together the above, it follows that for every vector $q \in L^2(\mu)$ with a representative $\xi \in \mathcal H_{\tau_0}$ for some $\tau_0>0$ and a multiplicative decomposition of the form $\xi = \xi_1 \ldots \xi_n$ for some $\xi_1, \ldots, \xi_n \in \mathcal H_{\tau_0}$, we have
\begin{equation}
    \label{eq:xi_tensor_evo}
    \Delta_n^* \tilde U^t_\tau \xi = \Delta_n^* \left( \bigotimes_{i=1}^n U^t_\tau \xi_i \right) = \prod_{i=1}^n U^t_\tau \xi_i \xrightarrow{\tau \to 0^+} U^t q
\end{equation}
in $L^2(\mu)$ norm, where $\Delta_n$ is the amplified comultiplication operator defined in~\eqref{eq:delta_ampl}. Intuitively, the Fock space $F(\mathcal H_\tau)$ generated by the RKHA $\mathcal H_\tau$ allows one to ``distribute'' the Koopman evolution of observables over tensor products in the Fock space for potentially arbitrarily high grading $n \in \mathbb N$.

\subsubsection{Quantum representation of statistical evolution}

The approach of \cite{GiannakisEtAl24} leverages~\eqref{eq:xi_tensor_evo} to build an approximation of the classical expectation $\mathbb E_p (U^t f) \equiv \int_X (U^t f) p \, d\mu$ for an observable $f \in L^\infty(\mu)$ and a probability density $p \in L^1(\mu)$. To any desired error tolerance, the square root $\sqrt p$ may be approximated by an element $q \in L^2(\mu)$ with a strictly positive representative $\xi \in \mathcal H_{\tau_0}$. The latter may be written in turn as  $\xi = (\xi^{1/n})^n$, $n \in \mathbb N$ (using the holomorphic functional calculus on $\mathcal H_{\tau_0}$ to compute the $n$-th root $\xi^{1/n} \in \mathcal H_\tau$), and dynamically evolved using the unitaries $\tilde U^t_\tau$ on the Fock space as in~\eqref{eq:xi_tensor_evo}.

To cast this in the form of a quantum state evolution, we map $\xi$ to a pure quantum state $\rho_\tau = \langle \eta_\tau, \cdot \rangle_{F(\mathcal H_\tau)} \eta_\tau \in Q(F(\mathcal H_\tau))$, $\tau \leq \tau_0$, with state vector
\begin{displaymath}
    \eta_\tau = s_1 \frac{\xi}{\lVert \xi\rVert_{\mathcal H_\tau}} + s_2 \frac{\xi^{1/2} \otimes \xi^{1/2}}{\lVert \xi^{1/2}\rVert^2_{\mathcal H_\tau}} + \ldots \in F(\mathcal H_\tau).
\end{displaymath}
Here, $s_1, s_2, \ldots$ is an arbitrary sequence of strictly positive numbers such that $\sum_{i=1}^\infty s_i^2 = 1$.

On the side of the observable, we first smooth $f$ to an element $f_\tau = K_\tau f \in \mathcal H_\tau$ where $K_\tau$ is the kernel integral operator induced by the reproducing kernel of $\mathcal H_\tau$,
\begin{displaymath}
    K_\tau f = \int_X k_\tau(\cdot, x)\, d\mu(x).
\end{displaymath}
Then, for $n \in \mathbb N$ we map the multiplication operator $M_{f_\tau} \in B(\mathcal H_\tau)$ (which is a bounded quantum observable by virtue of the RKHA structure of $\mathcal H_\tau$, as in \cref{sec:quantum_embedding_pure_point_spec}) to a quantum observable $A_{f, \tau, n} \in B(F(\mathcal H_\tau))$ on the Fock space given by the following amplification of $M_{f_\tau}$:
\begin{displaymath}
    A_{f,\tau,n} = \Delta_n M_{f_\tau} \Delta_n^*.
\end{displaymath}
Proposition~9 in \cite{GiannakisEtAl24} then shows that $\mathbb E_p (U^t f)$ is approximated to arbitrarily high accuracy as $\tau \to 0^+$ by the normalized quantum expectation
\begin{equation}
    \label{eq:tensor_approx}
    f^{(t)}_{\tau,n} = \frac{\mathbb E_{\rho_\tau}(\bm{\tilde U}^t_\tau A_{f,\tau,n})}{\mathbb E_{\rho_\tau}(\bm{\tilde U}^t_\tau A_{1_X,\tau,n})},
\end{equation}
where $\bm{\tilde U}^t_\tau \colon B(F(\mathcal H_\tau)) \to B(F(\mathcal H_\tau))$ is the adjoint operator acting on quantum observables on the Fock space via $\bm{\tilde U}^t_\tau A = \tilde U^t_\tau A \tilde U^{t*}_\tau$.

\subsubsection{Finite-rank approximation}

An interesting aspect of the approximation~\eqref{eq:tensor_approx} is that it converges as $\tau \to 0^+$ for any $n \in \mathbb N$. We can take advantage of this fact by projecting the state vector $\eta_{\tau}$ to a Fock subspace $F(\mathcal Z_{\tau,d}) \subset F(\mathcal H_\tau)$ generated by a finite-dimensional subspace $\mathcal Z_{\tau,d} \subset \mathcal H_\tau$ of dimension $2d + 1$, spanned by $d$ linearly independent eigenfunctions of $W_\tau$ with nonzero corresponding eigenvalue, their complex conjugates, and the constant eigenfunction $1_X$ such that $W_\tau 1_X = 0$. Even though $F(\mathcal Z_{\tau,d})$ is infinite-dimensional, the fact that the range of $A_{f,\tau,n}$ lies in the subspace $\mathcal H_\tau^{\otimes n}\subset F(\mathcal H_\tau)$ of fixed grading $n$ means that the quantum expectation of $\bm{\tilde U}^t_\tau A_{f,\tau,n}$ appearing in~\eqref{eq:tensor_approx} with respect to the projected state onto $F(\mathcal Z_{\tau,d})$ is equal to the expectation of a finite-rank operator  $\bm{\tilde U}^t_\tau A_{f,\tau,n,d}$ with respect to $\rho_\tau$, where $\ran A_{f,\tau,n,d} \subseteq \mathcal Z_{\tau,d}^{\otimes n} \subset F(\mathcal Z_{\tau,d})$.

Letting $f_{\tau,n,d}^{(t)} \in \mathbb C$ denote the analog of $f_{\tau,n}^{(t)}$ resulting from this approximation,
\begin{equation}
    \label{eq:tensor_approx_d}
    f^{(t)}_{\tau,n,d} = \frac{\mathbb E_{\rho_\tau}(\bm{\tilde U}^t_\tau A_{f,\tau,n,d})}{\mathbb E_{\rho_\tau}(\bm{\tilde U}^t_\tau A_{1_X,\tau,n,d})},
\end{equation}
it was shown \cite{GiannakisEtAl24}*{Theorem~12} that $f_{\tau,n,d}^{(t)}$ converges to $f_{\tau,n}^{(t)}$ as $n \to \infty$ for \emph{fixed} sufficiently large $d \in \mathbb N$.

Intuitively, this result signifies that due to the multiplicative nature of the underlying Koopman operator $U^t$, the correspondence between tensor products and pointwise products induced by the RKHA structure of $\mathcal H_\tau$, and the strong convergence $U^t_\tau \to U^t$ as $\tau \to 0^+$, the quantum evolution $t \mapsto \bm{\tilde U}^t_\tau A_{f,\tau,n,d}$ taking place in subspaces of $F(\mathcal Z_{\tau,d})$ of increasingly large dimension $(2d+1)^n$ captures sufficient information about the true Koopman evolution $t \mapsto U^t f$ so as to recover it fully as $\tau \to 0^+$ and $n\to \infty$ (at appropriate rates), even if $d$ is held fixed to a finite value.

Practically, this allows one to devote resources to compute a collection of approximate Koopman eigenfunctions of modest size $2d +1$, and algebraically amplify them to access tensor product approximation spaces $\mathcal Z_{\tau,d}^{\otimes n}$ of dimension $(2d + 1)^n$.

In \cite{GiannakisEtAl24}, it was shown that~\eqref{eq:tensor_approx_d} can be efficiently evaluated by a tensor network with tree structure, alleviating the cost of brute-force computations in $\mathcal Z_{\tau,d}^{\otimes n}$ that increases exponentially with $n$. Numerical experiments with low-dimensional dynamical systems (including the Stepanoff flow example studied in this paper) found significant improvement in prediction skill for data-driven implementations of the tensor network approach over conventional models based on linear combinations of Koopman eigenfunctions.

%###############################################################
\section{Second quantization framework}
\label{sec:overview}

Our second-quantization framework approximates the measure-preserving dynamical system introduced in \cref{sec:dynamical_system} by a rotation system on a family of tori embedded in the spectrum of a commutative Banach algebra, built as a weighted symmetric Fock space. In this section, we describe the main steps of our approach, using as a starting point an approximation of the skew-adjoint generator $V\colon D(V) \to H$ by a family of skew-adjoint, diagonalizable operators $V_\tau\colon D(V_\tau) \to H$ and their RKHS counterparts $W_\tau \colon D(W_\tau) \to \mathcal H_\tau$ satisfying \crefrange{prty:v1}{prty:v5} and~\eqref{eq:rkhs_conv}. As noted in \cref{sec:spectral_approx}, the numerical results in this paper are based in the method of \cite{GiannakisValva24b} (summarized in \cref{app:markov,app:spectral_approx}), but any other technique satisfying these properties may also be employed.

Similarly to the tensor network approximation scheme \cite{GiannakisEtAl24} outlined in \cref{sec:tensor_network}, to recover the Leibniz rule lost through regularization, we construct an amplification $\tilde W_\tau \colon D(\tilde W_\tau) \to \fk$ of $W_\tau$ to a Fock space, $\fk$, generated by $\mathcal H_\tau$ on which it acts as an algebra derivation. Two major differences between our approach and the spaces $F(\mathcal H_\tau)$ from \cite{GiannakisEtAl24} are that (i) $\fk$ is based on the symmetric tensor algebra of $\mathcal H_\tau$; and (ii) Hilbert space closure is taken with respect to a weighted inner product. As a result, $\fk$ becomes a Banach algebra with respect to the symmetric tensor product that is isomorphic to an RKHA of continuous functions on its spectrum. In addition, unlike \cite{GiannakisEtAl24}, we do not require that the $\mathcal H_\tau$ are RKHAs. This affords the second-quantization approach presented in this paper greater flexibility. The Fock space $\fk$, generator $\tilde W_\tau$, and related RKHAs to $\fk$ will be the foci of \crefrange{sec:overview_fock_space}{sec:fock_space_amplification}.

Next, in \cref{sec:top_models}, we employ the unitary evolution group generated by  $\tilde W_\tau$ to build a rotation system on the Banach algebra spectrum $\sigma(\fk)$ that will serve as an approximate topological model of the unitary Koopman evolution of observables generated by $V$. In \cref{sec:embedding_overview,sec:finite_dimensional_overview}, we couple this model with a procedure for embedding  observables of the original system into $\fk$ and its associated RKHAs. The outcome of this construction is an asymptotically consistent approximation of the unitary Koopman evolution of observables under potentially mixing dynamics by trigonometric (Fourier) polynomials of arbitrarily large degree on suitably chosen tori. A key aspect of this approximation is that it captures information from products of approximate Koopman eigenfunctions (the eigenfunctions of $W_\tau$). In addition, the approximation is positivity preserving.

\subsection{Weighted Fock space}
\label{sec:overview_fock_space}

The space $\fk$ is constructed as the closure of the symmetric tensor algebra $T^\vee(\mathcal H_\tau) := \mathbb C \oplus \mathcal H_\tau \oplus \mathcal H_\tau^{\vee 2} \oplus \ldots$ with respect to the inner product satisfying (cf.\ \eqref{eq:fock_innerp})
\begin{equation}
    \label{eq:weighted_sym_fock_innerp}
    \begin{gathered}
        \langle a, b \rangle_\fk = \bar a b, \quad a, b \in \mathbb C, \\
        \langle a, f \rangle_\fk = 0, \quad a \in \mathbb C, \quad f \in \mathcal H_\tau, \\
        \langle f_1 \vee \cdots \vee f_n, g_1 \vee \cdots \vee g_n \rangle_{\fk} = \frac{w^2(n)}{n!^2} \sum_{\sigma,\sigma'\in S_n} \prod_{i=1}^n \langle f_{\sigma(i)}, g_{\sigma'(i)} \rangle_{\mathcal H_\tau}, \quad f_i, g_i \in \mathcal H_\tau,
    \end{gathered}
\end{equation}
for a strictly positive weight function $w \colon \mathbb N_0 \to \mathbb R_{>0}$. Here, $\vee$ denotes the symmetric tensor product, defined as the average
\begin{displaymath}
    f_1 \vee \dots \vee f_n = \frac{1}{n!} \sum_{\sigma \in S_n} f_{\sigma(1)} \otimes \dots \otimes f_{\sigma(n)}, \quad f_i \in \mathcal H_\tau,
\end{displaymath}
over the $n$-element permutation group $S_n$, and $\mathcal H_\tau^{\vee n}$ is the closed subspace of $\mathcal H_\tau^{\otimes n}$ consisting of symmetric tensors. The map $f_1 \otimes \cdots \otimes f_n \mapsto f_1 \vee \cdots \vee f_n$ defines, by linear extension, the orthogonal projection from $\mathcal H_\tau^{\otimes n}$ to $\mathcal H_\tau^{\vee n}$.  We will use $\Omega \equiv 1 \in \mathbb C \subset \fk$ to denote the ``vacuum'' vector of the Fock space. By convention, we will always choose $w$ such that $w(0) = 1$.

In \cref{thm:quotient}, we show that if $w^{-2}$ is summable and subconvolutive,
\begin{equation}
    \label{eq:w_subonv}
    w^{-2} \in \ell^1(\mathbb N_0), \quad w^{-2} * w^{-2}(n) \leq C w^{-2}(n),
\end{equation}
$\fk$ becomes a unital Banach algebra with respect to the symmetric tensor product for a norm $\vertiii{\cdot}_{\fk}$ equivalent to the Hilbert space norm,
\begin{displaymath}
    \vertiii{f \vee g}_{\fk} \leq \vertiii{f}_{\fk} \vertiii{g}_{\fk}, \quad \forall f, g \in \fk,
\end{displaymath}
and with $\Omega$ as the unit. Moreover, associated with $\fk$ is a coproduct, i.e., a bounded operator $\Delta \colon \fk \to \fk \otimes \fk$ such that
\begin{displaymath}
    \Delta^*(f \otimes g) = f \vee g.
\end{displaymath}
Among many possible constructions, in this paper we use weights from the subexponential family
\begin{displaymath}
    w(n) = e^{\sigma n^p}, \quad \sigma > 0, \quad p \in (0, 1),
\end{displaymath}
which is a prototypical class of subconvolutive weights used in harmonic analysis on locally compact abelian groups (e.g., \cites{Feichtinger79,Grochenig07}).

Boundedness of $\Delta$ allows us to characterize the spectrum of $\fk$ (i.e., the set of nonzero multiplicative functionals $\chi\colon \fk \to \mathbb C$) as the set
\begin{displaymath}
    \sigma(\fk) = \left\{ \chi = \langle \xi, \cdot \rangle_{\fk}: \xi = \sum_{n=0}^\infty w^{-2}(n) \eta^{\vee n}: \eta \in (\mathcal H_\tau)_{R_w} \right\} \subset \fk^*,
\end{displaymath}
where $R_w$ is the radius of convergence of the series $\sum_{n=1}^\infty w^{-2}(n) z^n$, $z \in \mathbb C$; see \cref{prop:rkha_spec}. Since $w \in \ell^2(\mathbb N)$, we have $ R_w \geq 1 $ and the set of admissible vectors $\eta$ in the definition above includes the unit ball of $\mathcal H_\tau$.

The dual object to $\sigma(\fk)$ is the cospectrum of $\fk$, which is defined as
\begin{displaymath}
    \sigma_\text{co}(\fk) = \left\{ \xi \in \fk : \langle \xi, \cdot \rangle_{\fk} \in \sigma(\fk) \right\} \subset \fk.
\end{displaymath}
Equivalently, we have that $\sigma_\text{co}(\fk)$ is the subset of $\fk$ consisting of elements $\xi$ such that $\Delta \xi = \xi \otimes \xi$. We equip $\sigma(\fk)$ and $\sigma_\text{co}(\fk)$ with the weak-$^*$ topology on $\fk^*$ and the weak topology on $\fk$, respectively. With these topologies, they become compact Hausdorff spaces.

Next, let $\varphi \colon X \to \mathcal H_\tau$ be a continuous, bounded feature map. Assuming boundedness of $k_\tau$ (as per \cref{sec:markov_operators}), a prototypical example is the canonical feature map of $\mathcal H_\tau$, $\varphi(x) = k_\tau(x,\cdot)$. We will distinguish this feature map using the notation $\varphi_\tau(x) \equiv k_\tau(x,\cdot)$, but in what follows we will also consider more general examples $\varphi$. The weighted Fock space $\fk$ has an associated feature map $\tilde\varphi\colon X \to \fk$, where
\begin{equation}
    \label{eq:feature_map_fock}
    \tilde\varphi (x) = \sum_{n=0}^\infty \frac{w^{-2}(n)}{\varpi^n} \varphi(x)^{\vee n},
\end{equation}
and $\varpi$ is a constant chosen such that $\varpi \geq \sup_{x \in X} \lVert \varphi(x)\rVert_{\mathcal H_\tau}$. The range of $\tilde\varphi$ then lies in the cospectrum $\sigma_\text{co}(\fk)$, which implies that $\hat \varphi_\tau \colon X \to \sigma(\fk)$ with
\begin{equation}
    \label{eq:feature_map_rkha}
    \hat\varphi_\tau(x) = \langle \tilde\varphi(x), \cdot\rangle_{\fk}
\end{equation}
is a well-defined map of state space into the spectrum of the weighted Fock space. We include a $\tau$ subscript in our notation for $\hat\varphi_\tau$ to emphasize the fact this map depends canonically on the inner product of $\fk$. The choice of the RKHS family $\mathcal H_\tau$ and feature maps $\varphi$ in our examples will ensure that $\hat\varphi_\tau$ is injective on the support of the invariant measure.

Next, using the convention $f^{\vee 0} = \Omega$, the $\{ \zeta_{j,\tau} \}_{j \in \mathbb Z}$ eigenbasis of $\mathcal H_\tau(X_\mu)$ from \eqref{eq:approx_koopman_eigs} induces an orthonormal basis of $F_w(\mathcal H_\tau(X_\mu)) \subseteq \fk$ consisting of elements
\begin{equation}
    \label{eq:zeta_fock}
    \zeta_{\tau}^{\vee J} = \frac{\zeta_{0,\tau}^{\vee j_0} \vee \zeta_{-1,\tau}^{\vee j_{-1}} \vee \zeta_{1,\tau}^{\vee j_1} \vee \zeta_{-2,\tau}^{\vee j_{-2}} \vee \zeta_{2,\tau}^{\vee j_2} \vee \cdots}{w(\lVert J \rVert)\sqrt{j_0! \, j_{-1}! \, j_1! \, j_{-2}! \, j_{2!} \cdots}}.
\end{equation}
Here, $J = (j_0, j_{-1}, j_1, j_{-2}, j_2, \ldots)$ is a multi-index of non-negative integers $j_i$, finitely many of which are nonzero, and $\lVert J \rVert = \sum_{j_i \in j} j_i$. Henceforth, we will use the symbol $\mathbb J$ to denote the set of such multi-indices.

In second-quantization terminology, the basis $ \{ \zeta_\tau^{\vee J} \}_{J \in \mathbb J}$ is referred to as an occupation number basis. Intuitively, we think of each basis vector $\zeta_{i,\tau}$ as being associated with a ``particle'' of a certain type. A density operator $\rho= \langle \zeta_\tau^{\vee J }, \cdot \rangle_{\fk} \zeta_\tau^{\vee J} \in Q(\fk)$ then induces a pure quantum state (see \cref{sec:pure_point_spec}) comprising of $j_0$ particles of type $\zeta_{\tau,0}$, $j_{-1}$ particles of type $\zeta_{\tau,-1}$, and so on, for a total number of $\lVert J\rVert$ particles.

\subsection{Reproducing kernel Hilbert algebras}
\label{sec:overview_rkha}

The weighted Fock space $\fk$ has two associated unital RKHAs: One, denoted as $\hfk$, is a space of continuous functions on the spectrum $\sigma(\fk)$, and another one, denoted as $\tilde{\mathcal H}_\tau$, is a space of continuous functions on the state space $X$.

The space $\hfk$ is built using the Gelfand transform $\Gamma \colon \fk \to C(\sigma(\fk))$, defined as $(\Gamma f)\chi = \chi f$. We show, by \cref{cor:rkha} below, that the image $\hfk \subseteq C(\sigma(\fk))$ of $\fk$ under $\Gamma$ has RKHA structure for  the reproducing kernel $\hat k_\tau \colon \sigma(\fk) \times \sigma(\fk) \to \mathbb C$,
\begin{displaymath}
    \hat k_\tau(\chi_1, \chi_2) = \langle \xi_1, \xi_2 \rangle_{\fk}, \quad \chi_i = \langle \xi_i, \cdot \rangle_{\fk},
\end{displaymath}
and the Gelfand map $\Gamma$ acts as a Banach algebra isomorphism. Using the Gelfand map, we obtain an orthonormal basis $ \{ \Gamma \zeta_\tau^{\vee J} \}_{J \in \mathbb J}$ of $\hfk$ from the basis elements $\zeta_\tau^{\vee J}$ in~\eqref{eq:zeta_fock}.

Next, $\tilde{\mathcal H}_\tau$ is given by the pullback of $\hfk$ onto $X$ under the feature map~\eqref{eq:feature_map_rkha}, i.e., $\tilde{\mathcal H}_\tau = \{ f \circ \hat \varphi_\tau: f \in \hfk \} \subset C(X)$. One then readily verifies that $\tilde{\mathcal H}_\tau$ has the reproducing kernel $\tilde k_\tau \colon X \times X \to \mathbb R_{>0}$, where
\begin{displaymath}
    \tilde k_\tau(x, y) = \sum_{n=0}^\infty \frac{w^{-2}(n)}{\varpi^n} \langle \varphi(x), \varphi(y)\rangle^n_{\mathcal H_\tau}.
\end{displaymath}
Moreover, the maps $\hat \pi \colon \hfk \to \tilde{\mathcal H}_\tau$ and $\tilde \pi \colon \fk \to \tilde{\mathcal H}_\tau$ defined by $\hat \pi f = f \circ \hat \varphi_\tau$ and $\tilde \pi = \hat \pi \circ \Gamma$ are algebra homomorphisms,
\begin{displaymath}
    \hat \pi(fg) = (\hat\pi f)(\hat \pi g), \quad \tilde \pi(f \vee g) = (\tilde \pi f) (\tilde \pi g).
\end{displaymath}
We think of an element on the weighted Fock space in the preimage $\tilde \pi^{-1}(f)$ as lying ``above'' observable $f \in \tilde{\mathcal H}_\tau$. We can also interpret $\hat f \in \hat \pi^{-1}(f)$ as an extension of $f$ from $X \cong \hat\varphi_\tau(X) \subset \sigma(\fk)$ to $\sigma(\fk)$.

Similarly to $\mathcal H_\tau$, the RKHAs $\tilde{\mathcal H}_\tau$ have associated integral operators $\tilde K_\tau \colon H \to \tilde{\mathcal H}_\tau$, where
\begin{displaymath}
    \tilde K_\tau f = \int_X \tilde k_\tau(\cdot, x) f(x) \, d\mu(x),
\end{displaymath}
and $\tilde K_\tau^*$ implements the inclusion map from $\tilde{\mathcal H}_\tau$ into $H$.

Henceforth, we will assume that the feature vector $\varphi(x)$ is a real-valued element of $\mathcal H_\tau$ for every $x \in X$. Since the reproducing kernel of $\mathcal H_\tau$ is also real-valued, this implies that $\tilde k_\tau$ is real-valued and $\tilde{\mathcal H}_\tau$ is closed under complex conjugation of functions.

\subsection{Spectral tori}
\label{sec:spectral_tori}

Let $\mathbb A_w$ be the subset of $\ell^2(\mathbb N_0)$ consisting of vectors $a = (a_j)_j$ with norm $\lVert a\rVert_{\ell^2(\mathbb N)} \leq R_w$ and non-negative elements $a_j$. The structure of the weighted Fock spaces and RKHAs introduced above can be further characterized by defining, for each $a \in \mathbb A_w$ and each sequence $z = (z_j)_j \in \ell^\infty(\mathbb N)$ with unimodular elements $z_j$, the vectors
\begin{equation}
    \label{eq:xi_cospec}
    \xi_{\tau,a,z} = \sum_{n=0}^\infty w^{-2}(n) \left(a_0 +\sum_{j=1}^\infty a_j \frac{z_j \zeta_{j,\tau} + \overline{z_j \zeta_{j,\tau}}}{\sqrt 2}\right)^{\vee n} \in \sigma_\text{co}(\fk),
\end{equation}
and the subsets $\mathbb T_{\tau, a}$ of the spectrum $\sigma(\fk)$ as
\begin{displaymath}
    \mathbb T_{\tau, a} = \left\{ \chi_{\tau,a,z} \equiv \langle \xi_{\tau,a,z}, \cdot\rangle_{\fk}: z = (z_j)_j \in \ell^\infty(\mathbb N), \; z_j \in \mathbb T^1 \subset \mathbb C \right\}.
\end{displaymath}
Each set $\mathbb T_{\tau, a}$ has the topology of a torus of dimension equal to the number of nonzero elements of $(a_1, a_2, \ldots)$. In what follows, $\mathbb S_\tau = \bigcup_{a \in \mathbb A_w} \mathbb T_{\tau, a} \subset \sigma(\fk)$ will be the (disjoint) union of these tori.

For $x \in X$, let $a_\tau(x) = (a_{0,\tau}(x), a_{1,\tau}(x), \ldots) \in \mathbb A_w$ and $z_\tau(x) = (z_{\tau,1}(x), z_{\tau,2}(x), \ldots) \in \ell^\infty(\mathbb N)$, where
\begin{equation}
    \label{eq:feat_a}
    a_{0,\tau} = \frac{1}{\varpi} \lvert \langle \zeta_{j,\tau}, \varphi(x)\rangle_{\mathcal H_\tau}\rvert, \quad a_{j,\tau}(x) = \frac{\sqrt 2}{\varpi} \lvert \langle \zeta_{j,\tau}, \varphi(x) \rangle_{\mathcal H_\tau}\rvert, \quad z_{j,\tau}(x) = e^{i \arg \langle \zeta_{j,\tau}, \varphi(x)\rangle_{\mathcal H_\tau}},
\end{equation}
for $j \in \mathbb N$. Observe that since $ \{ \zeta_{j, \tau} \}_{j \in \mathbb Z}$ is an orthonormal basis of $\mathcal H_\tau(X_\mu)$ and $\varphi(x)$ is real-valued, we have
\begin{equation}
    \label{eq:feature_vec}
    \frac{\varphi(x)}{\varpi} = a_{0,\tau}(x) + \sum_{j=1}^\infty a_{j,\tau}(x) \frac{z_{j,\tau}(x)\zeta_{j,\tau} + \overline{z_{j,\tau}(x)\zeta_{j,\tau}}}{\sqrt 2}, \quad \forall x \in X_\mu,
\end{equation}
and $\lVert a_\tau(x)\rVert_{\ell^2(\mathbb N_0)} = \lVert\varphi(x)\rVert_{\mathcal H_\tau} / \varpi \leq 1$. Comparing the above expression with~\eqref{eq:xi_cospec}, it follows that if $x$ lies in the support of the invariant measure, the feature vector $\varphi(x)$ is equal to $\xi_{\tau,a_\tau(x),z_\tau(x)}$, and thus that $\hat\varphi_\tau(x)$ is a point in the torus $\mathbb T_{\tau, a_\tau(x)}$ with coordinates $z_\tau(x)$. Henceforth, we will use the abbreviated notation $\mathbb T_{\tau, x} \equiv \mathbb T_{\tau, a_\tau(x)}$ for $x \in X$. Letting $\hat X_{\mu,\tau} = \hat\varphi_\tau(X_\mu) \subseteq \sigma(\fk)$ be the image of the support of the invariant measure in the Fock space spectrum under $\hat\varphi_\tau$, we have that $\hat X_{\mu,\tau}$ is a subset of $\mathbb S_\tau$.

Suppose now that feature map $\varphi$ is injective. Then, $\varphi\rvert_M$ has a continuous inverse on the forward-invariant compact set $M \subseteq X$. As a result, $\hat\varphi_\tau\rvert_M$ has a continuous inverse on the image $\hat M_\tau = \hat\varphi_\tau(M) \subseteq \sigma(\fk)$ in the Fock space spectrum, and $\hat\varphi_\tau(x) \mapsto \hat\varphi_\tau(\Phi^t(x))$ defines a continuous (semi)flow $\hat \Phi_\tau^t \colon \hat M_\tau \to \hat M_\tau$. This flow is topologically conjugate to $\Phi^t$ in the sense of the following commuting diagram involving continuous maps:
\begin{displaymath}
    \begin{tikzcd}
        M \ar[r,"\Phi^t"] \ar[d,"\hat \varphi_\tau",swap] & M \ar[d,"\hat \varphi_\tau"] \\
        \hat M_\tau \ar[r,"\hat\Phi^t_\tau"] & \hat M_\tau
    \end{tikzcd}
\end{displaymath}

\subsection{Fock space amplification}
\label{sec:fock_space_amplification}

For $\tau>0$ we define the strongly continuous, one-parameter evolution group $ \{ \tilde U^t_\tau\colon \fk \to \fk \}_{t\in \mathbb R}$ by linear extension of
\begin{displaymath}
    \tilde U^t_\tau (f_1 \vee \cdots \vee f_n) = (U^t_\tau f_1) \vee \cdots \vee (U^t_\tau f_n), \quad f_i \in \mathcal H_\tau.
\end{displaymath}
The generator of this group is a skew-adjoint operator  $\tilde W_\tau \colon D(\tilde W_\tau) \to \fk$ satisfying
\begin{displaymath}
    \tilde W_\tau(f_1 \vee \cdots \vee f_n) = (W_\tau f_1) \vee f_2 \cdots \vee f_n + \ldots + f_1 \vee \cdots \vee f_{n-1} \vee (W_\tau f_n)
\end{displaymath}
for $f_1, \ldots, f_n \in D(W_\tau)$. Thus, $\tilde U^t_\tau$ acts multiplicatively on $\fk$, and $\tilde W_\tau$ satisfies the Leibniz rule on a suitable subspace. By virtue of the Leibniz rule, $\tilde W_\tau$ admits the eigendecomposition
\begin{displaymath}
    \tilde W_\tau \zeta_\tau^{\vee J} = i\tilde\omega_{J,\tau} \zeta_\tau^{\vee J}, \quad \tilde\omega_{J,\tau} = \sum_{i=-\infty}^\infty j_i \omega_{i,\tau}, \quad J = (j_0, j_{-1}, j_1, j_{-2}, j_2, \ldots) \in \mathbb J,
\end{displaymath}
and the point spectrum $\sigma_p(\tilde W_\tau)$ has the structure of a subgroup of $i \mathbb R$. Thus, lifting $U^t_\tau$ into the Fock space recovers key structural properties of one-parameter unitary Koopman groups that were lost through regularization.

\subsection{Topological models of regularized Koopman evolution}
\label{sec:top_models}

The unitary operators $U^t_\tau$ induce a flow $R_\tau^t \colon \sigma(\fk) \to \sigma(\fk)$, $t \in \mathbb R$, where $R^t_\tau(\chi) = \chi \circ \tilde U^t_\tau$. Each torus $\mathbb T_{\tau, a}$ is an invariant set under $R_\tau^t$. On these sets, $R^t_\tau$ takes the form of a rotation system generated  by the eigenfrequencies $\omega_{j,\tau}$:
\begin{displaymath}
    R_\tau^t(\chi_{\tau,a,z}) = \chi_{\tau,a, z^{(t)}},
\end{displaymath}
where $z = (z_j)_{j \in \mathbb N}$ and $z^{(t)} = (e^{-i \omega_{j,t}} z_j)_{j \in \mathbb N}$. Equivalently, we have
\begin{displaymath}
    R_\tau^t(\langle \xi_{\tau,a,z}, \cdot\rangle_\fk) = \langle \tilde U^{-t}_\tau\xi_{\tau,a,z}, \cdot\rangle_\fk,
\end{displaymath}
so the vector $\xi_{\tau,a,z}$ evolves under the adjoint (``Perron--Frobenius'') operators $\tilde U^{t*}_\tau = \tilde U^{-t}_\tau$.

The rotation system $R_\tau^t$ constitutes a topological model of the regularized Koopman dynamics $U^t_\tau$ as a rotation system on the spectrum of the weighted Fock space $\fk$. This is non-trivial since $U^t_\tau$ is not a composition operator induced by a flow on the original state space $X$. Since $\hat X_\tau$ is a subset of $\mathbb S_\tau \subset \sigma(\fk)$ (and $\mathbb S_\tau$ is invariant under $R^t_\tau$), the union of tori $\mathbb S_\tau$ provides a common topological setting for studying the dynamical system associated with the regularized Koopman operators $U^t_\tau$ (represented by $R_\tau^t$) in relation to the original dynamical system $\Phi^t$ (represented by $\hat\Phi^t_\tau$).

Letting $\hat U^t_\tau \colon C(\sigma(\fk)) \to C(\sigma(\fk))$, $\hat U^t_\tau f = f \circ R^t_\tau$, be the Koopman operator on continuous functions induced by $R^t_\tau$, we have
\begin{displaymath}
    \Gamma \circ \tilde U^t_\tau = \hat U^t_\tau \circ \Gamma.
\end{displaymath}
As a result, $\hfk$ is a $\hat U^t_\tau$-invariant subspace of $C(\sigma(\fk))$.

Our approach is to employ the rotation system $R^t_\tau$ in conjunction with an embedding of observables in $H$ into the weighted Fock space $\fk$ and associated RKHA $\hfk$ to build approximations of the unitary Koopman evolution of observables under $U^t$. Since the RKHS $\tilde{\mathcal H}_\tau$ spans a dense subspace of $H$ and for a general observable $f \in \mathcal H_\tau \subset \tilde{\mathcal H}_\tau$ there exist multiple elements $\tilde f \in \fk$ of the Fock space lying above it (i.e., $\tilde\pi \tilde f = f$) there are different possible strategies for representing observables in $H$ by elements of $\fk$ or, equivalently, $\hfk$.

One such strategy is to employ a continuous lifting map $\mathcal L \colon \mathcal H_\tau \to \fk$ satisfying
\begin{equation}
    \label{eq:lifting}
    \tilde \pi \circ \mathcal L f = f.
\end{equation}
Defining $\hat{\mathcal L} = \Gamma \circ \mathcal L$ then yields
\begin{displaymath}
    f^{(t)}_\tau = \hat \pi \circ \hat U^t_\tau \circ \hat{\mathcal L}
\end{displaymath}
as an approximation of the Koopman evolution $U^t f$ based on the rotation system on $\sigma(\fk)$. A basic choice in that direction is to choose $\mathcal L \colon \mathcal H_\tau \hookrightarrow \fk $ as the inclusion map. In \cref{sec:top_models_complex} we consider aspects of ``nonlinear'' approximations wherein $\mathcal L$ yields a representation of $f$ as a product--sum of functions, thus utilizing higher gradings of the Fock space.

Yet another approach is to relax the requirement that $\mathcal L$ yields an exact representation, and consider embeddings that satisfy~\eqref{eq:lifting} approximately while still utilizing higher gradings of the Fock space and converging to the true Koopman evolution in an appropriate asymptotic limit. The construction and analysis of such an approximation will be the focus of \cref{sec:embedding_overview,sec:finite_dimensional_overview} below.

\subsection{Fock space embedding of observables}
\label{sec:embedding_overview}

We use integral operators to represent $f \in H$ by elements of the RKHA $\hfk$, whose restrictions on finite-dimensional tori $\mathbb T_{\tau, a} \subset \mathbb S_\tau$ are polynomials of arbitrarily large degree $m \in \mathbb N$ of the coordinates $z_j$.

Let $\kappa: X \times X \to \mathbb R_{>0}$ be a strictly positive, bounded, continuous kernel function such that
\begin{equation}
    \label{eq:kappa_decay}
    \kappa(x, y) < \kappa(x, x), \quad \forall x, y \in X: \, x \neq y.
\end{equation}
For example, given a metric $d \colon X \times X \to \mathbb R$ that metrizes the topology of $X$, a prototypical kernel satisfying \eqref{eq:kappa_decay} is the radial Gaussian kernel,
\begin{equation}
    \label{eq:gaussian_kernel}
    \kappa(x,y) = \exp\left( - \frac{d^2(x,y)}{\varepsilon^2}\right), \quad \varepsilon > 0.
\end{equation}
For any such kernel $\kappa$ and $\tau > 0$, define the smoothed kernel $k_\tau \colon X \times X \to \mathbb R_{> 0}$, where $\kappa_\tau(\cdot, y) = K_\tau \iota \kappa(\cdot, y) \in \mathcal H_\tau$. Moreover, for $m \in \mathbb N$ define the integral operators $\mathcal K_{m,\tau} \colon H \to \fk$ and $\hat{\mathcal K}_{m,\tau} \colon H \to \hfk$, where
\begin{displaymath}
    \mathcal K_{m,\tau} f = \int_X \kappa_\tau^{\vee m}(\cdot, y) f(y) \, d\mu(y), \quad \hat{\mathcal K}_{m,\tau} f = \Gamma \mathcal K_{m,\tau} f = \int_X (\Gamma \kappa_\tau(\cdot, y))^m f(y) \, d\mu(y).
\end{displaymath}
Well-definition of these operators will be verified in \cref{sec:kappa_ops}. Note that for every $x \in X$ the pointwise power $y \mapsto (\kappa_\tau(x, y))^m$ lies in the RKHA $\tilde{\mathcal H}_\tau$ with reproducing kernel
\begin{displaymath}
    \tilde k_\tau(x, y) = \sum_{n=0}^\infty \frac{w^{-2}(n)}{\varpi_\tau^n} k_\tau(x, y)^n.
\end{displaymath}
Moreover, $\kappa_\tau^{\vee m}(\cdot, y) \in \fk$ lies above $\kappa_\tau(\cdot, y)^m \in \tilde{\mathcal H}_\tau$,
\begin{displaymath}
    \tilde \pi(\kappa_\tau^{\vee m}(\cdot, y)) = \kappa_\tau(\cdot, y)^m.
\end{displaymath}

We map observables $f \in H$ to elements of the RKHA $\hfk$ by means of the integral operators $\hat{\mathcal K}_{m,\tau}$; specifically, $\hat g_{m,\tau} := \hat{\mathcal K}_{m,\tau} \iota f$. We will also employ $\hat h_{m,\tau} := \hat{\mathcal K}_{m,\tau} 1_X$ for normalization purposes (cf.~\eqref{eq:tensor_approx}). The functions $\hat g_{m,\tau}$ and $\hat h_{m,\tau}$ evolve unitarily under the action of the Koopman operator $\hat U^t_\tau$ to
\begin{displaymath}
    \hat g^{(t)}_{m,\tau} := \hat U^t_\tau \hat g_{m,\tau}, \quad \hat h^{(t)}_{m,\tau} := \hat U^t_\tau \hat h_{m,\tau},
\end{displaymath}
respectively.

In order to render an approximation of the true Koopman evolution $U^t f$ using $\hat g^{(t)}_{m,\tau}$ and $\hat h^{(t)}_{m,\tau}$, we pull back these functions to state space $X$ by means of a feature map. Setting $\sigma > 0$ and $ \tau \in (0, \sigma/2]$, we let $\varphi_\sigma \colon X \to \mathcal H_\sigma$ be the canonical feature map into $\mathcal H_\sigma$,
\begin{displaymath}
    \varphi_\sigma(x) = k_\sigma(x, \cdot),
\end{displaymath}
and $\varphi^{(\mu)}_\sigma$ its projection onto the subspace $\mathcal H_\sigma(X_\mu) \subseteq \mathcal H_\sigma$,
\begin{displaymath}
    \varphi^{(\mu)}_\sigma = \proj_{\mathcal H_\sigma(X_\mu)} \circ \varphi_\sigma.
\end{displaymath}
We then define $\tilde\varphi^{(\mu)}_\sigma \colon X \to F_w(\mathcal H_\sigma)$ as (cf.~\eqref{eq:feature_map_fock})
\begin{displaymath}
    \tilde\varphi^{(\mu)}_\sigma (x) = \sum_{n=0}^\infty \frac{w^{-2}(n)}{\varpi_\sigma^{2n}} \varphi^{(\mu)}_\sigma(x)^{\vee n}, \quad \varpi_\sigma = \sup_{x \in X} \lVert \varphi_\sigma(x)\rVert_{\mathcal H_\sigma}.
\end{displaymath}
Since $\sigma > \tau$, we have $ F_w(\mathcal H_\sigma) \subset \fk $, so we can view $\tilde\varphi^{(\mu)}_\sigma$ as a feature map into $\fk$ that is based on the canonical feature vectors associated with $\mathcal H_\sigma(X_\mu) \subseteq \mathcal H_\sigma \subseteq \mathcal H_\tau$. In particular, for every $\sigma \geq \tau$ we have $\tilde\varphi^{(\mu)}_\sigma(x) \in \sigma_\text{co}(\fk)$ since $\varpi_\sigma = \lVert \varphi_\sigma(x)\rVert_{\mathcal H_\sigma} \geq \lVert \varphi_\sigma(x)\rVert_{\mathcal H_\tau}$ and $\lVert \varphi^{(\mu)}_\sigma(x)\rVert_{\mathcal H_\sigma} \leq \lVert \varphi_\sigma(x)\rVert_{\mathcal H_\sigma}$.

We therefore obtain a feature map $\hat\varphi^{(\mu)}_{\sigma,\tau} \colon X \to \sigma(\fk)$ mapping into the spectrum of $\fk$ via (cf.~\eqref{eq:feature_map_rkha})
\begin{displaymath}
    \hat\varphi^{(\mu)}_{\sigma,\tau}(x) = \langle \tilde\varphi^{(\mu)}_\sigma(x), \cdot\rangle_{\fk}.
\end{displaymath}
By construction, the image $\hat X^{(\mu)}_{\sigma,\tau} = \hat\varphi_{\sigma,\tau}(X)$ of state space under this feature map lies in in the union of tori $\mathbb S_\tau \subset \sigma(\fk)$, so we can dynamically evolve each point $\hat\varphi_{\sigma,\tau}(x)$ using the rotation system $R^t_\tau$ from \cref{sec:spectral_tori} restricted to $\mathbb S_\tau$.

\begin{rk}
    A reason for building the two-parameter family of feature maps $\hat\varphi^{(\mu)}_{\sigma,\tau}$ based on $\varphi^{(\mu)}_\sigma$ (as opposed to, say, the canonical feature maps $\varphi_\tau$ of $\mathcal H_\tau$) is to control the regularity of feature vectors as elements of $H$ when taking $\tau \to 0^+$ limits associated with Koopman operator approximation. Indeed, it follows from the semigroup \cref{prty:k3,prty:k4} and \eqref{eq:ran_k_tau} that for every $\tau \in (0, \sigma/2]$ and $x \in X$,
    \begin{equation}
        \label{eq:feat_sigma_tau}
        \varphi^{(\mu)}_\sigma(x) = K_\tau G_{\frac{\sigma}{2}-\tau} q_\sigma(x),
    \end{equation}
    for a continuous function $q_\sigma \colon X \to H$. The strong convergence~\eqref{eq:rkhs_conv} then implies
    \begin{equation}
        \label{eq:rkhs_conv_phi}
        \lim_{\tau\to 0^+} \left\lVert K_\tau^* U^t_\tau \varphi^{(\mu)}_\sigma(x) - U^t \iota \varphi^{(\mu)}_\sigma(x)\right\rVert_H = 0, \quad \forall x \in X.
    \end{equation}
    One further finds that
    \begin{equation}
        \label{eq:feat_sigma_tau_l2}
        q_\sigma(x) = K^*_{\sigma/2} \varphi_{\sigma/2}(x),
    \end{equation}
    which allows to relate $H$ inner products with $q_\sigma(x)$ to pointwise evaluation in the RKHS $\mathcal H_{\sigma/2}$,
    \begin{displaymath}
        \langle q_\sigma(x), f \rangle_H = \langle \varphi_{\sigma/2}, K_{\sigma/2}f\rangle_{\mathcal H_{\sigma/2}} = (K_{\sigma/2} f)(x), \quad \forall f \in H, \quad \forall x \in X.
    \end{displaymath}
\end{rk}

Next, let $\hat M_{\sigma,\tau}^{(\mu)} = \hat\varphi^{(\mu)}_{\sigma,\tau}(M) \subseteq \hat X^{(\mu)}_{\sigma,\tau}$ be the image of the compact set $M$ under the feature map $\hat\varphi^{(\mu)}_{\sigma,\tau}$. As we will show in \cref{sec:fock_consistency}, for sufficiently small $\tau$, $\hat h^{(t)}\rvert_{\hat M^{(\mu)}_{\sigma,\tau}}$ is bounded away from zero and thus has a multiplicative inverse in $C(\hat M^{(\mu)}_{\sigma,\tau})$. As a result,
\begin{equation}
    \label{eq:f_t_tau_m}
    \hat f_{m,\tau}^{(t)} := \frac{\hat g^{(t)}_{m,\tau}\rvert_{\hat M^{(\mu)}_{\sigma,\tau}}}{\hat h^{(t)}_{m,\tau}\rvert_{\hat M^{(\mu)}_{\sigma,\tau}}}, \quad f^{(t)}_{m,\sigma,\tau} := \hat f_{m,\tau}^{(t)} \circ \hat \varphi^{(\mu)}_{\sigma,\tau} \rvert_M \equiv \frac{\hat g^{(t)}_{m,\tau} \circ \hat\varphi^{(\mu)}_{\sigma,\tau}\rvert_M}{\hat h^{(t)}_{m,\tau} \circ \hat\varphi^{(\mu)}_{\sigma,\tau}\rvert_M}
\end{equation}
are well-defined continuous functions on $\hat M^{(\mu)}_{\sigma,\tau}$ and $M$, respectively. Our main approximation result is as follows.

\begin{thm}
    \label{thm:fock_consistency}
    With notation and assumptions as above, for every $f \in C(M)$, $f^{(t)}_{m,\sigma,\tau}$ converges to the true Koopman evolution $U^t \iota f$ in $H$ in the iterated limit of $m \to \infty$ after $\sigma\to 0^+$ after $\tau \to 0^+$,
    \begin{displaymath}
        \lim_{m\to\infty}\lim_{\sigma\to 0^+}\lim_{\tau\to 0^+} \left\lVert \iota f^{(t)}_{m,\sigma,\tau} - U^t \iota f\right\rVert_H = 0.
    \end{displaymath}
\end{thm}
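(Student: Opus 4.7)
The plan is to reduce the theorem to a classical approximate-identity concentration argument, by rewriting $f^{(t)}_{m,\sigma,\tau}(x)$ as a ratio of scalar integrals on state space and then taking the three iterated limits in sequence. Using the intertwining $\hat U^t_\tau \circ \Gamma = \Gamma \circ \tilde U^t_\tau$ and the multiplicative action of $\tilde U^t_\tau$ on symmetric tensor products, I would first express
\begin{displaymath}
    \hat g^{(t)}_{m,\tau} \circ \hat \varphi^{(\mu)}_{\sigma,\tau}(x) = \bigl\langle \tilde \varphi^{(\mu)}_\sigma(x),\ \tilde U^t_\tau \mathcal K_{m,\tau}\iota f\bigr\rangle_{\fk}.
\end{displaymath}
Since $\tilde U^t_\tau \mathcal K_{m,\tau}\iota f$ lies entirely in the grading-$m$ summand $\mathcal H_\tau^{\vee m} \subset \fk$, the pairing with $\tilde \varphi^{(\mu)}_\sigma(x)$ selects only the $n=m$ term, and the identity $\langle \eta^{\vee m}, \zeta^{\vee m}\rangle_{\fk} = w^2(m)\langle \eta, \zeta\rangle_{\mathcal H_\tau}^m$ from~\eqref{eq:weighted_sym_fock_innerp} cancels the weight exactly. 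This yields
\begin{displaymath}
    f^{(t)}_{m,\sigma,\tau}(x) = \frac{\int_X \alpha_{\sigma,\tau}(x, y, t)^m f(y)\, d\mu(y)}{\int_X \alpha_{\sigma,\tau}(x, y, t)^m\, d\mu(y)}, \quad \alpha_{\sigma,\tau}(x, y, t) := \langle \varphi^{(\mu)}_\sigma(x), U^t_\tau \kappa_\tau(\cdot, y)\rangle_{\mathcal H_\tau},
\end{displaymath}
so that the $\varpi_\sigma^{-2m}$ prefactors in numerator and denominator cancel.

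For the $\tau \to 0^+$ limit at fixed $m,\sigma$, I would transfer the pairing back into the fixed Hilbert space $H$ using \eqref{eq:feat_sigma_tau}, the polar decomposition $K_\tau = T_\tau G_{\tau/2}$, and the unitary equivalence $W_\tau = T_\tau V_\tau T_\tau^*$, obtaining
\begin{displaymath}
    \alpha_{\sigma,\tau}(x, y, t) = \bigl\langle G_{\sigma/2 - \tau} q_\sigma(x),\ G_{\tau/2} e^{tV_\tau} G_{\tau/2} \iota\kappa(\cdot, y)\bigr\rangle_H.
\end{displaymath}
Strong continuity of $\{G_\tau\}$ (\cref{prty:k3,prty:k4}) and strong dynamical convergence $e^{tV_\tau} \to U^t$ from \cref{prty:v5} and \cref{thm:spec-conv}, together with the contractivity $\lVert G_\tau\rVert, \lVert e^{tV_\tau}\rVert \leq 1$, give pointwise convergence $\alpha_{\sigma,\tau}(x,y,t) \to \alpha_\sigma(x,y,t) := \langle G_{\sigma/2} q_\sigma(x), U^t\iota\kappa(\cdot, y)\rangle_H$, with Cauchy--Schwarz supplying a $\tau$-uniform bound. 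Dominated convergence together with the already-stated positive lower bound on $\hat h^{(t)}_{m,\tau}\rvert_{\hat M^{(\mu)}_{\sigma,\tau}}$ then gives $\lim_{\tau\to 0^+} f^{(t)}_{m,\sigma,\tau}(x) = \int \alpha_\sigma^m f\, d\mu / \int \alpha_\sigma^m\, d\mu$. For the subsequent $\sigma \to 0^+$ step, \eqref{eq:feat_sigma_tau_l2} and self-adjointness of $G_{\sigma/2}$ give $\alpha_\sigma(x,y,t) = (K_{\sigma/2} G_{\sigma/2} U^t \iota\kappa(\cdot, y))(x)$, and the approximate-identity property of the Markov smoothing kernels on $C(X)$ yields $\alpha_\sigma(x,y,t) \to \kappa(\Phi^t(x), y)$ uniformly on $M \times X_\mu$; a second dominated-convergence argument produces
\begin{displaymath}
    \lim_{\sigma\to 0^+}\lim_{\tau\to 0^+} f^{(t)}_{m,\sigma,\tau}(x) = f^{(t)}_m(x) := \frac{\int_X \kappa(\Phi^t(x), y)^m f(y)\, d\mu(y)}{\int_X \kappa(\Phi^t(x), y)^m\, d\mu(y)}.
\end{displaymath}

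The outer $m \to \infty$ limit is a Laplace-type concentration argument: the strict diagonal maximum~\eqref{eq:kappa_decay}, continuity of $\kappa$ on the compact set $M \times X_\mu$, and forward invariance $\Phi^t(X_\mu) \subseteq X_\mu$ imply that for any open neighborhood $U \subseteq X_\mu$ of $\Phi^t(x)$, $\sup_{y \in X_\mu \setminus U}\kappa(\Phi^t(x), y) < \kappa(\Phi^t(x), \Phi^t(x))$, so the probability kernels $p_m(x, y) := \kappa(\Phi^t(x), y)^m / \int \kappa(\Phi^t(x), z)^m\, d\mu(z)$ concentrate exponentially at $\Phi^t(x)$ as $m \to \infty$. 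Continuity of $f \in C(M)$ then gives $f^{(t)}_m(x) \to f(\Phi^t(x)) = (U^t \iota f)(x)$ pointwise on $M$. Since $f^{(t)}_{m,\sigma,\tau}$ is a weighted average of $f$ once $\alpha_{\sigma,\tau}$ is close to the strictly positive $\kappa(\Phi^t(\cdot), \cdot)$, the uniform bound $\lvert f^{(t)}_{m,\sigma,\tau}\rvert \leq \lVert f\rVert_\infty$ together with the Dominated Convergence Theorem promotes pointwise convergence on $M \supseteq X_\mu$ to the claimed $L^2(\mu)$ convergence.

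The main obstacle is coordinating the $\tau \to 0^+$ and $\sigma \to 0^+$ limits uniformly enough to justify passing them inside both the $m$-th power and the integral. The inner product $\alpha_{\sigma,\tau}$ lives over RKHSs $\mathcal H_\tau$ whose inner products themselves depend on $\tau$, and both $\varphi^{(\mu)}_\sigma(x)$ and $\kappa_\tau(\cdot, y)$ vary with $\tau$. The rewriting in terms of fixed $H$-inner products via $K_\tau^* U^t_\tau K_\tau = G_{\tau/2} e^{tV_\tau} G_{\tau/2}$ is the key manoeuvre that unblocks the strong resolvent/dynamical convergence machinery; the remaining technical burden is to secure a positive lower bound on the denominator that is uniform in small $\tau, \sigma$ and in $x \in M$, legitimizing the division and the dominated-convergence steps throughout.
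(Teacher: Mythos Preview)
Your approach is essentially the paper's: both reduce $f^{(t)}_{m,\sigma,\tau}(x)$ to the scalar ratio $\int \alpha_{\sigma,\tau}^m f\,d\mu \big/ \int \alpha_{\sigma,\tau}^m\,d\mu$ with $\alpha_{\sigma,\tau}(x,y,t) = \langle \varphi^{(\mu)}_\sigma(x), U^t_\tau\kappa_\tau(\cdot,y)\rangle_{\mathcal H_\tau}$, pass to $H$ via the factorization $K_\tau^* U^t_\tau K_\tau = G_{\tau/2} e^{tV_\tau} G_{\tau/2}$ (the paper does the equivalent move of shifting $U^t_\tau$ to the left argument and invoking \eqref{eq:rkhs_conv_phi}), and then run the three limits using strong dynamical convergence, the Markov approximate-identity property, and the concentration lemma in \cref{app:approx_id}.

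Two small overclaims to tighten, neither of which changes the strategy. First, in the $\sigma\to 0^+$ step you assert uniform convergence of $\alpha_\sigma(\cdot,\cdot,t)$ on $M\times X_\mu$; \cref{prty:k4} only gives $L^2$ strong convergence of $G_\sigma$, and the paper accordingly proves (and only needs) $H$-norm convergence of the numerator and denominator separately. Second, the blanket bound $\lvert f^{(t)}_{m,\sigma,\tau}\rvert \le \lVert f\rVert_\infty$ is not available for $\tau>0$ because $e^{tV_\tau}$ need not preserve positivity, so $\alpha_{\sigma,\tau}$ can change sign; the paper instead shows $C(M)$-uniform convergence of $w(m)^2\varpi_\sigma^m h^{(t)}_{m,\sigma,\tau}$ to $h^{(t)}_{m,\sigma}$ (which is bounded below by $b^m$ via the Markov property of $k_\sigma$), yielding a uniform-in-$x$ lower bound on the denominator for small $\tau$ that substitutes for your positivity argument.
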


\begin{proof}
    See \cref{sec:fock_consistency}.
\end{proof}

\subsection{Finite-dimensional approximation}
\label{sec:finite_dimensional_overview}

To arrive at a practical approximation algorithm, we approximate $f^{(t)}_{m,\sigma,\tau}(x) = \hat f^{(t)}_{m,\tau}(\hat \varphi_{\sigma,\tau}(x))$ by evaluating $\hat f^{(t)}_{m,\tau}$ on points in finite-dimensional tori in $\mathbb S_\tau$ lying near $\hat\varphi_{\sigma,\tau}(x)$.

To that end, for a parameter $d \in \mathbb N$, define the truncated feature map $\varphi_{\sigma,\tau,d} \colon X \to \mathcal H_\tau(X_\mu)$,
\begin{equation}
    \label{eq:feature_vec_d}
    \varphi_{\sigma,\tau,d}(x) = a_{0,\sigma,\tau}(x) + \sum_{j=1}^d a_{j,\sigma,\tau}(x) \frac{z_{j,\sigma,\tau}(x)\zeta_{j,\tau} + \overline{z_{j,\sigma,\tau}(x)\zeta_{j,\tau}}}{\sqrt 2},
\end{equation}
where the amplitudes $a_{j,\sigma,\tau}(x) \in \mathbb R_{\geq 0}$ and coordinates $z_{j,\sigma,\tau} \in S^1 \subset \mathbb C$ are given by specializing \eqref{eq:feat_a} to the feature map $\varphi^{(\mu)}_\sigma$ from \cref{sec:embedding_overview},
\begin{displaymath}
    a_{0,\sigma,\tau} = \frac{1}{\varpi_\sigma} \lvert \langle \zeta_{j,\tau}, \varphi^{(\mu)}_\sigma(x)\rangle_{\mathcal H_\tau}\rvert, \quad a_{j,\sigma,\tau}(x) = \frac{\sqrt 2}{\varpi_\sigma} \lvert \langle \zeta_{j,\tau}, \varphi^{(\mu)}_\sigma(x) \rangle_{\mathcal H_\tau}\rvert, \quad z_{j,\sigma,\tau}(x) = e^{i \arg\langle \zeta_{j,\tau}, \varphi^{(\mu)}_\sigma(x)\rangle_{\mathcal H_\tau}}.
\end{displaymath}
We then have that $\varphi_{\sigma,\tau,d}(x)$ converges to $\varphi^{(\mu)}_\sigma(x)$ in the norm of $\mathcal H_\tau$ for every $x \in X$. We also define truncated versions $\tilde\varphi_{\sigma,\tau,d} \colon X \to \fk$ and $\hat\varphi_{\sigma,\tau,d} \colon X \to \sigma(\fk)$ of $\tilde\varphi^{(\mu)}_\sigma$ and $\hat\varphi_{\sigma,\tau}$, respectively,
\begin{displaymath}
    \tilde\varphi_{\sigma,\tau,d}(x) = \sum_{n=0}^\infty \frac{w^{-2}(n)}{\varpi_\sigma^n} \varphi_{\sigma,\tau,d}(x)^{\vee n}, \quad \hat\varphi_{\sigma,\tau,d}(x) = \langle \tilde\varphi_{\sigma,\tau,d}(x), \cdot \rangle_\fk.
\end{displaymath}
Note that the image of $x$ under $\hat\varphi_{\sigma,\tau,d}$ lies in the finite-dimensional torus $ \mathbb T_{\tau,a_{\sigma,\tau}^{(d)}(x)} \equiv \mathbb T_{\sigma,\tau,d,x}$, where $a^{(d)}_{\sigma,\tau}(x) = (a_{0,\sigma,\tau}(x), \ldots a_{d,\sigma,\tau}(x), 0, \ldots) \in \mathbb A_w$. Moreover, from~\eqref{eq:feat_sigma_tau} and~\eqref{eq:feat_sigma_tau_l2} we get
\begin{displaymath}
    \langle \zeta_{j,\tau}, \varphi^{(\mu)}_\sigma(x)\rangle_{\mathcal H_\tau} = \langle \xi_{j,\tau}, T_\tau^* \varphi^{(\mu)}_\sigma(x)\rangle_H = (K_{\sigma+\frac{\tau}{2}} \xi_{j,\tau})(x), \quad \forall x \in X,
\end{displaymath}
so the coefficients $a_{j,\sigma,\tau}(x)$ and $z_{j,\sigma,\tau}(x)$ in~\eqref{eq:feature_vec_d} can be obtained by pointwise evaluation of kernel-smoothed approximate Koopman eigenfunctions $\xi_{j,\tau} \in H$. In what follows, we set
\begin{equation}
    \label{eq:zeta_sigma}
    \zeta_{j,\sigma,\tau} = K_{\sigma+\frac{\tau}{2}}\xi_{j,\tau} \equiv K_\sigma \iota \zeta_{j,\tau} \in \mathcal H_{\sigma + \frac{\tau}{2}},
\end{equation}
where $\zeta_{j,0,\tau} = \zeta_{j,\tau}$.

We approximate the prediction function $f^{(t)}_{m,\sigma,\tau}$ from~\eqref{eq:f_t_tau_m} by pulling back $\hat f^{(t)}_{m,\tau}$ from the tori $\mathbb T_{\sigma,\tau,d,x}$ to the state space $X$, viz.
\begin{displaymath}
    f^{(t)}_{m,\sigma,\tau,d} = \hat f^{(t)}_{m,\tau} \circ \hat \varphi_{\sigma,\tau,d} \in C(X).
\end{displaymath}
As $d \to \infty$ at fixed $\sigma$, $\tau$, and $m$, $f^{(t)}_{m,\sigma,\tau,d}$ converges to $f^{(t)}_{m,\sigma,\tau}$ pointwise on $X$ and in the norm of $H$,
\begin{equation}
    \label{eq:fock_consistency_d}
    \lim_{d\to\infty}\left\lVert \iota (f^{(t)}_{m,\sigma,\tau,d} - f^{(t)}_{m,\sigma,\tau})\right\rVert_H = 0;
\end{equation}
see \cref{sec:fock_consistency_d} for further details. Combining~\eqref{eq:fock_consistency_d} with \cref{thm:fock_consistency}, we deduce that the finite-dimensional Fock space approximation $f^{(t)}_{m,\sigma,\tau,d}$ converges to the true Koopman evolution $U^t f$ in $L^2$ sense in the iterated limit of $m \to \infty$ after $\sigma \to 0^+$ after $\tau \to 0^+$ after $d \to \infty$.

For simplicity of exposition, we will henceforth assume that the coefficients $a_{1,\sigma,\tau}(x), \ldots, a_{d,\sigma,\tau}(x)$ are all nonzero at the chosen $x \in X$, so that $\mathbb T_{\sigma,\tau,d,x}$ is $d$-dimensional. We then have that the restrictions of the functions $\hat g^{(t)}_{m,\tau}$ and $\hat h^{(t)}_{m,\tau}$ in~\eqref{eq:f_t_tau_m} on $\mathbb T_{\sigma,\tau,d,x}$ are polynomials of degree $m$ in the variables $z_1, \ldots, z_d, \overline{z_1}, \ldots, \overline{z_d}$.

To build these polynomials, define for $i \in \{ -d, \ldots, d \}$ the functions
\begin{equation}
    \label{eq:smoothed_eigenfuncs}
    \varrho_{j,\tau} = \int_X \kappa(\cdot, x) \zeta_{j,\tau}(x) \, d\mu(x) \in \mathcal H_\tau.
\end{equation}
Define also the index set $\mathbb J_{d,m} = \{ j = (j_{-d}, \ldots, j_d) \in \mathbb N_0^{2d +1} : \lvert j \rvert = m \}$, and for each $j \in \mathbb J_{d,m}$, the moment coefficients
\begin{equation}
    \label{eq:moms}
    C_{j,\tau, d}^{(g)} = \int_X f(x) \prod_{i=-d}^d \varrho_{i,\tau}^{j_i}(x) \, d\mu(x), \quad C_{j,\tau,d}^{(h)} = \int_X \prod_{i=-d}^d \varrho_{i,\tau}^{j_i}(x) \, d\mu(x),
\end{equation}
and the amplitude functions
\begin{displaymath}
    \alpha_{j,\sigma,\tau,d}(x) = \prod_{i=-d}^d a_{i,\sigma,\tau}(x).
\end{displaymath}
We then have that for every $\chi_{\tau,a_{\sigma,\tau}^{(d)}(x), z} \in \mathbb T_{\sigma,\tau,d,x}$,
\begin{equation}
    \label{eq:hat_gh_t_m_tau_d}
    \begin{aligned}
        \hat g_{m,\tau}^{(t)}\left(\chi_{\tau,a_{\sigma,\tau}^{(d)}(x),z}\right) &= w^{-2}(m) \sum_{j \in \mathbb J_{d,m}} \binom{m}{j_{-d} \cdots j_d} \alpha_{j,\sigma,\tau,d}(x) C_{j,\tau,d}^{(g)} \prod_{r=1}^d e^{-i (j_r - j_{-r})\omega_{r}t} z_r^{j_r - j_{-r}}, \\
        \hat h_{m,\tau}^{(t)}\left(\chi_{\tau,a_{\sigma,\tau}^{(d)}(x),z}\right) &= w^{-2}(m) \sum_{j \in \mathbb J_{d,m}} \binom{m}{j_{-d} \cdots j_d} \alpha_{j,\sigma,\tau,d}(x) C_{j,\tau,d}^{(h)} \prod_{r=1}^d e^{-i (j_r - j_{-r})\omega_{r}t} z_r^{j_r - j_{-r}}.
    \end{aligned}
\end{equation}
Thus, on $\mathbb T_{\sigma,\tau,d,x}$, $\hat g_{m,\tau}^{(t)}$ and $\hat h_{m,\tau}^{(t)}$ reduce to degree-$m$ polynomials in the coordinates $z_r$ and their conjugates. Correspondingly, the prediction function $\hat f^{(t)}_{m,\tau}$ is a rational function of the $z_r$.

Pulling back to $X$ via the feature map $\hat\varphi_{\sigma,\tau,d}$, we arrive at the approximation
\begin{equation}
    \label{eq:f_t_m_tau_d}
    f^{(t)}_{m,\sigma,\tau,d} = \frac{\hat g^{(t)}_{m,\tau} \circ \hat\varphi_{\sigma,\tau,d}}{\hat h^{(t)}_{m,\tau} \circ \hat\varphi_{\sigma,\tau,d}} = \frac{\sum_{j \in \mathbb J_{d,m}} \binom{m}{j_{-d} \cdots j_d} C_{j,\tau,d}^{(g)} \prod_{r=1}^d e^{i (j_{-r} - j_r)\omega_{r,\tau}t} \zeta_{r,\sigma,\tau}^{j_{-r} - j_r}}{\sum_{j \in \mathbb J_{d,m}} \binom{m}{j_{-d} \cdots j_d} C_{j,\tau,d}^{(h)} \prod_{r=1}^d e^{i (j_{-r} - j_r)\omega_{r,\tau}t} \zeta_{r,\sigma,\tau}^{j_{-r} - j_r}}.
\end{equation}
A key aspect of this approximation, which is evident from both~\eqref{eq:hat_gh_t_m_tau_d} and~\eqref{eq:f_t_m_tau_d}, is that it captures spectral information from the entire set of eigenfrequencies $ \{ \tilde\omega_{j,\tau} \}_{j \in \mathbb J_{d,m}}$, containing $\binom{m + 2d}{2d}$ elements. Thus, similarly to the tensor network scheme outlined in \cref{sec:tensor_network}, the second-quantization approach presented in this paper provides a route for algebraic amplification of $(2d+1)$-dimensional approximation spaces $\mathcal Z_{\tau,d} = \{ \zeta_{-d,\tau}, \ldots, \zeta_{d,\tau} \} \subseteq \mathcal H_\tau$. As noted in the beginning of this section, an advantageous aspect of the second-quantization approach is that it does not require $\mathcal H_\tau$ to have RKHA structure. In addition, it provides a framework for building topological models of regularized Koopman evolution as rotation systems on tori.

\section{Weighted Fock space}
\label{sec:fock_space}

In this section, we discuss in more detail the structure of the weighted Fock spaces $\fk$ as commutative Banach algebras (\cref{sec:symmetric_fock_space}), and establish the decomposition of their spectra in terms of the tori $\mathbb T_{\tau,a}$ (\cref{sec:spec_decomp}).

\subsection{Banach algebra structure}
\label{sec:symmetric_fock_space}

We will take an indirect approach to proving that $\fk$ is a commutative Banach algebra that is isomorphic to an RKHA of continuous functions on its spectrum. We will pass through the full weighted Fock space, $\fkf$, as a noncommutative Banach algebra with a comultiplication map and pass to the symmetric Fock space by taking a quotient.

In what follows, given a Hilbert space $\mathcal H$ (not necessarily an RKHS), and a weight $w \colon \mathbb N_0 \to \mathbb R_{>0}$ with $w(0) = 1$, $F_w^\otimes(\mathcal H)$ will be the weighted Fock space defined as the closure of the tensor algebra $T(\mathcal H) = \mathbb C \oplus \mathcal H \oplus \mathcal H^{\otimes 2} \oplus \ldots$ with respect to the inner product (cf.~\eqref{eq:weighted_sym_fock_innerp})
\begin{displaymath}
    \begin{gathered}
        \langle a, b \rangle_{F_w^\otimes(\mathcal H)} = \bar a b, \quad a, b \in \mathbb C, \\
        \langle a, f \rangle_{F_w^\otimes(\mathcal H)} = 0, \quad a \in \mathbb C, \quad f \in \mathcal H, \\
        \langle f_1 \otimes \cdots \otimes f_n, g_1 \otimes \cdots \otimes g_n \rangle_{F_w^\otimes(\mathcal H)} = w^2(n) \prod_{i=1}^n \langle f_i, g_i \rangle_{\mathcal H_\tau}, \quad f_i, g_i \in \mathcal H.
    \end{gathered}
\end{displaymath}
Moreover, we will use $\boxtimes$ to denote the usual tensor product between two Hilbert spaces in order to distinguish it from the tensor product $\otimes$ within $F_w^\otimes(\mathcal H)$. We define the symmetric Fock space $F_w(\mathcal H)$ similarly to $F_w(\mathcal H_\tau)$ from \cref{sec:overview_fock_space} as the closed subspace of $F_w^\otimes(\mathcal H)$ spanned by symmetric tensors. We let  $\text{Sym} \colon F_w^\otimes(\mathcal H) \to F_w(\mathcal H)$ be the orthogonal projection onto that subspace.

\begin{prop}
    \label{prop:rkha}
    With notation as above, let $w \colon \mathbb N_0 \to \mathbb R_{>0}$ be inverse square-summable, $w^{-1} \in \ell^2(\mathbb N)$. Then, there exists a unique bounded operator $\Delta \colon F_w^\otimes(\mathcal H) \to F_w^\otimes(\mathcal H) \boxtimes F_w^\otimes(\mathcal H)$ such that $\Delta^*(f \boxtimes g) = f \otimes g$ iff $w^{-2}$ is subconvolutive, $w^{-2} * w^{-2} \leq C w^{-2}$.
\end{prop}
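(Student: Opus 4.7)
The idea is to reduce the question to an estimate on graded components. Write every $F\in F_w^\otimes(\mathcal H)\boxtimes F_w^\otimes(\mathcal H)$ as an $\ell^2$-sum of components $F_{m,n}\in \mathcal H^{\otimes m}\boxtimes\mathcal H^{\otimes n}$, and note that by the defining property $\Delta^*(f\boxtimes g)=f\otimes g$ on homogeneous tensors, $\Delta^*$ must act gradewise by concatenation, sending $F_{m,n}$ to the corresponding vector in $\mathcal H^{\otimes(m+n)}$ via the natural isometric identification $\mathcal H^{\otimes m}\boxtimes\mathcal H^{\otimes n}\cong \mathcal H^{\otimes(m+n)}$. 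Uniqueness is then immediate since homogeneous elementary tensors span a dense subspace.

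For the direction ``subconvolutive $\Rightarrow$ bounded,'' I would first compute, on homogeneous tensors, $\lVert f\otimes g\rVert_{F_w^\otimes}^2 = w^2(m+n)\lVert f\rVert_{\mathcal H^{\otimes m}}^2\lVert g\rVert_{\mathcal H^{\otimes n}}^2$ versus $\lVert f\boxtimes g\rVert^2 = w^2(m)w^2(n)\lVert f\rVert^2\lVert g\rVert^2$, so the distortion in grade $k=m+n$ is $w^2(k)/(w^2(m)w^2(n))$. For general $F$, after grouping by total degree $k$ and applying the triangle inequality in $\mathcal H^{\otimes k}$,
\begin{equation*}
    \lVert \Delta^* F\rVert_{F_w^\otimes}^2 \leq \sum_{k}w^2(k)\Bigl(\sum_{m+n=k}\lVert F_{m,n}\rVert_{\mathcal H^{\otimes m}\boxtimes\mathcal H^{\otimes n}}\Bigr)^2.
\end{equation*}
The inner sum is handled by Cauchy--Schwarz against the weights $w(m)w(n)$ and $w^{-1}(m)w^{-1}(n)$, which pulls out a factor of $(w^{-2}*w^{-2})(k)$. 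The subconvolutive bound then cancels $w^2(k)$ against $w^{-2}(k)$ up to the constant $C$, leaving
\begin{equation*}
    \lVert \Delta^* F\rVert_{F_w^\otimes}^2 \leq C\sum_{k}\sum_{m+n=k}w^2(m)w^2(n)\lVert F_{m,n}\rVert^2 = C\lVert F\rVert_{F_w^\otimes\boxtimes F_w^\otimes}^2.
\end{equation*}
This shows that $\Delta^*$ extends boundedly from the dense algebraic tensor product, and $\Delta$ is recovered as its adjoint.

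For the converse, I would test boundedness against the extremizers of the Cauchy--Schwarz step above. Fix a unit vector $v\in \mathcal H$ (assuming $\mathcal H\neq 0$; the other case is trivial), a degree $k\geq 0$, and coefficients $c_{m,n} = w^{-2}(m)w^{-2}(n)$ for $m+n=k$. Setting $F_k=\sum_{m+n=k} c_{m,n}\, v^{\otimes m}\boxtimes v^{\otimes n}$, a direct computation gives $\lVert F_k\rVert^2 = (w^{-2}*w^{-2})(k)$ while $\Delta^* F_k = (w^{-2}*w^{-2})(k)\,v^{\otimes k}$, so $\lVert\Delta^* F_k\rVert^2 = w^2(k)\,[(w^{-2}*w^{-2})(k)]^2$. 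Boundedness $\lVert\Delta^*\rVert\leq C^{1/2}$ thus forces $w^2(k)(w^{-2}*w^{-2})(k)\leq C$ for every $k$, which is exactly the subconvolutive inequality $w^{-2}*w^{-2}\leq C w^{-2}$.

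The main technical point to get right is the interplay between the two tensor-product inner products (the weighted $\otimes$ internal to $F_w^\otimes$ and the standard Hilbert $\boxtimes$) and the identification $\mathcal H^{\otimes m}\boxtimes \mathcal H^{\otimes n}\cong \mathcal H^{\otimes(m+n)}$; the hypothesis $w^{-1}\in\ell^2$ is used to ensure that $(w^{-2}*w^{-2})(k)$ is a finite number for each $k$, so the extremizing sums in the converse argument, and the Cauchy--Schwarz step in the direct argument, are legitimate. Everything else is bookkeeping with the grading.
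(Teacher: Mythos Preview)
Your argument is correct, but it is organized differently from the paper's. You work on the adjoint side: you bound $\Delta^*$ on graded components by a triangle inequality plus Cauchy--Schwarz (forward direction), and then manufacture extremizing test vectors $F_k$ to force the subconvolutive inequality (converse). The paper instead works directly with $\Delta$: it fixes an orthonormal basis $\{\psi_J\}$ of $F_w^\otimes(\mathcal H)$ induced by an orthonormal basis of $\mathcal H$, computes that $\{\Delta\psi_J\}$ is an orthogonal family with $\lVert\Delta\psi_J\rVert^2=(w^{-2}\!*w^{-2})(|J|)/w^{-2}(|J|)$, and reads off both directions of the ``iff'' from this single diagonal computation. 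The paper's route is tighter---no triangle inequality slack, no separate converse argument---while yours is perhaps more conceptual in identifying $\Delta^*$ as concatenation up to weight distortion, and it does not require choosing a basis of $\mathcal H$. Either way the same quantity $\sup_k w^2(k)(w^{-2}\!*w^{-2})(k)$ governs boundedness.
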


\begin{proof}
    Fix an orthonormal basis $\{\phi_i\}_{i=0}^\infty \subset \mathcal H$, and write $\phi^{\otimes J} = \phi_{j_1} \otimes \cdots \otimes \phi_{j_m}$ for $j \in \mathbb J^\otimes := \bigcup_{m=0}^\infty \mathbb N_0^m$ with the convention that $\mathbb N_0^0 = \{ 0 \}$ and $f^{\otimes 0} = \Omega$ for every $f \in F_w^{\otimes}(\mathcal H)$ (cf.\ the basis vectors $\zeta_\tau^{\vee J}$ for the symmetric Fock space from \cref{sec:overview_fock_space}). Note that the multi-indices $J \in \mathbb J^\otimes$ form a graded semigroup with grading $\lvert J\rvert=m$ if $J \in \mathbb N^m_0$ and $J \cdot J'$ given by concatenation. Then $ \{\psi_J := w^{-1}(\lvert J \rvert) \phi^{\otimes J} \}_{J \in \mathbb J^\otimes}$ is an orthonormal basis of $F_w^\otimes(\mathcal H)$.

Define
\begin{displaymath}
    \Delta \psi_J
    = \sum_{J',J'' \in \mathbb J^\otimes} \left\langle \psi_{J'} \otimes \psi_{J''},\psi_J \right\rangle_{F_w^\otimes(\mathcal H)} \psi_{J'} \boxtimes \psi_{J''}
    =\sum_{J' \cdot J'' = J} \frac{w^{-1}(\lvert J'\rvert)w^{-1}(\lvert J''\rvert)}{w^{-1}(\lvert J \rvert)} \psi_{J'} \boxtimes \psi_{J''},
\end{displaymath}
and observe that
\begin{displaymath}
    \left\langle \Delta \psi_J, \Delta \psi_L \right\rangle_{F_w^\otimes(\mathcal H)} =
    \delta_{JL} \sum_{a+b=\lvert J \rvert} \frac{w^{-2}(a)w^{-2}(b)}{w^{-2}(a+b)} = \delta_{JL}  \dfrac{w^{-2} * w^{-2}(\lvert J \rvert)}{w^{-2}(|J|)}.
\end{displaymath}
Clearly, $\Delta$ extends to a bounded linear operator on $F_w^\otimes(\mathcal H)$ iff $w^{-2}$ is subconvolutive. Moreover, we have
\begin{multline*}
    \langle \psi_J, \Delta^*(\phi^{\otimes J'} \boxtimes \phi^{\otimes J''})\rangle_{F_w^\otimes(\mathcal H)} \\
    \begin{aligned}
        &= \sum_{K' \cdot K'' = J} \frac{\langle w^{-1}(\lvert K' \rvert) w^{-1}(\lvert K'' \rvert) \phi^{\otimes K'} \boxtimes \phi^{\otimes K''}, w^{-1}(\lvert K'\rvert) w^{-1}(\lvert K'' \rvert) \phi^{\otimes J'} \boxtimes \phi^{\otimes J''} \rangle_{F_w^\otimes(\mathcal H)\boxtimes F_w^\otimes(\mathcal H)}}{w^{-1}(\lvert J \rvert)}\\
        &=\delta_{J,J'\cdot J''} w(\lvert J \rvert)\\
        &= \langle  \psi_J, \phi^{\otimes J'} \otimes \phi^{\otimes J''}\rangle_{F_w^\otimes(\mathcal H)},
    \end{aligned}
\end{multline*}
which implies that $\Delta^*(f \boxtimes g) = f \otimes g$. This also implies that $\Delta$ is independent of the choice of basis $\{\phi_i\}_{i=0}^\infty \subset \mathcal H$.
\end{proof}

Boundedness of $\Delta$ implies that $(F_w^\otimes(\mathcal H), \otimes, 1_{\mathbb C})$ is a unital Banach algebra with respect to the operator norm (cf.\ \cref{sec:overview_rkha}),
\begin{displaymath}
    \vertiii{ f \otimes g }_{F_w^\otimes(\mathcal H)} \leq \vertiii{ f }_{F_w^\otimes(\mathcal H)} \vertiii{ g }_{F_w^\otimes(\mathcal H)},
\end{displaymath}
where
\begin{displaymath}
    \vertiii{ f}_{F_w^\otimes(\mathcal H)} = \sup_{h\in F^\otimes_w(\mathcal H) \setminus \{0\}} \frac{\lVert f \otimes h \rVert_{F^\otimes_w(\mathcal H)}}{\lVert h\rVert_{F_w^\otimes(\mathcal H)}}.
\end{displaymath}
Because $F^\otimes_w(\mathcal H)$ is unital, this norm is equivalent to the Hilbert space norm \cite{GiannakisMontgomery24},
\begin{displaymath}
    \lVert f \rVert_{F^\otimes_w(\mathcal H)} \leq \vertiii{ f}_{F_w^\otimes(\mathcal H)} \leq \lVert \Delta\rVert \lVert f\rVert_{F^\otimes_w(\mathcal H)}.
\end{displaymath}
Similarly to \cref{sec:rkha}, we will refer to $\Delta \colon F_w^\otimes(\mathcal H) \to F_w^\otimes(\mathcal H) \boxtimes F_w^\otimes(\mathcal H)$ as a coproduct.

\begin{prop}
    \label{prop:rkha_spec}
    The spectrum of $F_w^\otimes(\mathcal H)$ is given by
\begin{displaymath}
    \sigma(F_w^\otimes(\mathcal H)) =\left\lbrace \chi = \langle \xi, \cdot \rangle_{F_w^\otimes(\mathcal H)} : \xi = \sum_{n=0}^\infty w^{-2}(n) \eta^{\otimes n}: \eta \in (\mathcal H)_{R_w} \right\rbrace,
\end{displaymath}
where $R_w$ is the radius of convergence of $\sum_{n=0}^\infty w^{-2}(n)z^{2n}$, $z \in \mathbb C$, and the spectrum is endowed with the topology of weak-$^*$ convergence. Moreover, the weak-$^*$ topology on $\sigma(F_w^{\otimes}(\mathcal H))$ is equivalent to the weak topology on $(\mathcal H)_{R_w}$ with the identification of $\eta \mapsto \chi$ as above.
\end{prop}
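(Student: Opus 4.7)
The plan is to prove the set equality by two inclusions and then upgrade the induced bijection to a homeomorphism by a compactness argument. For the inclusion $\supseteq$, fix $\eta \in (\mathcal H)_{R_w}$. The first step is to verify that $\xi := \sum_n w^{-2}(n)\eta^{\otimes n}$ converges in $F_w^\otimes(\mathcal H)$ by computing
\begin{displaymath}
    \lVert \xi \rVert^2_{F_w^\otimes(\mathcal H)} = \sum_{n=0}^\infty w^{-4}(n)\lVert \eta^{\otimes n}\rVert^2_{F_w^\otimes(\mathcal H)} = \sum_{n=0}^\infty w^{-2}(n) \lVert \eta\rVert^{2n}_\mathcal H,
\end{displaymath}
which lies within the disc of convergence since $\lVert\eta\rVert \leq R_w$. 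Setting $\chi = \langle \xi, \cdot\rangle_{F_w^\otimes(\mathcal H)}$ and evaluating on a simple tensor, only the $n=k$ term survives and the two weight factors cancel to give $\chi(f_1 \otimes \cdots \otimes f_k) = \prod_{i=1}^k \langle \eta, f_i\rangle_\mathcal H$. This formula manifestly factorizes over tensor products, hence $\chi$ is multiplicative on simple tensors and, by continuity and density, on all of $F_w^\otimes(\mathcal H)$.

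For the reverse inclusion, given $\chi \in \sigma(F_w^\otimes(\mathcal H))$, the Riesz representation theorem on the Hilbert space $F_w^\otimes(\mathcal H)$ furnishes a unique $\xi$ with $\chi = \langle \xi, \cdot\rangle$; decompose $\xi = \sum_n \xi_n$ with $\xi_n \in \mathcal H^{\otimes n}$. Let $\eta \in \mathcal H$ be the Riesz representative of $\chi|_\mathcal H$ relative to $\langle \cdot, \cdot\rangle_\mathcal H$. Iterating multiplicativity yields
\begin{displaymath}
    \chi(f_1 \otimes \cdots \otimes f_k) = \prod_{i=1}^k \chi(f_i) = \langle \eta^{\otimes k}, f_1 \otimes \cdots \otimes f_k\rangle_{\mathcal H^{\otimes k}},
\end{displaymath}
where $\langle \cdot, \cdot\rangle_{\mathcal H^{\otimes k}}$ denotes the standard unweighted tensor-product inner product. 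Comparing with $\chi(f_1 \otimes \cdots \otimes f_k) = w^2(k) \langle \xi_k, f_1 \otimes \cdots \otimes f_k\rangle_{\mathcal H^{\otimes k}}$ and invoking density of simple tensors in $\mathcal H^{\otimes k}$ forces $\xi_k = w^{-2}(k) \eta^{\otimes k}$. Finiteness of $\lVert\xi\rVert^2 = \sum_n w^{-2}(n) \lVert\eta\rVert^{2n}$ then constrains $\lVert\eta\rVert$ to lie inside the disc of convergence, placing $\eta \in (\mathcal H)_{R_w}$. Together with the previous paragraph this establishes the claimed set equality.

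For the topology statement, both $\sigma(F_w^\otimes(\mathcal H))$ under the weak-$^*$ topology and $(\mathcal H)_{R_w}$ under the weak topology of $\mathcal H$ are compact Hausdorff: the former because characters of a unital Banach algebra satisfy $\lVert\chi\rVert_{\mathrm{op}} \leq 1$ with respect to the Banach-algebra norm $\vertiii{\cdot}_{F_w^\otimes(\mathcal H)}$ and the spectrum is weak-$^*$ closed in $F_w^\otimes(\mathcal H)^*$; the latter by Banach--Alaoglu applied to $\mathcal H$ viewed as its own dual. I then check continuity of the bijection $\eta \mapsto \chi_\eta$ from weak to weak-$^*$: on simple tensors $\chi_{\eta_\alpha}(f_1 \otimes \cdots \otimes f_k) = \prod_i \langle \eta_\alpha, f_i\rangle_\mathcal H \to \prod_i \langle \eta, f_i\rangle_\mathcal H$, and extension to all of $F_w^\otimes(\mathcal H)$ follows from density combined with the uniform bound $\lVert \xi_{\eta_\alpha}\rVert_{F_w^\otimes(\mathcal H)} \leq \lVert\Delta\rVert$, which comes from $\lVert\chi_{\eta_\alpha}\rVert_{\mathrm{op}} = 1$ together with the norm equivalence $\lVert \cdot \rVert \sim \vertiii{\cdot}$. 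Since a continuous bijection between compact Hausdorff spaces is automatically a homeomorphism, the topologies agree under the identification.

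The most delicate point I anticipate is the boundary of the closed ball: convergence of $\sum_n w^{-2}(n) R_w^{2n}$ at the radius of convergence is not automatic, so either $(\mathcal H)_{R_w}$ is implicitly restricted to those $\eta$ for which the defining series for $\xi$ converges, or one exploits the subconvolutive bound $w^{-2} \ast w^{-2} \leq C w^{-2}$ to secure summability at the boundary. Modulo this endpoint care, the computation is routine; the conceptual core is that multiplicativity collapses all graded components of the Riesz vector $\xi$ into pure tensor powers of a single element $\eta \in \mathcal H$ obtained by Riesz representation of the restriction $\chi|_\mathcal H$.
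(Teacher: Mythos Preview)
Your argument for the set equality is essentially identical to the paper's: restrict a character to $\mathcal H$, invoke Riesz representation to obtain $\eta$, and then use multiplicativity to force $\xi_k = w^{-2}(k)\eta^{\otimes k}$. For the topological statement the paper instead verifies both implications directly (weak convergence $\eta_i\to\eta$ implies weak-$^*$ convergence of $\chi_i$ on simple tensors and then on all of $F_w^\otimes(\mathcal H)$ by uniform boundedness, and conversely weak-$^*$ convergence restricted to $\mathcal H\subset F_w^\otimes(\mathcal H)$ gives weak convergence of the $\eta_i$); your compactness route is a legitimate shortcut, trading the easy reverse implication for the observation that a continuous bijection between compact Hausdorff spaces is a homeomorphism.

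Your flagged boundary issue is real and the paper does not address it, but your suggested resolution via subconvolutivity is correct and worth recording: for $z\in[0,R_w^2)$ the inequality $w^{-2}*w^{-2}\le C\,w^{-2}$ gives $\bigl(\sum_n w^{-2}(n)z^n\bigr)^2\le C\sum_n w^{-2}(n)z^n$, hence $\sum_n w^{-2}(n)z^n\le C$ uniformly on $[0,R_w^2)$, and monotone convergence then yields convergence at $z=R_w^2$. This is exactly what is needed for $\xi$ to lie in $F_w^\otimes(\mathcal H)$ when $\lVert\eta\rVert_{\mathcal H}=R_w$, and it also secures weak compactness of the closed ball so that your compactness argument goes through.
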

\begin{proof}
    Let $\chi \colon F_w^\otimes(\mathcal H) \to \mathbb C$ be a nonzero multiplicative linear functional. Note that $\chi$ is necessarily unital since $F_w^\otimes(\mathcal H)$ is unital, $\chi \Omega = 1$. Since $\chi(\phi^{\otimes J}) = \prod_{i=1}^{|j|} \chi(\phi_{j_i})$, $\chi$ is determined by its restriction to $\mathcal H \subset F_w^\otimes(\mathcal H)$. Since characters are automatically continuous and the Hilbert space and Banach algebra norms, respectively $\lVert \cdot\rVert_{F_w^\otimes(\mathcal H)}$ and $\vertiii{ \cdot }_{F_w^\otimes(\mathcal H)}$, are equivalent, we have $\chi |_{\mathcal H} = \langle \eta, \cdot \rangle_{\mathcal H}$ for some $\eta \in \mathcal H$. As a consequence, $\chi = \langle \sum_{n=0}^\infty w^{-2}(n) \eta^{\otimes n}, \cdot \rangle_{F_w^\otimes(\mathcal H)}$. The restriction of $ \lVert \eta\rVert_{\mathcal H} \in [0, R_w]$ follows from finiteness of $\lVert \sum_{n=0}^\infty w^{-2}(n) \eta^{\otimes n} \rVert_{F_w^\otimes(\mathcal H)}$. The converse is easily verified by checking multiplicativity of $\langle \sum_{n=0}^\infty w^{-2}(n) \eta^{\otimes n}, \cdot \rangle_{F_w^\otimes(\mathcal H)}$.

Let $\tilde R_w = \sum_{n=0}^\infty w^{-2}(n)R_w^{2n}$. Since $w^{-2}$ is strictly positive and summable, weak convergence of $\eta_i \to \eta$ in $(\mathcal H)_{R_w}$ is equivalent to weak-$^*$ convergence of $\chi_i = \langle \xi_i, \cdot \rangle_{F_w^\otimes(\mathcal H)} \to \chi = \langle \xi, \cdot \rangle_{F_w^\otimes(\mathcal H)}$ in $\sigma(F^\otimes_w(\mathcal H))$, where $ \xi_i = \sum_{n=0}^\infty w^{-2}(n) \eta_i^{\otimes n}$ and $ \xi = \sum_{n=0}^\infty w^{-2}(n) \eta^{\otimes n}$ lie in $(F^\otimes_w(\mathcal H))_{\tilde R_w}$. Indeed, if $\eta_i \to \eta$ weakly, then for every $n \in \mathbb N_0$ and all $f_j \in \mathcal H$, $\chi_i(\bigotimes_{j=1}^n f_j) = \langle \eta_i^{\otimes n}, \bigotimes_{j=0}^n f_j \rangle_{F_w^\otimes(\mathcal H)} \to \langle \eta^{\otimes n}, \bigotimes_{j=1}^n f_j,\rangle_{F_w^\otimes(\mathcal H)} = \chi(\bigotimes_{j=1}^n f_j)$. Uniform boundedness of $\eta_i$ then implies weak-$^*$ convergence of $\chi_i$ to $\chi$. For the opposite direction, $\langle \eta_i, f\rangle_\mathcal H = \langle \sum_{n=0}^\infty w^{-2}(n) \eta_i^{\otimes n}, f \rangle_{F_w^\otimes(\mathcal H)} = \chi_i f \to \chi f =\langle \eta, f \rangle_\mathcal H$.
\end{proof}

By \cref{prop:rkha_spec}, for every $f, g \in F_w^\otimes(\mathcal H)$ and $\chi = \langle \xi, \cdot \rangle_{F_w^\otimes(\mathcal H)} \in \sigma(F_w^\otimes(\mathcal H))$,
$$\langle \Delta\xi, f\boxtimes g \rangle_{F_w^\otimes(\mathcal H) \boxtimes F_w^\otimes(\mathcal H)} =  \chi(\Delta^*(f \boxtimes w)) = (\chi f)(\chi g) = \langle \xi \boxtimes \xi, f \boxtimes g\rangle_{F_w^\otimes(\mathcal H) \boxtimes F_w^\otimes(\mathcal H)}.$$
Thus, we can equivalently characterize the spectrum of $F_w^\otimes(\mathcal H)$ as
\begin{equation}
    \label{eq:spec_comult}
    \sigma(F_w(\mathcal H)) = \left\{ \chi = \langle \xi, \cdot\rangle_{F_w^\otimes(\mathcal H)} : \Delta \xi = \xi \boxtimes \xi \right\}.
\end{equation}
The set of vectors $\xi$ in~\eqref{eq:spec_comult} is then the cospectrum $\sigma_\text{co}(F_w^\otimes(\mathcal H)) \subset F_w^\otimes(\mathcal H)$.

We may now pass to a quotient Banach algebra and Hilbert space to recover the symmetric Fock space $F_w(\mathcal H)$.

\begin{thm}
    \label{thm:quotient}
    With notation as above and $w^{-1} \in l^2(\mathbb N_0)$ strictly positive and $w^{-2}$ subconvolutive, we have the following isomorphisms as Hilbert spaces and Banach algebras:
$$F_w^\otimes(\mathcal H) \Big/ \bigcap_{\chi \in \sigma(F_w^\otimes(\mathcal H))} \ker(\chi) \cong \overline{\spn{ \left\lbrace\sum_{n=0}^\infty w^{-2}(n) \eta^{\otimes n}:\eta \in (\mathcal H)_{R_w} \right\rbrace }} \cong F_w(\mathcal H).$$
Moreover, $F_w^\otimes(\mathcal H)$ and $F_w(\mathcal H)$ have the same spectra as Banach algebras.
\end{thm}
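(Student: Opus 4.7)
The plan is to reduce both isomorphisms to a single statement: that the closed linear span of the vectors $\xi_\eta := \sum_{n=0}^\infty w^{-2}(n) \eta^{\otimes n}$, as $\eta$ ranges over the ball $(\mathcal H)_{R_w}$, coincides with the symmetric Fock space $F_w(\mathcal H)$. By \cref{prop:rkha_spec}, every character of $\fkf$ is realized as $\langle \xi_\eta, \cdot\rangle_{\fkf}$ for some $\eta \in (\mathcal H)_{R_w}$, so $\bigcap_{\chi} \ker \chi = \{\xi_\eta : \eta \in (\mathcal H)_{R_w}\}^\perp$. Once the middle equality is established, the quotient $\fkf / \{\xi_\eta\}^\perp$ is identified with $F_w(\mathcal H)$ via the orthogonal projection $\mathrm{Sym}$ onto $F_w(\mathcal H)$, by the standard Hilbert-space quotient isomorphism $\fkf/V^\perp \cong V$ for closed subspaces $V \subseteq \fkf$.

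For the middle equality, the inclusion $\supseteq$ is immediate since every $\xi_\eta$ is a sum of symmetric tensors. For the reverse inclusion, I would fix $\eta \in \mathcal H$ and view $t \mapsto \xi_{t\eta}$ as an $\fkf$-valued analytic function on the disc $\lvert t\rvert < R_w / \lVert \eta\rVert_{\mathcal H}$; convergence of the defining series in $\fkf$ holds because $w^{-1} \in \ell^2(\mathbb N_0)$. Each $\xi_{t\eta}$ lies in the closed span, hence so do its Banach-space Taylor coefficients at $t=0$, which are scalar multiples of $\eta^{\otimes n}$; equivalently, a finite Vandermonde inversion applied to $\xi_{t_1\eta}, \ldots, \xi_{t_N \eta}$ isolates each graded piece $\eta^{\otimes n}$. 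Rescaling $\eta$ removes the norm restriction, and the classical polarization identity then promotes $\{\eta^{\otimes n} : \eta \in \mathcal H\}$ to a dense subset of $\mathcal H^{\vee n}$ for every $n$; summability of $w^{-2}$ gives density of their aggregate in $F_w(\mathcal H)$.

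To upgrade the first isomorphism from Hilbert spaces to Banach algebras, I note that each $\ker \chi$ is a closed two-sided ideal (by continuity and multiplicativity of $\chi$), so $\bigcap_\chi \ker \chi$ is a closed two-sided ideal and the quotient $\fkf/\bigcap_\chi \ker \chi$ inherits a Banach algebra structure. Under the $\mathrm{Sym}$-identification with $F_w(\mathcal H)$, the quotient product corresponds to $\vee$ because $\mathrm{Sym}(f \otimes g) = \mathrm{Sym}(f) \vee \mathrm{Sym}(g)$ on simple tensors, extended by continuity. For equality of spectra, every character of $\fkf$ automatically vanishes on $\bigcap_\chi \ker \chi$ by construction and therefore descends to a character of the quotient; conversely every character of the quotient lifts uniquely to a character of $\fkf$, and the weak-$^*$ topologies correspond under this bijection. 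The hard part will be the Vandermonde/analyticity step: one must verify that termwise extraction of $\eta^{\otimes n}$ is legitimate in the Banach topology of $\fkf$, which relies precisely on the hypothesis $w^{-1} \in \ell^2(\mathbb N_0)$ governing the radius of convergence of $\sum_n w^{-2}(n) z^n$.
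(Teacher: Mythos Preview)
Your proposal is correct and follows essentially the same route as the paper: the orthogonal decomposition $\fkf = \mathcal X \oplus \overline{\spn\{\xi_\eta\}}$ for the first isomorphism, extraction of $\eta^{\otimes n}$ via Taylor coefficients of $t\mapsto \xi_{t\eta}$ together with polarization for the second, and identification of the quotient product with $\vee$ through $\mathrm{Sym}$. The only cosmetic difference is in the spectra argument, where the paper verifies directly that every character $\langle\xi,\cdot\rangle$ of $F_w(\mathcal H)$ satisfies $\Delta\xi=\xi\boxtimes\xi$ (hence lies in $\sigma(\fkf)$ via~\eqref{eq:spec_comult}), while you route through the quotient lift/descend; the two are equivalent.
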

\begin{proof}
Let $\mathcal X = \bigcap_{\chi \in \sigma(F_w^\otimes(\mathcal H)) } \ker(\chi) \subset F_w^\otimes(\mathcal H)$. The first isomorphism as Hilbert spaces follows from the orthogonal decomposition
$$F_w^\otimes(\mathcal H) = \mathcal X \oplus \overline{\spn{ \left\lbrace\sum_{n=0}^\infty w^{-2}(n) \eta^{\otimes n}: \eta \in (\mathcal H)_{R_w} \right\rbrace }}.$$
The quotient is also equipped with a Banach algebra structure with the norm
$$\vertiii{f+\mathcal X} = \inf_{g \in \mathcal X} \vertiii{f+g}_{F_w^\otimes(\mathcal H)},$$
which is equivalent to the Hilbert space norm
$$\lVert f+\mathcal X\rVert = \inf_{g \in \mathcal X} \lVert f+g\rVert_{F_w^\otimes(\mathcal H)}.$$

The second isomorphism as Hilbert spaces follows from two observations. For $f \in (\mathcal H)_{R_w}$ and $z \in \mathbb C$ with $\lvert z \rvert \leq 1$ we can isolate individual tensor powers using
$$f^{\otimes n} = \left. \frac{1}{w^{-2}(n) n!} \frac{d^n}{dz^n}\right\rvert_{z=0} \left(\sum_{n=0}^\infty w^{-2}(n)(zf)^{\otimes n}\right).$$
Second, the symmetric part of $\mathcal H^{\otimes n}$ is densely spanned by the vectors $f^{\otimes n}$ for $f \in \mathcal H$. Let  $n_i \in \mathbb N$ for $i = 1, \ldots ,m$ and $\{\phi_j\}_{j=1}^\infty \subset \mathcal H$ an orthonormal basis. Then by a combinatorial argument,
$$\frac{1}{n_1! \cdots n_m!}\sum_{\sigma \in S_n} \phi_{j_{\sigma(1)}} \otimes \cdots \otimes \phi_{j_{\sigma(n)}}  = \frac{1}{n_1! \cdots n_m!} \left. \frac{\partial^{n_1} \cdots \partial^{n_m}}{\partial z_1^{n_1} \cdots \partial z_m^{n_m}} \right|_{z_i = 0} \left( \sum_{i=1}^m z_i \phi_i \right)^{\otimes n},$$
where $(j_1, \ldots, j_n)$ is a vector with $n_l$ instances of $l$ and $\sum_{i=1}^m n_i =n$. The left hand side is a sum of every unique reordering of $\phi_{j_1} \otimes \cdots \otimes \phi_{j_n}$ while the right hand side is a sum of all sequences of $n$ numbers drawn from $1,\ldots,m$ with $n_l$ instances of $l$. Since the left hand side forms a basis for $\mathcal H^{\vee n} \subset \mathcal H^{\otimes n}$, we conclude that
$$\overline{\spn{ \left\lbrace\sum_{n=0}^\infty w^{-2}(n) \eta^{\otimes n}:\eta \in (\mathcal H)_{R_w} \right\rbrace }} \cong F_w(\mathcal H).$$

Meanwhile, the algebra structure on $F_w(\mathcal H)$ from the quotient and the symmetric tensor product are both given by $\text{Sym} \circ \Delta^*$. Combining the above facts, we conclude that the spaces $F_w^\otimes(\mathcal H) / \mathcal X$, $ \overline{\spn{ \left\lbrace\sum_{n=0}^\infty w^{-2}(n) \eta^{\otimes n}:\eta \in (\mathcal H)_{R_w} \right\rbrace }}$, and $F_w(\mathcal H)$ are isomorphic as Hilbert spaces and Banach algebras where all of the Banach algebra norms and Hilbert space norms are equivalent.

Finally, we identify the spectrum of $F_w(\mathcal H)$. By density of the linear span of $\sigma_\text{co}(F_w^\otimes(\mathcal H))$ in $F_w^\otimes(\mathcal H)$, the restriction of the coproduct to $F_w(\mathcal H)$ is well-defined as a linear map
$$\Delta|_{F_w(\mathcal H)} \colon F_w(\mathcal H) \to F_w(\mathcal H) \boxtimes F_w(\mathcal H) \subset F_w^\otimes(\mathcal H) \boxtimes F_w^\otimes(\mathcal H).$$
Thus, we have $\left(\Delta|_{F_w(\mathcal H)}\right)^* =\text{Sym} \circ \Delta^*$, where the adjoint in the left-hand side is taken on $B(F_w(\mathcal H), F_w(\mathcal H) \boxtimes F_w(\mathcal H))$ and in the right-hand side it is taken on $B(F_w^\otimes(\mathcal H), F_w^\otimes(\mathcal H) \boxtimes F_w^\otimes(\mathcal H))$. Now, for every $\chi \in \sigma(F_w(\mathcal H))$, there exists $\xi \in F_w(\mathcal H)$ such that $\chi=\left\langle \xi, \cdot \right\rangle_{F_w(\mathcal H)}$, giving
\begin{align*}
    \left\langle \xi \boxtimes  \xi, f\boxtimes g \right\rangle_{F_w^\otimes(\mathcal H) \boxtimes F_w^\otimes(\mathcal H)}
    &= \langle \xi, f \rangle_{F_w(\mathcal H)} \langle \xi, g\rangle_{F_w(\mathcal H)} =(\chi f)(\chi g) \\
    &= \left\langle \xi, f \vee g \right\rangle_{F_w(\mathcal H)} = \left\langle \Delta \xi, f \boxtimes g \right\rangle_{F_w^\otimes(\mathcal H) \boxtimes F_w^\otimes(\mathcal H)}.
\end{align*}
Thus, we have $\Delta \xi = \xi \boxtimes \xi$ and it follows from~\eqref{eq:spec_comult} that $\sigma(F_w(\mathcal H)) \subseteq \sigma(F_w^\otimes(\mathcal H))$. Since $\sigma(F_w^\otimes(\mathcal H)) \subseteq \sigma(F_w(\mathcal H))$ by the subspace relation $F_w(\mathcal H) \subset F_w^\otimes(\mathcal H)$, we deduce that $\sigma(F_w(\mathcal H)) = \sigma(F_w^\otimes(\mathcal H))$.
\end{proof}

Letting $\Gamma \colon F_w(\mathcal H) \to C(\sigma(F_w(\mathcal H)))$ denote the Gelfand map on the commutative Banach algebra $F_w(\mathcal H)$, the following is a corollary of \cref{thm:quotient} and equality of the spectra of $F_w^\otimes(\mathcal H)$ and $F_w(\mathcal H)$.

\begin{cor}
    \label{cor:rkha}
    The Gelfand map $\Gamma \colon F_w(\mathcal H) \to C(\sigma(F_w(\mathcal H)))$ is injective. As a result, the space $\hat F_w(\mathcal H) := \Gamma(F_w(\mathcal H))$ equipped with the inner product $\langle \hat f, \hat g \rangle_{\hat F_w(\mathcal H)} = \langle \Gamma^{-1} \hat f, \Gamma^{-1} \hat g \rangle_{F_w(\mathcal H)}$ is a unital RKHA with reproducing kernel $\hat k \colon \sigma(F_w(\mathcal H)) \times \sigma(F_w(\mathcal H)) \to \mathbb C$ defined as
    \begin{displaymath}
        \hat k(\chi_1, \chi_2) = \langle \xi_1, \xi_2 \rangle_{F_w(\mathcal H)}, \quad \chi_i = \langle \xi_i, \cdot \rangle_{F_w(\mathcal H)}.
    \end{displaymath}
    Moreover, $F_w(\mathcal H)$ and $\hat F_w(\mathcal H)$ are isomorphic Banach algebras under the Gelfand map.
\end{cor}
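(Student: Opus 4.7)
The plan is to derive all three assertions from \cref{thm:quotient} and general Gelfand theory, with the injectivity of $\Gamma$ as the crux. Once injectivity is established, the RKHS kernel together with the Banach algebra and RKHA structures all transport through $\Gamma$ from their counterparts in $F_w(\mathcal H)$.

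First I would prove injectivity of $\Gamma$. By \cref{thm:quotient}, $F_w(\mathcal H)$ is isomorphic, as a Hilbert space, to the closed linear span of the cospectrum $\sigma_\text{co}(F_w(\mathcal H))$, whose elements $\xi$ are precisely the Riesz representers of the characters $\chi = \langle \xi, \cdot \rangle_{F_w(\mathcal H)}$, using also the identification $\sigma(F_w(\mathcal H)) = \sigma(F_w^\otimes(\mathcal H))$ from the same theorem. If $\Gamma f = 0$ for some $f \in F_w(\mathcal H)$, then $\chi f = \langle \xi, f \rangle_{F_w(\mathcal H)} = 0$ for every such $\xi$, so $f$ is orthogonal to a dense subspace of $F_w(\mathcal H)$ and must vanish. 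Equivalently, viewing $F_w(\mathcal H)$ as $\mathcal X^\perp \subset F_w^\otimes(\mathcal H)$, where $\mathcal X = \bigcap_{\chi \in \sigma(F_w^\otimes(\mathcal H))} \ker \chi$, the hypothesis forces $f \in \mathcal X \cap \mathcal X^\perp = \{0\}$.

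Given injectivity, I would transport the Hilbert space structure of $F_w(\mathcal H)$ through the bijection $\Gamma$, making it a unitary onto $\hat F_w(\mathcal H)$. For $\chi \in \sigma(F_w(\mathcal H))$ with Riesz representer $\xi_\chi \in F_w(\mathcal H)$ and any $\hat f = \Gamma f \in \hat F_w(\mathcal H)$, the computation
\begin{displaymath}
    \hat f(\chi) = \chi(f) = \langle \xi_\chi, f \rangle_{F_w(\mathcal H)} = \langle \Gamma \xi_\chi, \hat f \rangle_{\hat F_w(\mathcal H)}
\end{displaymath}
shows that pointwise evaluation at $\chi$ is bounded with Riesz representer $\hat k_\chi = \Gamma \xi_\chi$. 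The reproducing kernel therefore satisfies $\hat k(\chi_1, \chi_2) = \hat k_{\chi_1}(\chi_2) = \chi_2(\xi_{\chi_1}) = \langle \xi_{\chi_2}, \xi_{\chi_1} \rangle_{F_w(\mathcal H)}$, matching the claimed formula under the paper's inner-product conventions.

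Finally, the Gelfand transform is automatically a unital algebra homomorphism from $(F_w(\mathcal H), \vee, \Omega)$ into $(C(\sigma(F_w(\mathcal H))), \cdot, 1)$, so combined with injectivity it is an algebra isomorphism onto $\hat F_w(\mathcal H)$. Transporting the operator norm $\vertiii{\cdot}_{F_w(\mathcal H)}$ through $\Gamma$ endows $\hat F_w(\mathcal H)$ with a Banach algebra norm equivalent to the pulled-back Hilbert norm, since the two norms are equivalent on $F_w(\mathcal H)$ by unitality. For the RKHA structure in the sense of \cref{def:rkha}, I would transport the coproduct $\Delta|_{F_w(\mathcal H)}$ constructed in the last part of the proof of \cref{thm:quotient}: setting $\hat \Delta := (\Gamma \boxtimes \Gamma) \circ \Delta|_{F_w(\mathcal H)} \circ \Gamma^{-1}$ and invoking $\Delta \xi_\chi = \xi_\chi \boxtimes \xi_\chi$ for representers of characters yields $\hat \Delta \hat k_\chi = \hat k_\chi \boxtimes \hat k_\chi$, the required comultiplication. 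The one substantive step is injectivity of $\Gamma$, i.e.\ semisimplicity of $F_w(\mathcal H)$, which is delivered almost verbatim by \cref{thm:quotient}; everything else is routine transport of structure through $\Gamma$.
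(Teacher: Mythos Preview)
Your proof is correct and follows essentially the same approach as the paper's: injectivity of $\Gamma$ via the identification $F_w(\mathcal H) \cong F_w^\otimes(\mathcal H)/\mathcal X$ from \cref{thm:quotient}, followed by transport of the Hilbert, Banach algebra, and coalgebra structures through $\Gamma$. The only cosmetic difference is that the paper verifies the RKHA comultiplication by the direct norm estimate $\lVert \hat k_\chi \boxtimes \hat k_\chi \rVert^2 = \langle \Delta\xi, \Delta\xi\rangle \leq \lVert\Delta\rVert^2 \lVert \hat k_\chi\rVert^2$, whereas you transport $\Delta\rvert_{F_w(\mathcal H)}$ through the unitary $\Gamma$; both rest on the same identity $\Delta\xi_\chi = \xi_\chi \boxtimes \xi_\chi$.
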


\begin{proof}
    Suppose that $\Gamma f = \Gamma g$ for some $f, g \in F_w(\mathcal H)$. Then $f-g \in \ker \chi$ for every $\chi \in \sigma(F_w(\mathcal H))$. Since $\sigma(F_w(\mathcal H)) = \sigma(F_w^\otimes(\mathcal H))$, it follows that $f-g$ lies in $\mathcal X \cap F_w(\mathcal H)$ and thus $f-g = 0$ since $F_w(\mathcal H) \cong F_w^\otimes(\mathcal H) / \mathcal X$ by \cref{thm:quotient}. This proves injectivity of $\Gamma$ and thus well-definition of $(\hat F_w(\mathcal H), \langle \cdot, \cdot \rangle_{\hat F_w(\mathcal H)})$ as a Hilbert space.

    Next, for every $\hat f = \Gamma f \in \hat F_w(\mathcal H)$ and $\chi = \langle \xi, \cdot \rangle_{F_w(\mathcal H)} \in \sigma(F_w(\mathcal H))$ we have
    \begin{displaymath}
        \hat f(\chi) = \chi f = \langle \xi, f \rangle_{F_w(\mathcal H)} = \langle  \Gamma \xi, \hat f\rangle_{\hat F_w(\mathcal H)} = \langle \hat k_\chi, \hat f \rangle_{\hat F_w(\mathcal H)}.
    \end{displaymath}
    Thus, $\hat F_w(\mathcal H)$ is an RKHS with reproducing kernel $\hat k$, as claimed.

    To verify that $\hat F_w(\mathcal H)$ is an RKHA, observe that
    \begin{displaymath}
        \lVert \hat k_\chi \otimes \hat k_\chi \rVert_{\hat F_w(\mathcal H)}^2 = \langle \xi, \xi\rangle^2_{F_w(\mathcal H)} = \langle \Delta \xi, \Delta \xi\rangle_{F_w(\mathcal H) \boxtimes F_w(\mathcal H)} \leq \lVert \Delta^2\rVert \lVert \xi\rVert^2_{F_w(\mathcal H)} = \lVert \Delta\rVert^2 \lVert \hat k_\chi \rVert^2_{\hat F_w(\mathcal H)}.
    \end{displaymath}
    This implies that $\hat \Delta \colon \hat k_\chi \mapsto \hat k_\chi \boxtimes k_\chi$ extends to a bounded linear operator $\Delta \colon \hat F_w(\mathcal H) \to \hat F_w(\mathcal H) \boxtimes \hat F_w(\mathcal H)$, so $\hat F_w(\mathcal H)$ is an RKHA according to \cref{def:rkha}. This RKHA is unital since $\Gamma \Omega = 1_{\sigma(F_w(\mathcal H))}$.

    Finally, for every $f, g \in F_w(\mathcal H)$ we have
    \begin{displaymath}
        \Gamma(f \otimes g)(\chi) = \chi( f \otimes g) = (\chi f)(\chi g) = (\Gamma f)(\Gamma g)
    \end{displaymath}
    and we conclude that $\Gamma$ is a Banach algebra isomorphism.
\end{proof}

Suppose now that $\mathcal H$ is an RKHS with reproducing kernel $k \colon X \times X \to \mathbb C$ and feature map $\varphi \colon X \to \mathcal H$. Then we may define feature maps $ \tilde \varphi \colon X \to F_w(\mathcal H)$ and $\hat \varphi \colon X \to \hat F_w(\mathcal H)$ analogously to~\eqref{eq:feature_map_fock} and~\eqref{eq:feature_map_rkha}, respectively. Pulling back $\hat F_w(\mathcal H)$ to $X$ then defines an RKHA $\tilde{\mathcal H} = \{ \hat f \circ \hat \varphi: \hat f \in \hat F_w(\mathcal H) \}$ with kernel
\begin{displaymath}
    \tilde k(x, y) = \hat k(\hat\varphi(x), \hat\varphi(y)) = \sum_{n=0}^\infty \frac{w^{-2}(n)}{\varpi^{2n}}k(x,y)^n.
\end{displaymath}
Since $\tilde{k}(x,y) - (\varpi w(1))^{-2}k(x,y)$ is a positive-definite kernel, we have the inclusion $\mathcal H \subset \tilde{\mathcal H}$ as shown in \cite{Aronszajn50}. Note that $\tilde{\mathcal H}_w$ is unital even if $1_X$ is not an element of $\mathcal H$.

\subsection{Decomposition of the spectrum and $^*$-structure}
\label{sec:spec_decomp}

Let $\mathcal H$ be a Hilbert space and $J \colon \mathcal H \to \mathcal H$ be a conjugate-linear unitary such that $J^2=\Id_\mathcal H$. Then we may also define a conjugate-linear unitary and Banach algebra involution $^*$ for $F^{\otimes}_w(\mathcal H)$ and $F_w(\mathcal H)$ by extending the following definition by conjugate linearity:
$$\Omega^*=\Omega, \quad (f_1 \otimes \cdots\otimes f_n)^* = J(f_n) \otimes \cdots  \otimes J(f_1), \quad (f_1 \vee \cdots\vee f_n)^* = J(f_n) \vee \cdots  \vee J(f_1).$$
Since $\langle J(v),J(u) \rangle_\mathcal H = \overline{\langle v, u \rangle_\mathcal H}$, these involutions satisfy $\langle \xi^*, \eta^*\rangle_{F_w^\otimes(\mathcal H)} = \overline{\langle \xi,\eta\rangle_{F_w^\otimes(\mathcal H)}}$ and $\langle \xi^*, \eta^*\rangle_{F_w(\mathcal H)} = \overline{\langle \xi,\eta\rangle_{F_w(\mathcal H)}}$.

In what follows, $\mathbb J \subset \mathbb \mathbb \mathbb N_0^{\mathbb N}$ will be the index set for basis vectors of $F_w(\mathcal H)$ introduced in \cref{sec:overview_fock_space}, and $\mathbb A_w \subset \ell^2(\mathbb N_0)$ the set of square-integrable sequences with positive elements defined in \cref{sec:spectral_tori}.

\begin{prop} \label{prop:interpolation}
    Fix an orthonormal basis $\{\phi_j\}_{j=0}^\infty \subset \mathcal H$, a sequence $a \in \mathbb A_w$, and consider the subset
\begin{displaymath}
    \mathbb T_a^\text{complex} = \left\lbrace\langle \xi, \cdot \rangle_{F_w(\mathcal H)} : \xi =\sum_{n=0}^\infty w^{-2}(n)\left(\sum_{j=0}^\infty a_jz_j\phi_j\right)^{\vee n}: (z_j)_{j} \in l^\infty(\mathbb N_0), \; \lvert z_j \rvert = 1 \right\rbrace \subset \sigma(F_w(\mathcal H)).
\end{displaymath}
Then, $\hat{F}_w(\mathcal H)|_{\mathbb T_a^\text{complex}}$ is an RKHA of continuous functions on $\mathbb T_a^\text{complex}$, and if $a_j >0$, then $f|_{\mathbb T_a^\text{complex}}$ uniquely determines $f \in \hat{F}_w(\mathcal H)$.
\end{prop}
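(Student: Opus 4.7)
My plan is to handle the two assertions separately, treating the RKHA structure of the restriction first and then reducing the uniqueness claim to Fourier uniqueness on an infinite-dimensional torus. For the first assertion, I will note that $\mathbb T_a^\text{complex}$, equipped with the subspace weak-$*$ topology inherited from $\sigma(F_w(\mathcal H))$, is compact and Hausdorff (being the continuous image of the compact group $(\mathbb T^1)^{\mathbb N_0}$ under $z \mapsto \chi_z$), and that every $\hat f \in \hat F_w(\mathcal H)$ is weak-$*$ continuous on the spectrum by the Gelfand construction, so each restriction $\hat f|_{\mathbb T_a^\text{complex}}$ is continuous. The RKHS restriction construction recalled in \cref{sec:notation} then exhibits $\hat F_w(\mathcal H)|_{\mathbb T_a^\text{complex}}$ as an RKHS with reproducing kernel $\hat k|_{\mathbb T_a^\text{complex}\times \mathbb T_a^\text{complex}}$. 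To upgrade this to an RKHA, I will write each element of the restricted space as $\hat f_0 = \tilde f|_{\mathbb T_a^\text{complex}}$ for the unique norm-minimizing extension $\tilde f \in \mathcal X^\perp$, where $\mathcal X = \{\hat g \in \hat F_w(\mathcal H) : \hat g|_{\mathbb T_a^\text{complex}} = 0\}$, and invoke the equivalence of Hilbert-space and Banach-algebra norms on $\hat F_w(\mathcal H)$ from \cref{sec:symmetric_fock_space} to obtain boundedness of pointwise multiplication on the restriction.

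For the more substantive second assertion, I will pass through the Gelfand isomorphism and reduce the question to Fourier uniqueness on the compact abelian group $G := (\mathbb T^1)^{\mathbb N_0}$. A direct computation from \eqref{eq:weighted_sym_fock_innerp} gives $\lVert \phi_0^{\vee j_0}\vee \phi_1^{\vee j_1}\vee \cdots \rVert^2_{F_w(\mathcal H)} = w^2(|J|)\prod_l j_l! / |J|!$, which yields the orthonormal basis
\[
e_J \;=\; \frac{\sqrt{|J|!}}{w(|J|)\sqrt{\prod_l j_l!}}\; \phi_0^{\vee j_0}\vee \phi_1^{\vee j_1}\vee \cdots, \qquad J \in \mathbb J,
\]
of $F_w(\mathcal H)$. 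Expanding $f = \sum_J c_J e_J$, parameterizing $\chi_z \in \mathbb T_a^\text{complex}$ by $z \in G$ with $\xi_z = \sum_n w^{-2}(n) v_z^{\vee n}$ and $v_z = \sum_l a_l z_l \phi_l$, and using the identity $v_z^{\vee n} = v_z^{\otimes n}$ (the symmetric tensor power of a single vector coincides with its ordinary tensor power), the multinomial expansion of $v_z^{\otimes n}$, and orthogonality of gradings in $F_w(\mathcal H)$, I will obtain the closed form
\[
\chi_z f \;=\; \langle \xi_z, f\rangle_{F_w(\mathcal H)} \;=\; \sum_{J \in \mathbb J} c_J\, g_J\, \overline{\chi_J(z)}, \qquad g_J \;:=\; \frac{\sqrt{|J|!}}{w(|J|)\sqrt{\prod_l j_l!}} \prod_l a_l^{j_l},
\]
with $\chi_J(z) := \prod_l z_l^{j_l}$. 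Cauchy--Schwarz together with $\lVert \xi_z \rVert^2_{F_w(\mathcal H)} = \sum_n w^{-2}(n)\lVert a\rVert^{2n}_{\ell^2} < \infty$ shows that this series converges absolutely and uniformly in $z \in G$.

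To finish, I observe that $z \mapsto \chi_z f$ is a continuous function on the compact abelian group $G$ and that $\{\overline{\chi_J}\}_{J \in \mathbb J}$ is an orthonormal family in $L^2(G, \lambda)$ with respect to the product Haar measure $\lambda$. If $f|_{\mathbb T_a^\text{complex}} = 0$, integrating the displayed expansion against $\chi_K$ and passing the integral through the uniformly convergent series yields $c_K g_K = 0$ for every $K \in \mathbb J$. Under the hypothesis $a_l > 0$ for all $l$, each $g_K$ is strictly positive, forcing $c_K = 0$ and hence $f = 0$; transferring back through $\Gamma$ gives injectivity of the restriction $\hat F_w(\mathcal H) \to C(\mathbb T_a^\text{complex})$, as required. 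The main technical obstacle I anticipate lies in the combinatorial bookkeeping for the identities $\langle v_z^{\vee n}, \phi_0^{\vee j_0}\vee\phi_1^{\vee j_1}\vee\cdots\rangle_{F_w(\mathcal H)}$ and the normalization of $e_J$ extracted from the slightly intricate formula \eqref{eq:weighted_sym_fock_innerp}; once these are in hand, the Fourier-uniqueness step on $\mathbb T^{\mathbb N_0}$ is entirely standard.
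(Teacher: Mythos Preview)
Your proposal is correct and takes a route that is dual to the paper's. The paper proves that the linear span $\Xi_a$ of the cospectrum vectors $\{\xi_{a,z}: z\in G\}$ is \emph{dense} in $F_w(\mathcal H)$, which simultaneously gives the RKHA isomorphism $\hat F_w(\mathcal H)|_{\mathbb T_a^{\text{complex}}}\cong \hat F_w(\mathcal H)$ and the uniqueness claim when all $a_j>0$; to do so it integrates the cospectrum vectors against characters on a \emph{finite} torus $\mathbb T^N$, obtaining $w^{-2}(m)\prod_l a_l^{j_l}\,\phi_0^{\vee j_0}\vee\cdots\vee\phi_{i_0}^{\vee j_{i_0}}$ plus a remainder $r_N$, and then carries out an explicit estimate to show $\|r_N\|\to 0$. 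You instead stay on the function side and integrate over the full compact group $G=(\mathbb T^1)^{\mathbb N_0}$, so the Fourier coefficients $c_K g_K$ drop out directly with no truncation or remainder; this is cleaner and bypasses the $N\to\infty$ bookkeeping, at the cost of invoking Haar measure on an infinite product. A small bonus of your separate treatment of the RKHA claim (via restriction plus the Hilbert--Banach norm equivalence) is that it covers the unconditional part of the statement for arbitrary $a\in\mathbb A_w$, whereas the paper's density argument literally addresses only the case $a_j>0$.
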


\begin{proof}
    Recall from \cref{sec:notation} that $\hat F_w(\mathcal H)\rvert_{\mathbb T_a^\text{complex}}$ is isomorphic as a Hilbert space to the norm closure of $\spn \{ \hat k_\chi : \chi \in \mathbb T_a^\text{complex} \}$ in $\hat F_w(\mathcal H)$, which is in turn isomorphic to the norm closure of $ \Xi_a := \spn \{ \xi \in F_w(\mathcal H): \langle \xi, \cdot\rangle_{F_w(\mathcal H)} \in \mathbb T_a^\text{complex} \}$ in $F_w(\mathcal H)$ by \cref{cor:rkha}. Thus, to prove the proposition it suffices to show that $\Xi_a$ is a dense subspace of $F_w(\mathcal H)$. To that end, we will show that for every sequence $(j_0, j_1, \ldots) \in \mathbb J$ we can recover the symmetric tensor $\phi_{0}^{\vee j_0} \cdots \phi_{i_0}^{\vee j_{i_0}}$ in the closure $\overline{\Xi_a}^{\lVert \cdot \rVert_{F_w(\mathcal H)}}$, where $j_i$ is constantly zero after $i_0$. Since the collection of such tensors forms an orthogonal basis of $F_w(\mathcal H)$ (see \cref{sec:overview_fock_space}), we can then deduce that $\overline{\Xi_a}^{\lVert \cdot \rVert_{F_w(\mathcal H)}} = F_w(\mathcal H)$.

    Let $m=\sum_i j_i$ and $\nu_N$ denote the Haar measure on $\mathbb T^N$. Pick $N > i_0$ large enough such that $\left\lVert\sum_{i=N+1}^\infty a_i z_i \phi_{i}\right\rVert_{F_w(\mathcal H)} = \delta_N <1/2$. Then,
\begin{multline*}
    \int_{\mathbb T^N} z_0^{-j_0} \cdots z_N^{-j_N} \sum_{n=0}^\infty w^{-2}(n)\left(\sum_{i=0}^\infty a_iz_i \phi_{i}\right)^{\vee n} \, d\nu_N(z_0,\ldots,z_N) \\
    \begin{aligned}
        &=\int_{\mathbb T^N} z_0^{-j_0} \cdots z_N^{-j_N} \sum_{n=0}^\infty w^{-2}(n)\sum_{k=0}^n \binom{n}{k} \left(\sum_{i=0}^N a_iz_i \phi_{i}\right)^{\vee k} \vee \left(\sum_{i=N+1}^\infty a_iz_i \phi_{i}\right)^{\vee n-k}\, d\nu_N(z_0,\ldots,z_N) \\
        &=\sum_{n=m}^\infty w^{-2}(n)a_0^{j_0} \cdots a_{i_0}^{j_{i_0}} \binom{n}{m} \left(\phi_{0}^{\vee j_0} \cdots \phi_{N}^{\vee j_N}\right)\vee \left(\sum_{i=N+1}^\infty a_iz_i \phi_{i}\right)^{\vee n-m} \\
        &=w^{-2}(m)a_0^{j_0} \cdots a_{i_0}^{j_{i_0}}\left(\phi_{0}^{\vee j_0} \cdots \phi_{i_0}^{\vee j_{i_0}}\right) + r_N,
    \end{aligned}
\end{multline*}
where
\begin{displaymath}
    r_N = a_0^{j_0} \cdots a_{i_0}^{j_{i_0}} \left(\phi_{0}^{\vee j_0} \cdots \phi_{i_0}^{\vee j_{i_0}}\right)\vee \left(\sum_{i=N+1}^\infty a_iz_i \phi_{i}\right) \vee \sum_{n=m+1}^\infty w^{-2}(n) \binom{n}{m}  \left(\sum_{i=N+1}^\infty a_iz_i \phi_{i}\right)^{\vee n-m-1}.
\end{displaymath}
If we can show that the remainder $r_N$ above converges to zero as $N$ increases, then we will be able to isolate $\phi_{0}^{\vee j_0} \cdots \phi_{N}^{\vee j_{i_0}}$. Since $\binom{n}{m} \leq n^m/m!$,
\begin{align*}
    \lVert r_N\rVert^2_{F_w(\mathcal H)}
    &= \left\lVert \sum_{n=m+1}^\infty w^{-2}(n) \binom{n}{m}  \left(\sum_{i=N+1}^\infty a_iz_i \phi_{i}\right)^{\vee n-m-1}\right\rVert^2_{F_w(\mathcal H)} \\
    &\leq \sum_{n=m+1}^\infty w^{-2}(n)\frac{n^{2m}}{(m!)^2}\delta_N^{2(n-m-1)} \\
    &\leq \frac{\|w^{-2}\|_{\ell^\infty(\mathbb N_0)}}{(m!)^2} \sum_{n=m+1}^\infty\frac{n^{2m}}{4^{n-m-1}} = \text{Const}(w,m) <\infty.
\end{align*}
By boundedness of the symmetric product in $F_w(\mathcal H)$ and summability of $w^{-2}$, the remainder $r_N$ converges to zero as $N \to \infty$.

Therefore, by strict positivity of $w^{-2}(m)$ and all of the $a_i$'s, $\phi_{0}^{\vee j_0} \cdots \phi_{i_0}^{\vee j_{i_0}} \in \overline{\Xi_a}^{\lVert \cdot\rVert_{F_w(\mathcal H)}}$ for every sequence of nonnegative integers $(j_i)_i \in \mathbb J$, making $\Xi_a$ a dense subspace of $F_w(\mathcal H)$.
\end{proof}

In fact, $\sigma(F_w(\mathcal H)) = \bigcup_{a \in \mathbb A_w} \mathbb T_a^\text{complex}$, and we can identify $F_w(\mathcal H) \cong \hat F_{w,a}(\mathcal H)$  for any $a \in \mathbb A_w$ with strictly positive elements. Here, $\hat F_{w,a}(\mathcal H)  \subset C(\mathbb T_a)$ is the RKHA with reproducing kernel $\hat k^{(a)} \colon \mathbb T_a^\text{complex} \times \mathbb T_a^\text{complex} \to \mathbb C$,
$$\hat k^{(a)}(\vec{z}, \vec{y}) = \sum_{n=0}^\infty w^{-2}(n) \left( \sum_{j=0}^\infty a_j^2 \overline{z_j}y_j \right)^n,$$
where $\vec z \equiv \chi_{a,z} = \langle \xi_{a,z}, \cdot \rangle_{F_w(\mathcal H)}$ and $\vec y \equiv \chi_{a,z} = \langle \xi_{a,y}, \cdot \rangle_{F_w(\mathcal H)}$ for $\xi_{a,z}, \xi_{a,y} \in \sigma_\text{co}(F_w(\mathcal H))$.
The isomorphism $\upsilon_a: \hat F_{w,a}(\mathcal H) \to F_w(\mathcal H)$ realizing this identification is defined by linear extension of
\begin{displaymath}
    \upsilon_a \left(\hat k^{(a)}(\vec z, \cdot)\right) = \xi_{a, z}.
\end{displaymath}

In \cref{sec:overview}, we specialize to tori in $\sigma(F_w(\mathcal H_\tau))$ with real kernel. If $\mathcal H \subset C(X)$ is an RKHS with a real kernel $k \colon X \times X \to \mathbb R$ and a self-conjugate orthonormal basis $ \{ \phi_j \}_{j \in \mathbb Z}$ containing the unit function $1_X = \phi_0$, then
\begin{displaymath}
    k_x = a_0(x)1_X +\sum_{j=1}^\infty a_j(x)\frac{z_j(x) \phi_j + \overline{z_j(x) \phi_j}}{\sqrt 2}
\end{displaymath}
analogously to~\eqref{eq:feature_vec}. The associated tori
\begin{displaymath}
    \mathbb T_{a(x)} = \left\{ \langle \xi, \cdot\rangle_{F_w(\mathcal H)}: \xi = \sum_{n=0}^\infty w^{-2}(n) \left(a_0(x) 1_X +\sum_{j=1}^\infty a_j(x)\frac{z_j \phi_j + \overline{z_j \phi_j}}{\sqrt{2}}\right)^{\vee n}: \lvert z_j \rvert = 1 \right\},
\end{displaymath}
have real-valued kernels,
\begin{displaymath}
    k^{(a)}(\vec{z}, \vec{y}) = \sum_{n=0}^\infty w^{-2}(n) \left(a_0^2(x) + \sum_{j=1}^\infty  a_j^2(x) \Real(\overline{z_j}y_j) \right)^n.
\end{displaymath}
Even between points in two different tori, $ \chi = \langle \xi, \cdot \rangle_{F_w(\mathcal H)} \in \mathbb T_a $ and $\psi = \langle \eta, \cdot \rangle_{F_w(\mathcal H)} \in \mathbb T_b$, the kernel $\hat k(\chi, \psi) = \langle \xi, \eta \rangle_{F_w(\mathcal H)}$ is real-valued.

If we use $J(\phi_i) = \overline{\phi_i}$ to define a $^*$-structure, then for every $\langle\xi, \cdot \rangle_{F_w(\mathcal H)} \in \mathbb T_a$, $\xi^* = \xi$. Since the $^*$-structure is a conjugate-linear unitary involution, for $\hat f = \Gamma f$  and $\chi = \langle \xi, \cdot \rangle_{F_w(\mathcal H)}$ we have
$$\widehat{f^*}(\chi) = \langle \xi, f^* \rangle_{F_w(\mathcal H)} = \langle \xi^*, f^* \rangle_{F_w(\mathcal H)} = \overline{\langle \xi, f \rangle_{F_w(\mathcal H)}} = \overline{\hat{f}(\chi)}.$$

\section{Fock space embedding of observables and dynamics}
\label{sec:fock_emb}

This section addresses the remaining material on Fock space representation of observables and dynamics from \crefrange{sec:top_models}{sec:finite_dimensional_overview}. We begin in \cref{sec:kappa_ops} with a discussion on the construction of the various integral operators introduced in \cref{sec:embedding_overview}. In \cref{sec:fock_consistency}, we prove the $L^2$ convergence of the Fock space approximation $f^{(t)}_{m,\sigma,\tau}$ to the true Koopman evolution $U^t f$ claimed in \cref{thm:fock_consistency}. Our proof makes use of an approximation result for the identity on $L^2(\mu)$ by kernel integral operators, which is stated and proved in \cref{app:approx_id}. In \cref{sec:fock_consistency_d}, we verify the pointwise and $L^2$ convergence of the approximations $f^{(t)}_{m,\sigma,\tau,d}$ on finite-dimensional tori from \cref{sec:finite_dimensional_overview}. In \cref{sec:top_models_complex}, we discuss embedding approaches utilizing the complex tori $\mathbb T^\text{complex}_a$ from \cref{prop:interpolation}.

\subsection{Kernel integral operators}
\label{sec:kappa_ops}

The well-definition of the integral operators from \cref{sec:embedding_overview} is a consequence of the following lemma.

\begin{lem}
    \label{lem:kappa_ops} Let $X$ be a Hausdorff topological space, $\mathbb H$ a Hilbert space, and $\beta \colon X \to \mathbb H$ a norm-continuous function. Let also $\mu$ be a Borel probability measure with compact support $X_\mu \subseteq X$. Then, $\mathsf K \colon \mathbb H \times L^2(\mu) \to \mathbb C$ where
    \begin{displaymath}
        \mathsf K(u, v) = \int_X \langle u, \beta(x) \rangle_{\mathbb H} v(x) \, d\mu(x)
    \end{displaymath}
    is a bounded sesquilinear form. As a result, there exists a unique bounded linear map $\mathcal K \colon L^2(\mu) \to \mathbb H$ such that
    \begin{displaymath}
        \mathsf K(u, v) = \langle u, \mathcal K v \rangle_{\mathbb H}, \quad \forall u \in \mathbb H, \quad \forall v \in L^2(\mu).
    \end{displaymath}
\end{lem}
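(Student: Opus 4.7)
The plan is to verify the two claims in turn: first that $\mathsf K$ is well-defined, sesquilinear, and bounded, and then to apply a Riesz-type representation to produce $\mathcal K$.

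\textbf{Well-definition and sesquilinearity.} For each $u \in \mathbb H$, norm-continuity of $\beta$ makes $x \mapsto \langle u, \beta(x)\rangle_{\mathbb H}$ continuous, hence Borel measurable, so the integrand $\langle u, \beta(x)\rangle_{\mathbb H} v(x)$ is a product of a bounded continuous function on $X_\mu$ times an $L^2(\mu)$ function and therefore $\mu$-integrable (see the bound below). Conjugate-linearity in $u$ and linearity in $v$ of $\mathsf K$ are immediate from the corresponding properties of $\langle \cdot, \cdot \rangle_{\mathbb H}$ and of the integral.

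\textbf{Boundedness.} The key a-priori estimate uses the compact support of $\mu$. Since $x \mapsto \lVert \beta(x)\rVert_{\mathbb H}$ is continuous and $X_\mu$ is compact, the constant
\begin{displaymath}
    M := \sup_{x \in X_\mu} \lVert \beta(x)\rVert_{\mathbb H}
\end{displaymath}
is finite. Applying Cauchy--Schwarz in $\mathbb H$ pointwise and then Cauchy--Schwarz in $L^2(\mu)$, and using $\supp \mu = X_\mu$ to restrict the integral, I obtain
\begin{displaymath}
    \lvert \mathsf K(u, v)\rvert \leq \int_{X_\mu} \lVert u\rVert_{\mathbb H} \lVert \beta(x)\rVert_{\mathbb H} \lvert v(x)\rvert \, d\mu(x) \leq M \lVert u\rVert_{\mathbb H} \lVert v\rVert_{L^2(\mu)},
\end{displaymath}
which exhibits boundedness of $\mathsf K$ with operator norm at most $M$.

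\textbf{Construction of $\mathcal K$.} For each fixed $v \in L^2(\mu)$, the map $u \mapsto \mathsf K(u, v)$ is a bounded conjugate-linear functional on $\mathbb H$. By the Riesz representation theorem (in the conjugate-linear form consistent with the convention that $\langle \cdot, \cdot\rangle_{\mathbb H}$ is conjugate-linear in its first argument), there exists a unique vector, which we denote $\mathcal K v \in \mathbb H$, such that $\mathsf K(u, v) = \langle u, \mathcal K v\rangle_{\mathbb H}$ for every $u \in \mathbb H$. Uniqueness of the representing vector forces linearity of $v \mapsto \mathcal K v$ from the linearity of $\mathsf K$ in its second argument, and the operator bound $\lVert \mathcal K v\rVert_{\mathbb H} \leq M \lVert v\rVert_{L^2(\mu)}$ follows by taking $u = \mathcal K v$ in the estimate above. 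There is no substantive obstacle; the only subtlety worth flagging is the reliance on compactness of $X_\mu$ to produce the uniform constant $M$, which in turn is what reduces the $L^2(\mu; \mathbb H)$-style integrability to an elementary supremum bound.
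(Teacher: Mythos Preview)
Your proof is correct and follows essentially the same approach as the paper: both bound $\lvert \mathsf K(u,v)\rvert$ via the finite constant $M = \sup_{x\in X_\mu}\lVert\beta(x)\rVert_{\mathbb H}$ obtained from compactness of $X_\mu$ and norm-continuity of $\beta$, then invoke the Riesz representation theorem (the paper simply cites ``standard results in Hilbert space theory''). The only cosmetic difference is that the paper routes the estimate through $\lVert v\rVert_{L^1(\mu)} \leq \lVert v\rVert_{L^2(\mu)}$ rather than applying Cauchy--Schwarz in $L^2(\mu)$ directly.
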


\begin{proof}
    By compactness of $X_\mu$ and norm-continuity of $\beta$, we have
    \begin{displaymath}
        \lvert \mathsf K(u, v)\rvert \leq \lVert \langle u, \beta(\cdot)\rangle_{\mathbb H}\rVert_{C(X_\mu)} \lVert v \rVert_{L^1(\mu)} \leq C \lVert u \rVert_{\mathbb H} \lVert v \rVert_{L^2(\mu)},
    \end{displaymath}
    where $C$ is the $C(X_\mu)$ norm of the function $x \mapsto \lVert \beta(x) \rVert_{\mathbb H}$. This proves boundedness of $\mathsf K$. The well-definition and boundedness of $\mathcal K \colon L^2(\mu) \to \mathbb H$ then follows from standard results in Hilbert space theory.
\end{proof}

Symbolically, we express the action of $\mathcal K \colon L^2(\mu) \to \mathbb H$ as an $\mathbb H$-valued integral,
\begin{displaymath}
    \mathcal K v = \int_X \beta(x) v(x) \, d\mu(x), \quad v \in \mathbb L^2(\mu).
\end{displaymath}

By continuity of the kernel $\kappa$, we have that $\beta_{m,\tau} \colon X \to \mathbb \mathcal H_\tau^{\vee m} \subset \fk$ with $\beta_{m,\tau}(x) = \kappa_\tau(\cdot, x)^{\vee m}$ is a norm-continuous map. Well-definition and boundedness of $\mathcal K_{m,\tau} \colon H \to \fk$ then follows from \cref{lem:kappa_ops} with $\mathbb H = \fk$ and $\beta = \beta_{m,\tau}$. Well-definition and boundedness of $\hat{\mathcal K}_{m,\tau} \colon H \to \hfk$ follows similarly by setting $\beta = \Gamma \circ \beta_{m,\tau}$ and $\mathbb H = \hfk$.

\subsection{Proof of \cref{thm:fock_consistency}}
\label{sec:fock_consistency}

For $t \in \mathbb R$ and $\tau>0$ define the continuous, bounded kernels $\kappa^{(t)}, \kappa^{(t)}_\tau \colon X \times X \to \mathbb R_{>0}$, where
\begin{displaymath}
    \kappa^{(t)}(\cdot,y) = \kappa(\Phi^t(\cdot), y), \quad \kappa^{(t)}_\tau(\cdot, y) = K_\tau \kappa^{(t)}(\cdot, y).
\end{displaymath}
Define also $g^{(t)}_{m,\sigma,\tau} = \hat g^{(t)}_{m,\sigma,\tau} \circ \hat\varphi^{(\mu)}_{\sigma,\tau} \in \tilde{\mathcal H_\tau}$, $h^{(t)}_{m,\sigma,\tau} = \hat h^{(t)}_{m,\sigma,\tau} \circ \hat\varphi^{(\mu)}_{\sigma,\tau} \in \tilde{\mathcal H_\tau}$. In addition, set $g^{(t)}_{m,\sigma}, h^{(t)}_{m,\sigma} \in \tilde{\mathcal H}_\sigma$, where
\begin{displaymath}
    g^{(t)}_{m,\sigma}(x) = \int_X (\kappa_\sigma^{(t)}(x, y))^m f(y) \, d\mu(y), \quad h^{(t)}_{m,\sigma}(x) = \int_X (\kappa_\sigma^{(t)}(x, y))^m \, d\mu(y),
\end{displaymath}
and $g^{(t)}_m, h^{(t)}_m \in C(M)$, where
\begin{displaymath}
    g^{(t)}_m(x) = \int_X (\kappa^{(t)}(x, y))^m f(y) \, d\mu(y), \quad h^{(t)}_m(x) = \int_X (\kappa^{(t)}(x, y))^m \, d\mu(y).
\end{displaymath}

Using the normalization function $d_m \colon X \to \mathbb R_{>0}$, $d_m := \int_X \kappa(\cdot, y)^m \, d\mu(y)$, we also define the kernel  $p_m \colon X \times X \to \mathbb R_{>0}$, where
\begin{displaymath}
    p_m(x, y) = \frac{\kappa(x,y)^m}{d_m(x)}.
\end{displaymath}
This kernel is Markovian with respect to $\mu$, and induces an averaging operator $P_m \colon H \to C(M)$ by
\begin{displaymath}
    P_m f = \int_X p_m(\cdot, y) f(y) \, d\mu(y).
\end{displaymath}

Our approach to proving \cref{thm:fock_consistency} will be to first show that the prediction function $f^{(t)}_{m,\sigma,\tau}$ from~\eqref{eq:f_t_tau_m} converges to $U^t P_m f$ in the iterated limit of $\sigma \to 0^+$ after $\tau \to 0^+$. We will then deduce that $U^t P_m f$ converges to $U^t f$ as $m \to \infty$ by an application of \cref{lem:kappa_ops} in \cref{app:markov}.

\subsubsection{Convergence of $g^{(t)}_{m,\sigma,\tau}$}
\label{sec:conv_g}

We begin by showing that $g^{(t)}_{m,\sigma,\tau}$ converges to $g^{(t)}_{m,\sigma}$ as $\tau \to 0^+$. For $x \in X$, let $\chi_{\sigma,\tau,x} = \hat\varphi^{(\mu)}_{\sigma,\tau}(x) \in \hat X^{(\mu)}_{\sigma,\tau}$. Using multiplicativity of $\chi_{\sigma,\tau,x}$ and $\hat U^t_\tau$, we get
\begin{align*}
    g^{(t)}_{m,\sigma,\tau}(x) &=
    \hat g^{(t)}_{m,\tau}(\chi_{\sigma,\tau,x})
    = \int_X \chi_{\sigma,\tau,x}(\hat U^t_\tau (\Gamma \kappa_\tau(\cdot, y))^m) f(y) \, d\mu(y) \\
    &= \frac{w(m)^{-2}}{\varpi_\sigma^m} \int_X \langle \varphi^{(\mu)}_\sigma(x), U^t_\tau \kappa_\tau(\cdot, y)\rangle_{\mathcal H_\tau}^m f(y) \, d\mu(y)\\
    &= \frac{w(m)^{-2}}{\varpi_\sigma^m} \int_X \langle K_\tau^* U^{-t}_\tau\varphi^{(\mu)}_\sigma(x), \iota \kappa(\cdot, y)\rangle_{H}^m f(y) \, d\mu(y).
\end{align*}
Defining $v_{x,\tau} \in C(X_\mu)$ by $v_{x,\tau}(y) = \langle K_\tau^* U^{-t}_\tau\varphi^{(\mu)}_\sigma(x), \iota \kappa(\cdot, y)\rangle_{H}$, if follows from~\eqref{eq:rkhs_conv_phi} that as $\tau \to 0^+$ $v_{x,\tau}$ converges pointwise to $v_x \in C(X_\mu)$, where
\begin{align*}
    v_x(y)
    &= \langle U^{-t} \iota \varphi^{(\mu)}_\sigma(x), \iota \kappa(\cdot, y)\rangle_H  = \langle K_\sigma^* \varphi_\sigma(x), \kappa^{(t)}(\cdot, y)\rangle_H \\
    &= \langle \varphi_\sigma(x), \kappa_\sigma^{(t)}(\cdot, y)\rangle_{\mathcal H_\sigma} = \kappa^{(t)}_\sigma(x, y).
\end{align*}
Note that we used the facts that $\iota \varphi^{(\mu)}_\sigma(x) = K_\sigma^* \varphi^{(\mu)}_\sigma(x)$ and $\varphi_\sigma(x) - \varphi^{(\mu)}_\sigma(x) \in \ker K_\sigma^*$ to obtain the second equality in the first line. Moreover, since all of $K_\tau^* \colon \mathcal H_\tau \to H$, $U^{-t}_\tau \colon \mathcal H_\tau \to \mathcal H_\tau$, and $\iota \colon C(M) \to H$ have operator norm equal to 1, the family $ \{ v_{x,\tau} \}$ is uniformly bounded over $x \in X$ and $\tau > 0$,
\begin{displaymath}
    \lVert v_{x, \tau}\rVert_{C(X_\mu)} \leq \lVert \varphi^{(\mu)}_\sigma(x) \rVert_{\mathcal H_\sigma} \lVert \kappa\rVert_{\infty} \leq \varpi_\sigma^2 \lVert \kappa \rVert_\infty.
\end{displaymath}
Correspondingly, $ \{ v_{x,\tau}^m f \}_{\tau>0}$ is a uniformly bounded family that converges pointwise to $v_x^m f$. Thus, by the dominated convergence theorem,
\begin{align*}
    \lim_{\tau\to 0^+} g^{(t)}_{m,\sigma,\tau}(x)
    &= \lim_{\tau \to 0^+} \frac{w(m)^{-2}}{\varpi_\sigma^m}\int_X v_{x,\tau}^m(y) f(y) \, d\mu(y) \\
    &= \frac{w(m)^{-2}}{\varpi_\sigma^m}\int_X \lim_{\tau \to 0^+} v_{x,\tau}^m(y) f(y) \, d\mu(y) \\
    &= \frac{w(m)^{-2}}{\varpi_\sigma^m}\int_X v_x^m(y) f(y) \, d\mu(y) \\
    &= \frac{w(m)^{-2}}{\varpi_\sigma^m}\int_X \kappa_\sigma^{(t)}(x, y) f(y) \, d\mu(y) = \frac{w(m)^{-2}}{\varpi_\sigma^m} g^{(t)}_{m,\sigma}(x).
\end{align*}

Restricting to $x,x' \in M$, we have
\begin{displaymath}
    \lVert v_{x, \tau} - v_{x',\tau}\rVert_{C(X_\mu)} \leq \left\lVert \varphi^{(\mu)}_\sigma(x) - \varphi^{(\mu)}_\sigma(x') \right\rVert_{\mathcal H_\sigma} \lVert \kappa\rVert_{\infty},
\end{displaymath}
so the convergence of $v_{x,\tau}$ to $v_x$ as $\tau \to 0^+$ is uniform over $x \in M$ by uniform continuity of $\varphi^{(\mu)}_\sigma$ on that compact set. By uniform boundedness of $ v_{x,\tau}^m f$, it follows that the convergence of $v^m_{x,\tau} f$ to $v^m_x f$ is uniform over $x \in X$, and thus the convergence of $g^{(t)}_{m,\sigma,\tau}\rvert_M$ is also uniform,
\begin{equation}
    \label{eq:conv_g_tau}
    \lim_{\tau \to 0^+} \left\lVert w(m)^2\varpi_\sigma^m g^{(t)}_{m,\sigma,\tau} - g^{(t)}_{m,\sigma} \right\rVert_{C(M)} = 0.
\end{equation}

Next, we have
\begin{displaymath}
    \iota g^{(t)}_{m,\sigma} = \int_X G_\sigma \kappa^{(t)}(\cdot, y) f(y) \, d\mu(y),
\end{displaymath}
and by strong convergence of $G_\sigma$ to the identity (\cref{prty:k4}), we deduce convergence of $g^{(t)}_{m,\sigma}$ in $H$-norm,
\begin{equation}
    \label{eq:conv_g_sigma}
    \lim_{\sigma\to 0^+} \left\lVert \iota (g^{(t)}_{m,\sigma} - g^{(t)}_m)\right\rVert_H = 0.
\end{equation}

\subsubsection{Convergence of $h^{(t)}_{m,\sigma,\tau}$}
\label{sec:conv_h}

We now examine the behavior of the normalization function $h^{(t)}_{m,\sigma,\tau}$. Proceeding analogously to \cref{sec:conv_g}, we obtain
\begin{equation}
    \label{eq:conv_h_tau_sigma}
    \lim_{\tau \to 0^+} \left\lVert w(m)^2\varpi_\sigma^m h^{(t)}_{m,\sigma,\tau} - h^{(t)}_{m,\sigma} \right\rVert_{C(M)} = 0, \quad \lim_{\sigma\to 0^+} \left\lVert \iota (h^{(t)}_{m,\sigma} - h^{(t)}_m)\right\rVert_H = 0.
\end{equation}
Let $b>0$ be a lower bound for $\kappa$ on the compact set $M \times X_\mu$. Since the kernel $k_\sigma$ is Markovian with respect to $\mu$, for every $x \in M$ and $y \in X_\mu$ we have
\begin{displaymath}
    \kappa_\sigma^{(t)}(x, y) = \int_X k_\sigma(x,\cdot) \kappa^{(t)}(\cdot, y) \, d\mu(y) \geq b.
\end{displaymath}
As a result, $h^{(t)}_{m,\sigma}(x) \geq \beta^m$ for every $x \in M$, which implies that this function has a multiplicative inverse $1/h^{(t)}_{\mu,\sigma} \in C(M)$. Moreover, the uniform convergence $w(m)^2\varpi_\sigma^m h^{(t)}_{m,\sigma,\tau} \to h^{(t)}_{m,\sigma}$ implies that for every $\delta \in (0, b^m ]$ there exists $\tau_\delta>0$ such that for all $ \tau \in (0, \tau_\delta]$ and $x \in M$, $h^{(t)}_{m,\sigma,\tau}(x) \geq (b^m - \delta) w(m)^{-2}/ \varpi_\sigma^m > 0$. The multiplicative inverse $1/h^{(t)}_{m,\sigma,\tau} \in C(M)$ thus exists for $\tau \geq \tau_\delta$, and we have
\begin{displaymath}
    \lim_{\tau\to 0^+} \left\lVert \frac{1}{w(m)^2 \varpi_\sigma^m h^{(t)}_{m,\sigma,\tau}} - \frac{1}{h^{(t)}_{m,\sigma}}\right\rVert_{C(M)} = 0.
\end{displaymath}

\subsubsection{Convergence of $f^{(t)}_{\mu,\sigma,\tau}$}

By the results of \cref{sec:conv_h}, $f^{(t)}_{m,\sigma,\tau} = g^{(t)}_{m,\sigma,\tau}/h^{(t)}_{m,\sigma,\tau}$ is well-defined as an element of $C(M)$ for small-enough $\tau$, and $f^{(t)}_{m,\sigma} = g^{(t)}_{m,\sigma}/h^{(t)}_{m,\sigma}$ lies in $C(M)$ for all $m \in \mathbb N$ and $\sigma > 0$. Moreover, we have
\begin{align*}
    f^{(t)}_{m,\sigma,\tau} - f^{(t)}_{m,\sigma}
    &= \frac{g^{(t)}_{m,\sigma,\tau}h^{(t)}_{m,\sigma} - h^{(t)}_{m,\sigma,\tau} g^{(t)}_{m,\sigma}}{h^{(t)}_{m,\sigma,\tau}h^{(t)}_{m,\sigma}} \\
    &= \frac{\left(w(m)^2 \varpi^m_\sigma g^{(t)}_{m,\sigma,\tau} - g^{(t)}_{m,\sigma}\right) h^{(t)}_{m,\sigma} + \left(w(m)^2 \varpi^m h^{(t)}_{m,\sigma,\tau} - h^{(t)}_{m,\sigma}\right) g^{(t)}_{m,\sigma}}{w(m)^2 \varpi^m_\sigma h^{(t)}_{m,\sigma,\tau}h^{(t)}_{m,\sigma}}.
\end{align*}
Therefore, by \eqref{eq:conv_g_tau}, \eqref{eq:conv_h_tau_sigma}, and uniform boundedness of $h^{(t)}_{m,\sigma,\tau}$ and $h^{(t)}_{m,\sigma}$ away from zero, we obtain
\begin{displaymath}
    \lim_{\tau\to 0^+}\left\lVert f^{(t)}_{m,\sigma,\tau} - f^{(t)}_{m,\sigma}\right\rVert_{C(M)} = 0.
\end{displaymath}
Similarly, defining $f^{(t)}_m = g^{(t)}_m / h^{(t)}_m \in C(M)$, we have
\begin{displaymath}
    f^{(t)}_{m,\sigma} - f^{(t)}_m = \frac{\left(g^{(t)}_{m,\sigma} - g^{(t)}_m\right) h^{(t)}_m + \left(h^{(t)}_{m,\sigma} - h^{(t)}_m\right) g^{(t)}_m}{h^{(t)}_{m,\sigma}h^{(t)}_m},
\end{displaymath}
and thus
\begin{displaymath}
    \lim_{\sigma\to 0^+} \left\lVert \iota ( f^{(t)}_{m,\sigma} - f^{(t)}_m)\right\rVert_H = 0
\end{displaymath}
by \eqref{eq:conv_g_sigma}, \eqref{eq:conv_h_tau_sigma}, and uniform boundedness of $h^{(t)}_{m,\sigma}$ and $h^{(t)}_m$ away from zero.

Observe now that
\begin{displaymath}
    f^{(t)}_m = \frac{\int_X \kappa(\Phi^t(\cdot), y)^mf(y)\, d\mu(y)}{\int_X \kappa(\Phi^t(\cdot), \tilde y)^m\, d\mu(\tilde y)} = U^t \frac{\int_X \kappa(\cdot, y)^mf(y)\, d\mu(y)}{\int_X \kappa(\cdot, \tilde y)^m\, d\mu(\tilde y)} = U^t P_m f,
\end{displaymath}
since the Koopman operator $U^t$ acts as an algebra homomorphism on $C(M)$. Since the kernel $\kappa$ is continuous and satisfies~\eqref{eq:kappa_decay}, it follows from \cref{lem:kappa_ops} that $P_m f$ converges as $m \to \infty$ to $f$ $\mu$-a.e.\ and in the norm of $H$. As a result, $f^{(t)}_m \to U^t f$ $\mu$-a.e.\
\begin{displaymath}
    \lim_{m\to\infty}\left\lVert \iota(f^{(t)}_m - U^t f)\right\rVert_H = 0.
\end{displaymath}

In summary, we have shown $\mu$-a.e.\ and $H$-norm convergence of $f^{(t)}_{m,\sigma,\tau}$ to $U^t f$ in the iterated limit of $m \to \infty$ after $\sigma \to 0^+$ after $\tau \to 0^+$, as claimed. This completes our proof of \cref{thm:fock_consistency}.

\subsection{Approximation on finite-dimensional tori}
\label{sec:fock_consistency_d}

Recall that for every $x \in X$ the feature vectors $\varphi_{\sigma,\tau,d}(x)$ from~\eqref{eq:feature_vec_d} lie in $(\mathcal H_\tau)_1 \subseteq (\mathcal H_\tau)_{R_w}$, and norm-converge in to $\varphi^{(\mu)}_\sigma(x)$ as $d \to \infty$. By \cref{prop:rkha_spec} and \cref{thm:quotient}, the corresponding spectral elements $\hat\varphi_{\sigma,\tau,d}(x)$ converge to $\hat\varphi_{\sigma,\tau}(x)$ in the weak-$^*$ topology of $\sigma(F_w(\mathcal H))$. This implies pointwise convergence of the prediction function,
\begin{equation}
    \label{eq:fock_pointwise_consistency_d}
    \lim_{d \to \infty}f^{(t)}_{m,\sigma,\tau,d}(x) = f^{(t)}_{m,\sigma,\tau}(x), \quad \forall x \in X,
\end{equation}
as claimed in \cref{sec:finite_dimensional_overview}.

Next, the square residual $\lvert f^{(t)}_{m,\sigma,\tau,d}(x) - f^{(t)}_{m,\sigma,\tau}(x)\rvert^2 = \lvert \hat f^{(t)}_{m,\tau}(\hat\varphi_{\sigma,\tau,d}(x)) - \hat f^{(t)}_{m,\tau}(\hat\varphi^{(\mu)}_\sigma(x)) \rvert^2$ is dominated by the constant function on $X$ equal to $\lVert \hat f^{(t)}_{m,\sigma,\tau}\rVert_{C(\sigma(F_w(\mathcal H_\tau)))}^2$. This function is $\mu$-integrable since $\mu$ is a probability measure. Hence, by the dominated convergence theorem, the pointwise convergence~\eqref{eq:fock_pointwise_consistency_d} implies norm convergence in $H = L^2(\mu)$, verifying~\eqref{eq:fock_consistency_d}.

\subsection{Topological models on complex tori}
\label{sec:top_models_complex}

The topological models of regularized Koopman dynamics from~\cref{sec:top_models} were defined on the union of tori $\mathbb S_\tau \subset \sigma(\hfk)$ that contain the image $\hat X_\tau \subset \mathbb S_\tau$ of state space under the feature map $\hat\varphi_\tau$. In this subsection, we consider an alternative formulation that uses \cref{prop:interpolation} to build a topological model that is defined on a single complex torus $\mathbb T_{\tau,a}^\text{complex} \subset \sigma(\hfk)$.

First, we choose the approximate Koopman eigenfunctions $\zeta_{j,\tau}$ as the orthonormal basis functions of $\mathcal H_\tau$, indexed by $j \in \mathbb Z$. We also choose the complex torus $\mathbb T_a^\text{complex}$ associated with this basis and an arbitrary weight vector $a = (a_j)_{j \in \mathbb Z} \in \mathbb A_w$ with strictly positive elements.

Next, consider a lifting map $\mathcal L \colon \tilde{\mathcal H}_\tau \to \fk$; i.e., a norm-continuous map satisfying $\tilde \pi \circ \mathcal L = \Id_{\tilde{\mathcal H_\tau}}$. Since $\mathcal H_\tau$ is a subspace of $\tilde{\mathcal H}_\tau$, a possible choice for $\mathcal L$ is the inclusion map $\mathcal H_\tau \hookrightarrow \tilde{\mathcal H}_\tau$. More generally, $\tilde f = \mathcal L f$ could yield a representation of $f \in \tilde{\mathcal H}_\tau$ as a product--sum of functions $g_{i,j} \in \mathcal H_\tau$, i.e., $\tilde f = \sum_{i=1}^N c_i \bigvee_{j=0}^{M_i} g_{i,j} \in \fk$ where $f = \sum_{i=1}^N c_i\prod_{j=0}^{M_i} g_{i,j}$. The embedding via $\mathcal K_m$ from \cref{sec:embedding_overview} can be viewed as an approximate version of such an $\mathcal L$ up to normalization.

With any such lifting map, define $\hat{\mathcal L}_a \colon \tilde{\mathcal H}_\tau \to \hat F_{w,a}(\mathcal H_\tau)$ as $\hat{\mathcal L}_a = \upsilon_a^{-1} \circ \Gamma \circ \mathcal L$ and let $\hat f_a = \hat{\mathcal L}_a f$. Due to \cref{prop:interpolation}, we can evolve $\hat f_a$ by the torus rotation $R_\tau^t \rvert_{\mathbb T_a^\text{complex}}$ to get $\hat f^{(t)}_{\tau, a} = \hat f_a \circ R^t_\tau \rvert_{\mathbb T_a^\text{complex}}$, and $\hat f^{(t)}_\tau = \upsilon_a \hat f^{(t)}_a$ will be the unique function in $\hat F_w(\mathcal H_\tau)$ such that $\hat f^{(t)}_\tau|_{\mathbb T_a^\text{complex}} = \hat f^{(t)}_{\tau,a}$. Hence, for any lifting of $f$, there is a unique approximate Koopman evolution $f^{(t)}_\tau = \hat\pi \hat f^{(t)}_\tau$ induced from the rotation system on $\mathbb T_a$ such that $f^{(0)}_\tau = f$.

\section{Data-driven formulation}
\label{sec:data_driven_overview}

The finite-dimensional approximation scheme described in \cref{sec:finite_dimensional_overview} is amenable to data-driven approximation using kernel methods \cites{BerryEtAl15,GiannakisEtAl15,Giannakis19,DasEtAl21,GiannakisValva24,GiannakisValva24b}. In this subsection, we give a high-level overview of the data-driven scheme, relegating a detailed convergence analysis to these references.

We consider training data $y_0, y_1, \ldots, y_{N-1}$ in a data space $Y$. The data is sampled along a dynamical trajectory $x_0, x_1, \ldots, x_{N-1} \in X$ with $x_i = \Phi^{i \,\Delta t}(x_0)$, for an initial condition $x_0$ in the forward-invariant set $M$, sampling interval $\Delta t >0$, and observation map $F \colon X \to Y$ such that $y_i = F(x_i)$. Letting $\mu_N = \sum_{i=0}^{N-1} \delta_{x_i} / N$ be the sampling measure associated with the training dataset, we have
\begin{equation}
    \label{eq:ergodic_av}
    \lim_{N\to\infty}\int_M f \, d\mu_N = \int_M f \, d\mu, \quad \forall f \in C(M),
\end{equation}
for $\mu$-a.e.\ $x \in M$ by ergodicity. For systems with so-called physical invariant measures \cite{Blank17}, \eqref{eq:ergodic_av} holds for initial conditions $x_0$ drawn from a set of positive ambient measure (e.g., Lebesgue measure) on $M$.

Based on the above, our data-driven methods replace the Hilbert space $H=L^2(\mu)$ associated with the invariant measure with finite-dimensional Hilbert spaces $H_N := L^2(\mu_N)$ associated with the sampling measures. The various kernel integral operators employed in previous sections, including $K_\tau$ and $G_\tau$ from \cref{sec:spectral_approx}, $\tilde K_\tau$ from \cref{sec:overview_rkha}, $\mathcal K_m$ and $\hat{\mathcal K}_m$ from \cref{sec:embedding_overview}, and $P_m$ from \cref{sec:fock_consistency} are then approximated by analogous (finite-rank) operators defined on $H_N$. We will denote these operators using $N$ subscripts; e.g., $G_{\tau,N} \colon H_N \to H_N $ is the data-driven analog of $G_\tau$.

In our constructions, the reproducing kernel $k_\tau$ of $\mathcal H_\tau$ is built by Markov normalization of an un-normalized kernel function (see \cref{app:markov}). In the data-driven setting, normalization leads to a data-dependent kernel $k_{\tau,N} \colon X \times X \to \mathbb R_+ $ that is continuous, strictly positive-definite, and Markov-normalized with respect to $\mu_N$. The kernels $k_{\tau,N}$ thus have associated RKHSs $\mathcal H_{\tau,N} \subset C(X)$, and the data-driven analog of $K_\tau \colon H \to \mathcal H_\tau$ is an integral operator $K_{\tau,N} \colon H_N \to \mathcal H_{\tau,N}$ associated with the data-dependent kernel $k_{\tau,N}$; that is,
\begin{displaymath}
    K_{\tau,N} f = \int_X k_{\tau,N}(\cdot, x) f(x) \, d\mu_N(x) \equiv \frac{1}{N} \sum_{i=0}^{N-1} k_{\tau,N}(\cdot, x_i)f(x_i).
\end{displaymath}
In the large data limit, $N \to \infty$, the restrictions of $k_{\tau,N}$ to $M \times M$ converge uniformly to $k_\tau$ for every initial condition $x_0 \in M$ satisfying~\eqref{eq:ergodic_av}.

Using an eigendecomposition of $G_{\tau,N} = K_{\tau,N}^* K_{\tau,N}$,
\begin{displaymath}
    G_{\tau,N} \phi_{j,N} = \lambda_{j,\tau,N} \phi_{j,N}, \quad j \in \mathbb N_0,
\end{displaymath}
we have, by analogous properties to \ref{prty:k1}--\ref{prty:k4}, that the eigenvalues $\lambda_{j,\tau,N}$ lie in the interval $(0, 1]$ and can be ordered as $1 = \lambda_{0,\tau,N} > \lambda_{1, \tau, N} \geq \lambda_{2,\tau,N} \geq \cdots \lambda_{N-1,\tau,N} > 0$. Moreover, the corresponding eigenvectors $\phi_{j,N}$ form an orthonormal basis of $H_N$ that does not depend on $\tau$ by a semigroup property analogous to \ref{prty:k3}. These eigenvectors induce orthonormal vectors $\psi_{j,\tau,N} \in \mathcal H_\tau$, given by
\begin{displaymath}
    \psi_{j,\tau,N} = \frac{1}{\sqrt{\lambda_{j,\tau,N}}} K_{\tau,N} \phi_{j,N}.
\end{displaymath}

For $l \in \mathbb N$ such that $2l + 1 \leq N$, define the subspace $\Psi_{\tau,l,N} = \spn \{ \psi_{0,\tau,N}, \ldots, \psi_{2l,\tau,N} \} \subseteq \mathcal H_{\tau, N}$. We approximate the regularized Koopman generator $W_\tau \colon D(W_\tau) \to \mathcal H_{\tau,N}$ by skew-adjoint, finite-rank operators $W_{\tau,l,N} \colon \mathcal H_{\tau,N} \to \mathcal H_{\tau,N}$ with $\ran W_{\tau,l,N} \subseteq \Psi_{\tau,L,N}$. These operators have associated eigendecomposition
\begin{displaymath}
    W_{\tau,l,N} \zeta_{j,\tau,l,N} = i \omega_{j,\tau,l,N} \zeta_{j,\tau,l,N}, \quad j \in \{ -l, \ldots, l \},
\end{displaymath}
where $\omega_{j,\tau,l,N}$ are real eigenfrequencies satisfying $\omega_{-j,\tau,l,N} = - \omega_{j,\tau,l,N}$, and $\zeta_{j,\tau,l,N} \in \mathcal H_{\tau,l,N}$ are normalized eigenvectors that form complex-conjugate pairs, $\zeta_{-j,\tau,l,N} = \zeta_{j,\tau,l,N}^*$, with $\zeta_{0,\tau,l,N} = 1_X$. Computationally, we obtain the eigenpairs $(\omega_{j,\tau,l,N}, \zeta_{j,\tau,l,N})$ by solving an $l \times l$ matrix eigenvalue problem giving eigenpairs $(\omega_{j,\tau,l,N}, \xi_{j,\tau,l,N})$ for an approximate generator $V_{\tau,l,N}\colon H_N \to H_N$, and then mapping the eigenvectors $\xi_{j,\tau,l,N} \in H_N$ to $\zeta_{j,\tau,l,N} = T_{\tau,N} \xi_{j,\tau,l,N}$ using the isometry $T_{\tau, N} : H_N \to \mathcal H_{\tau,N}$ defined via the polar decomposition of $K_{\tau,N}$ as in~\eqref{eq:polar_decomp}. We order the computed eigenpairs in order of increasing Dirichlet energy as in \cref{sec:spectral_approx}. Further details can be found in \cref{app:spectral_approx}. The results converge to the eigendecomposition  of $W_\tau$ in an iterated limit of $l\to\infty$ after $N\to \infty$.

Having obtained $W_{\tau,l,N}$ and its eigendecomposition, the rest of the data-driven scheme proceeds in a very similar manner to what was described in \crefrange{sec:overview_rkha}{sec:finite_dimensional_overview}:
\begin{itemize}[wide]
    \item We build the weighted symmetric Fock space $\fkn$ analogously to $F_w(\mathcal H_\tau)$, and lift the unitaries $U^t_{\tau,l,N} \colon \mathcal H_{\tau,N} \to \mathcal H_{\tau,N}$ to unitaries $\tilde U^t_{\tau,l,N} \colon \fkn \to \fkn$ that act multiplicatively with respect to the symmetric tensor product. Embedded in the spectrum $\sigma(F_w(\mathcal H_{\tau,N}))$ is a union of tori $\mathbb S_{\tau,N}$ analogous to $\mathbb S_\tau$, in which the state space $X$ is mapped by means of a feature map $\hat \varphi^{(\mu_N)}_{\sigma,\tau} \colon X \to \sigma(\fkn)$ analogous to $\hat \varphi^{(\mu)}_{\sigma,\tau} \colon X \to \sigma(\fkn)$.
    \item We define a rotation system $R^t_{\tau,l,N} \colon \sigma(\fkn) \to \sigma(\fkn)$ on the spectrum of $\fkn$ that is induced by the unitaries $\tilde U^t_{\tau,l,N}$, i.e., $R^t_{\tau, l, N}(\chi) = \chi \circ \tilde U^t_{\tau,l,N} $. This will serve as an approximation of the rotation system $R^t_\tau$ on $\sigma(\fk)$.
    \item Letting $\hfkn \subset C(\sigma(\fkn))$ be the the Gelfand image of $\fkn$ as an RKHA on the spectrum of $\fkn$, we embed observables $f \in C(M)$ into elements $\hat g_{m,\tau,N} \in \hfkn$, $m \in \mathbb N$, by means of data-driven integral operators $\hat{\mathcal K}_{m,N} \colon H_N \to \hfkn$ analogous to $\hat{\mathcal K}_m \colon H_N \to \hfk$. The restrictions of these functions to $d$-dimensional tori in $\mathbb S_{\tau,N}$, $d \leq \lfloor (N -1) / 2 \rfloor $, are degree-$m$ polynomials in the coordinates $z_1, \ldots, z_d$ and their complex conjugates. These polynomials evolve under the rotation system $R^t_{\tau,l,N}$ analogously to $\hat g^{(t)}_{m,\tau}$ in~\eqref{eq:hat_gh_t_m_tau_d}.
    \item Proceeding similarly to the construction of $f^{(t)}_{m,\sigma,\tau,d}$ in~\eqref{eq:f_t_m_tau_d}, we approximate the Koopman evolution $U^t f$ by means of the function
        \begin{equation}
            \label{eq:f_t_m_tau_d_n}
            f^{(t)}_{m,\sigma,\tau,d,l,N} = \frac{g^{(t)}_{m,\sigma,\tau,l,N}}{h^{(t)}_{m,\sigma,\tau,l,N}} = \frac{\sum_{j \in \mathbb J_{d,m}} \binom{m}{j_{-d} \cdots j_d} C_{j,\tau,d,l,N}^{(g)} \prod_{r=1}^d e^{i (j_{-r} - j_r)\omega_{r,\tau,l,N}t} \zeta_{r,\sigma,\tau,l,N}^{j_{-r} - j_r}}{\sum_{j \in \mathbb J_{d,m}} \binom{m}{j_{-d} \cdots j_d} C_{j,\tau,d,l,N}^{(h)} \prod_{r=1}^d e^{i (j_{-r} - j_r)\omega_{r,\tau,l,N}t} \zeta_{r,\sigma,\tau,l,N}^{j_{-r} - j_r}} \in C(M).
        \end{equation}
        Here, $g^{(t)}_{m,\sigma,\tau,l,N}$ and $h^{(t)}_{m,\sigma,\tau,l,N}$ are elements of the RKHA $\tilde{\mathcal H}_{\tau,N}$ on $X$ defined analogously to $\tilde{\mathcal H}_\tau$ using the kernel $k_{\tau,N}$. Moreover, $C_{j,\tau,d,l,N}^{(g)}$ and $C_{j,\tau,d,l,N}^{(h)}$ are moment coefficients computed using similar formulas to~\eqref{eq:moms}, replacing the invariant measure $\mu$ by the sampling measure $\mu_N$ and the regularized generator eigenfunctions $\zeta_{j,\tau}$ by their data-driven counterparts $\zeta_{j,\tau,l,N}$. Meanwhile, $\zeta_{r,\sigma,\tau,l,N} \in \mathcal H_{\sigma+\frac{\tau}{2},N}$ are kernel-smoothed versions of $\zeta_{r,\tau,l,N}$ defined similarly to~\eqref{eq:zeta_sigma}.
\end{itemize}
The final approximation $f^{(t)}_{m,\sigma,\tau,d,l,N}$ converges to $U^t f$ in the norm of $H$ in the iterated limits of $\lim_{m\to\infty}\lim_{\sigma\to 0^+}\lim_{\tau\to 0^+}\lim_{d\to\infty}\lim_{l\to\infty}\lim_{N\to\infty}$, taken in that order.

\section{Numerical experiments}
\label{sec:experiments}

We apply the second quantization framework presented in \crefrange{sec:prelims}{sec:data_driven_overview} to two measure-preserving dynamical systems: a Stepanoff flow on $\mathbb T^2$ \cite{Oxtoby53} and the L63 system on $\mathbb R^3$. Both systems exhibit dynamical complexity from an operator-theoretic standpoint: The Stepanoff flow is a topologically weak-mixing system that is characterized by absence of continuous nonconstant Koopman functions, and the L63 system is measure-theoretically mixing \cite{LuzzattoEtAl05} with an associated continuous spectrum of the Koopman operator on $H$. A primary difference between the two examples is that the ergodic invariant measure $\mu$ is the Haar probability measure on $\mathbb T^2$ (and therefore has a smooth density in local coordinates), whereas in the L63 case $\mu$ is an SRB measure supported on a fractal set of zero Lebesgue measure (the Lorenz attractor) \cite{Tucker99}.

Both systems satisfy the assumptions laid out in \cref{sec:dynamical_system}: For the Stepanoff flow, the state space $X$, forward-invariant compact manifold $M$, and support of the invariant measure $\mu$ all coincide,  $ \supp\mu = M = X = \mathbb T^2$. For the L63 system, we have $\supp \mu \subset M \subset X = \mathbb R^3$ where $\supp \mu$ is the Lorenz attractor and $M$ an absorbing ball containing the attractor \cite{LawEtAl14}.

The Stepanoff flow was also employed in numerical experiments in \cite{GiannakisEtAl24} using the tensor network approximation scheme summarized in \cref{sec:tensor_network}. Koopman spectral computations for the two systems using the methods outlined in \cref{sec:spectral_approx,app:markov,app:spectral_approx} were performed in \cite{GiannakisValva24b}.

\subsection{Experimental procedure}
\label{sec:exp_procedure}

We generate (unobserved) states $x_0, x_1, \ldots, x_{N-1} \in X$ and associated training data $y_n = F(x_n)$ in a Euclidean data space $Y$ using a smooth observation map $F \colon X \to Y$. In the case of the Stepanoff flow, the states $x_n$ are sampled on a uniform grid on $\mathbb T^2$, and $F$ is the flat embedding of the 2-torus into $Y = \mathbb R^4$,
\begin{displaymath}
    F(x) = (\cos x^1, \sin x^1, \cos x^2, \sin x^2), \quad x = (x^1, x^2), \quad x^1,x^2 \in [0, 2\pi).
\end{displaymath}
The observation map for the L63 system is the identity map on $X = \mathbb R^3$.

We choose a prediction observable $f \in C(X)$ for each system and consider that the values $f(x_n) \in \mathbb R$ on the training states are known. The prediction observable for the Stepanoff experiments is a von Mises probability density function,
\begin{equation}
    \label{eq:von_mises}
    f(x) = \frac{e^{-\gamma (\cos x^1 + \cos x^2)}}{I_0^2(\gamma)}, \quad x = (x^1, x^2) \in [0, 2\pi)^2,
\end{equation}
where $\gamma>0$ is a concentration parameter and $I_0 \in C^\infty(\mathbb R)$ the modified Bessel function of the first kind of order 0. For the L63 experiments, $f$ is the component function for the first component component of the state vector,
\begin{equation}
    \label{eq:l63_x1}
    f(x) = x^1, \quad x = (x^1, x^2, x^3) \in \mathbb R^3.
\end{equation}

We compute approximate Koopman eigenfunctions $\zeta_{j,\tau,l,N} \in \mathcal H_{\tau,N}$ and corresponding eigenfrequencies $\omega_{j,\tau,l,N} \in \mathbb R$ by processing the training data $y_n$ with the Koopman spectral approximation technique from \cite{GiannakisValva24b}, summarized in \cref{sec:spectral_approx,sec:data_driven_overview,app:spectral_approx}. Note that this scheme makes use of equations of motion through the dynamical vector field $\vec V$ (given in~\eqref{eq:vec_stepanoff} and \eqref{eq:vec_l63} below for the two systems). In both of the Stepanoff and L63 experiments, we build the RKHS $\mathcal H_{\tau,N}$ using a bistochastic kernel normalization, described in \cref{app:markov}. The associated unitary evolution $U^t_{\tau,l,N}$ defines a rotation system on the tori $\mathbb T_{\sigma,\tau,d,x} \subset \sigma(\fkn)$ that we will use for our dynamical prediction experiments. In all cases, we use the torus dimension $d = 50$. We select the tori $\mathbb T_{\sigma,\tau,d,x}$ using the complex-conjugate pairs of eigenfunctions $\zeta_{j,\tau,l,N}$ with the $d$ largest $L^2$ projection coefficient amplitudes $c_j := \lvert \langle \xi_{j,z,\tau,l,N}, f \rangle_{H_N}\rvert$ with respect to the eigenbasis $\xi_{j,z,\tau,l,N}$ (that is, the sequence $a^{(d)}_{\sigma,\tau}(x)$ parameterizing $\mathbb T_{\sigma,\tau,d,x} \equiv T_{\tau, a_{\sigma,\tau}^{(d)}(x)}$ has nonzero elements only at the positions corresponding to the constant eigenfunction $\zeta_{0,\tau,l,N} = 1_X$ and eigenfunctions $\zeta_{j,\tau,k,N}$ with the 50 largest corresponding values of $c_j$, for a total of 101 nonzero elements). Note that the weight function $w$ cancels from our final formula~\eqref{eq:f_t_m_tau_d_n} for the time-evolved prediction observable, so we do not need to specify it explicitly for numerical computations.

Next, to embed the prediction observable $f$ into the Fock space $\fkn$, we use the grading parameter $m=4$ and a Gaussian smoothing kernel $\kappa$ from~\eqref{eq:gaussian_kernel} induced by the Euclidean metric $d(x,x') = \lVert F(x) - F(x')\rVert_2$ on $X$. We compute smoothed eigenfunctions $\varrho_{j,\tau,l,N}$ and the associated moments $C^{(g)}_{j,\tau,d,l,N}$ and $C^{(h)}_{j,\tau,d,l,N}$ using data-driven analogs of~\eqref{eq:smoothed_eigenfuncs} and~\eqref{eq:moms}. Note that the moment computations utilize the values $\zeta_{j,\tau,l,N}(x_n)$ and $f(x_n)$ of the eigenfunctions and prediction observable, respectively, on the states $x_n$. For our chosen parameter values of $d = 50$ and $m = 4$ the number of moments is $\binom{m + 2d}{2d} = \text{4,598,126}$. We then build the prediction function $f^{(t)}_{m,\sigma,\tau,d,l,N}$ using \eqref{eq:f_t_m_tau_d_n}. A listing of the dataset attributes and numerical parameters used in the Stepanoff and L63 experiments is included in \cref{tab:params}.

\begin{table}
    \centering
    \caption{Dataset attributes and numerical parameters used in the Stepanoff flow and L63 experiments.}
    \label{tab:params}
    \begin{tabular}{lcc}
        & Stepanoff flow & Lorenz 63 system \\
        \hline
        System parameters & $\alpha = \sqrt{20}$ & $(\beta, \rho, \sigma) = (8/3, 28, 10)$ \\
        Dimension of state space $X$ & 2 & 3 \\
        Dimension of data space $Y$ & 4 & 3 \\
        Number of training samples $N$ & $256 \times 256 = \text{65,536}$ & 80,000 \\
        Training sampling timestep $\Delta t$ & N/A & 5.0 \\
        Prediction observable $f$ & von Mises, $\gamma = 1$ & first state vector component \\
        Regularization parameter $\sigma$ & $10^{-4}$ & $2 \times 10^{-6}$ \\
        Regularization parameter $\tau$ & $10^{-4}$ & $5 \times 10^{-7}$ \\
        Koopman approximation space dimension $l$ & 4096 & 2048 \\
        Torus dimension $d$ & 50 & 50 \\
        Smoothing kernel bandwidth $\varepsilon$ & 0.05 & 0.1 \\
        Fock space grading $m$ & 4 & 4 \\
        \hline
    \end{tabular}
\end{table}

We test the accuracy of the scheme by comparing $f^{(t)}_{m,\sigma,\tau,d,l,N}(x_n)$ with a high-fidelity numerical approximation of the true Koopman evolution $U^t f(x_n) = f(\Phi^t(x_n))$ based on a ordinary differential equation solver to approximate the flow map $\Phi^t(x_n)$. In addition, we compare the Fock space approach with a conventional RKHS approximation \cite{DasEtAl21}, utilizing the same eigenfunctions and eigenfrequencies as those generating the $d$-dimensional rotation system on $\mathbb T_{\sigma,\tau,d,x}$. Hereafter, we refer to this method as classical approximation. The associated prediction function $f^{(t)}_{\text{cl},\tau,d,l,N} \in \mathcal H_\tau$ is defined as
\begin{displaymath}
    f^{(t)}_{\text{cl},\tau,d,l,N} = \sum_{j=0}^{2d} C^{(f)}_{j,\tau,l,N} e^{i\omega_{j,\tau,l,N}} \zeta_{j,\tau,l,N}, \quad C^{(f)}_{j,\tau,l,N} = \int_X \overline{\zeta_{j,\tau,l,N}} f \, d\mu_N.
\end{displaymath}
The convergence properties of $f^{(t)}_{\text{cl},\tau,d,l,N}$ are analogous to those of $f^{(t)}_{m,\sigma,\tau,d,l,N}$; i.e., we have convergence in the norm of $H$ in the iterated limit $\lim_{\tau\to 0^+} \lim_{d\to\infty} \lim_{l \to \infty} \lim_{N\to \infty}$; see \cite{DasEtAl21} for further details.

Let $f^{(t)}$ stand for either $f^{(t)}_{\text{cl}, \tau, d, l, N}$ or $f^{(t)}_{m,\sigma,\tau,d,l,N}$. To quantitatively assess prediction skill, we compute normalized root mean square error (RMSE) and anomaly correlation (AC) scores, defined as
\begin{align*}
    \text{RMSE}(t) = \frac{\lVert f^{(t)} - U^t f\rVert_{H_N}}{\lVert U^t f \rVert_{H_N}}, \quad \text{AC}(t) = \frac{\langle g^{(t)}, h^{(t)}\rangle_{H_N}}{\lVert g^{(t)}\rVert_{H_N} \lVert h^{(t)}\rVert_{H_N}}
\end{align*}
where $g^{(t)}, h^{(t)} \in H_N$ are the predicted and true anomalies relative to the empirical mean,
\begin{displaymath}
    g^{(t)} = f^{(t)} - \int_X f^{(t)} \, d\mu_N, \quad h^{(t)} = U^t f - \int_X U^t f \, d\mu_N,
\end{displaymath}
respectively. By construction, $\text{AC}(t)$ lies in the interval $[0,1]$. Values of $\text{RMSE}(t)$ close to 0 and $\text{AC}(t)$ close to 1 indicate high prediction skill.

For the remainder of \cref{sec:experiments}, we will use the shorthand notations $f^{(t)}_\text{cl} \equiv f^{(t)}_{\text{cl},\tau,d,l,N}$ and $f^{(t)}_\text{Fock} \equiv f^{(t)}_{m,\sigma,\tau,d,l,N}$ for the classical and Fock space approximations, respectively.

\subsection{Stepanoff flow}
\label{sec:stepanoff}

The dynamical vector field $\vec V\colon \mathbb T^2  \to T \mathbb T^2$ generating the Stepanoff flow has the coordinate representation $\vec V(x) = (V^1(x), V^2(x))$, where
\begin{equation}
    \label{eq:vec_stepanoff}
    V^1(x) = V^2(x) + (1- \alpha) (1 - \cos x^2), \quad V^2(x) = \alpha(1 - \cos(x^1 - x^2)),
\end{equation}
$x = (x^1, x^2) \in [0, 2\pi)^2$, and $\alpha$ is a real parameter. This vector field has zero divergence with respect to the Haar measure $\mu$,
\begin{displaymath}
    \divr_\mu \vec V = \frac{\partial V^1}{\partial x^1} + \frac{\partial V^2}{\partial x^2} = 0,
\end{displaymath}
which implies that $\mu$ is an invariant measure under the associated flow $\Phi^t$. In addition, the system has a fixed point at $x=0$, $\vec V(0) = 0$. In \cite{Oxtoby53} it is shown that the normalized Haar measure is the unique invariant Borel probability measure of this flow that assigns measure 0 to the singleton set $ \{ 0 \} \subset \mathbb T^2$ containing the fixed point.

Since any continuous, nonconstant Koopman eigenfunction of an ergodic flow induces a semiconjugacy with a circle rotation of nonzero frequency, the existence of the fixed point at $x=0$ implies that the system has no continuous Koopman eigenfunctions; i.e., it is topologically weak-mixing. In fact, \cite{Oxtoby53} shows that when $\alpha$ is irrational the Stepanoff flow is topologically conjugate to a time-reparameterized irrational rotation on $\mathbb T^2$ with frequency parameters $(1, \alpha)$, for a time-reparameterization function that vanishes at $x=0$. While, to our knowledge, there are no results in the literature on the measure-theoretic mixing properties of Stepanoff flows, the existence of fixed points is a necessary condition for a smooth ergodic flow on $\mathbb T^2$ to be (strong) mixing \cite{Kocergin75} and the vector field~\eqref{eq:vec_stepanoff} meets at least that requirement.

Here, we work with the parameter value $\alpha = \sqrt{20}$. The resulting dynamical vector field is depicted as a quiver plot in \eqref{eq:vec_stepanoff}. As prediction observable, we consider the von Mises density function $f$ from \eqref{eq:von_mises} with concentration parameter $\gamma=1$. This is shown as a heat map on $\mathbb T^2$ in the top-left panel of \cref{fig:evo_stepanoff}. Note that $f$ has a peak at $(\pi, \pi)$ which is well-separated from the fixed point at $(0, 0)$. The remaining panels in the left-hand column of \cref{fig:evo_stepanoff} show the Koopman evolution $U^t f$ at times $t \in \{ 0.5, 1, 2, 4 \} $. As $t$ increases, the initially radially symmetric structure of $f$ becomes stretched and folded, developing sharp gradients that are suggestive of sensitive dependence on initial conditions. The fact that $f$ is strictly positive makes this observable suitable for testing the ability of approximation schemes to preserve positivity.

\begin{figure}
    \centering
    \includegraphics[width=0.5\textwidth]{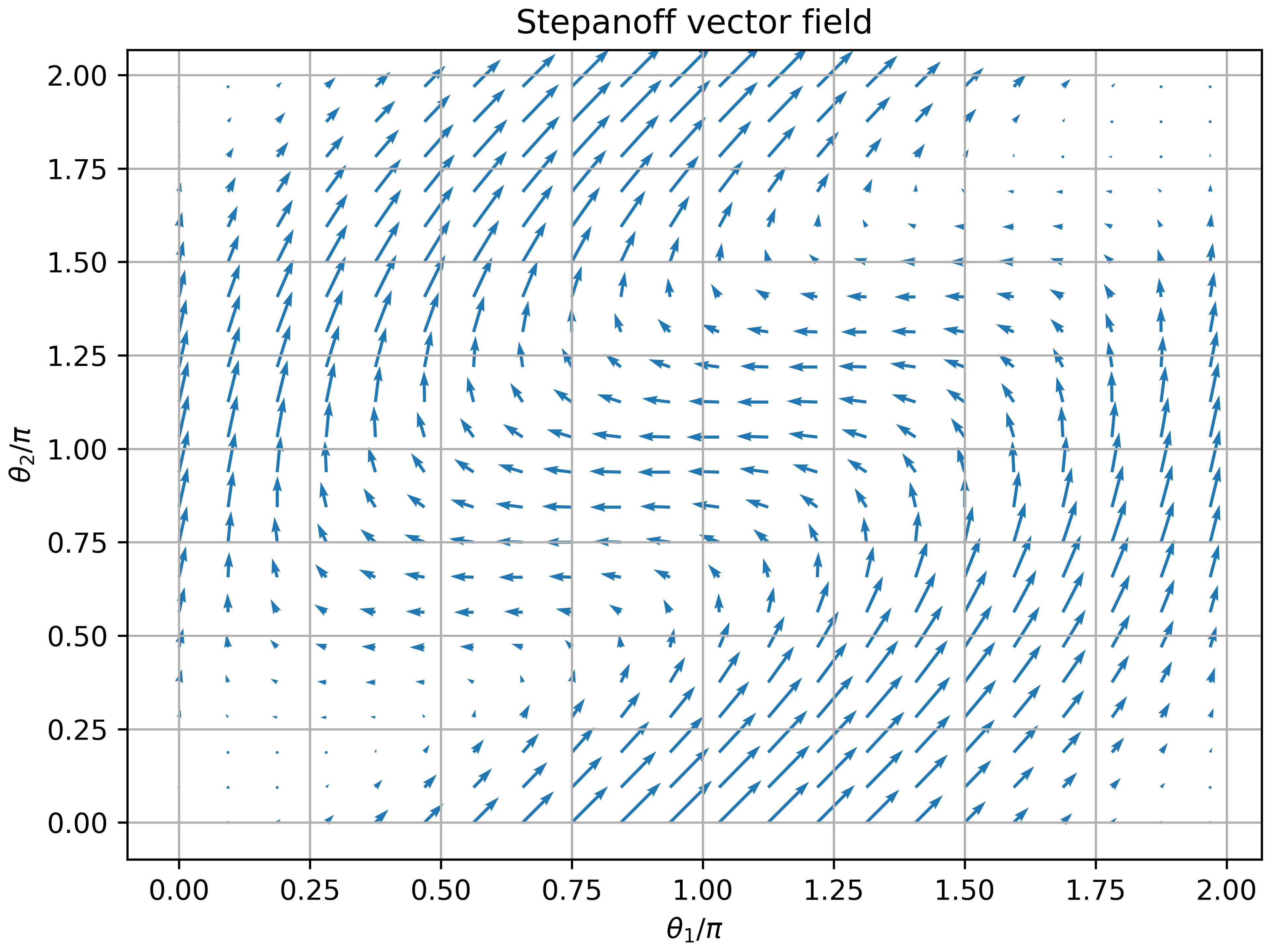}
    \caption{Generating vector field~\eqref{eq:vec_stepanoff} of the Stepanoff flow for $\alpha = \sqrt{20}$.}
    \label{fig:vec_stepanoff}
\end{figure}

\begin{figure}
    \centering
    \includegraphics[width=.93\linewidth]{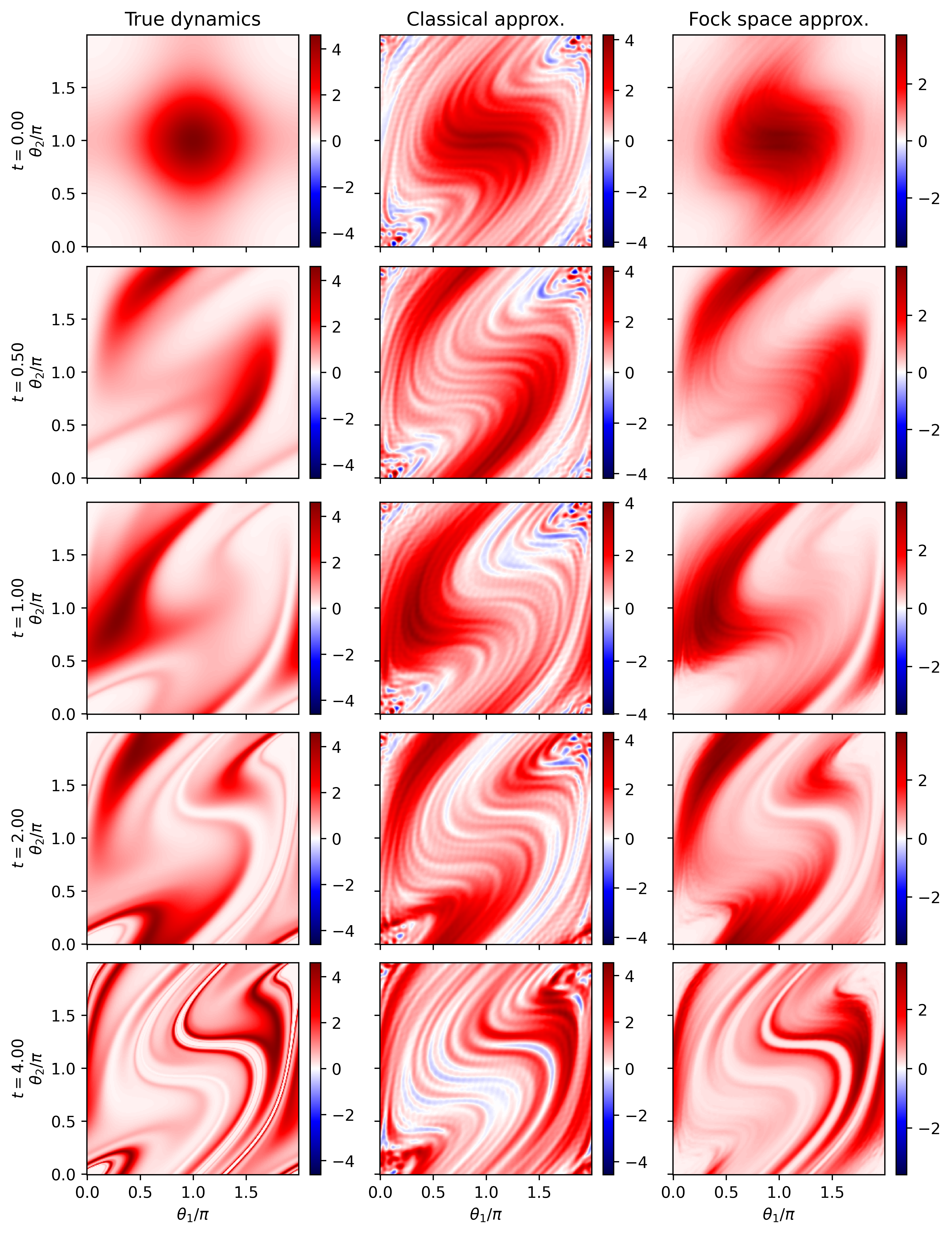}
    \caption{Time evolution of the von Mises density function from~\eqref{eq:von_mises} with $\gamma = 1$ under the Stepanoff flow ($U^{t}f$; left column), the classical approximation based on $2d+1$ eigenfunctions ($f^{(t)}_\text{cl}$; center column), and the Fock space approximation for torus dimension $d = 50$ and degree $m = 4$ ($f^{(t)}_\text{Fock}$; right column). Rows from top to bottom show snapshots at the evolution times $t=0, 0.5, 1, 2, 4$, respectively.}
    \label{fig:evo_stepanoff}
\end{figure}

Our training data is sampled on a uniform $256 \times 256$ grid $ \{ x_n \} \subset \mathbb T^2$, amounting to a total of $N = \text{65,536}$ points. We build the RKHS $\mathcal H_{\tau,N}$ and the $l$-dimensional approximation space $\Psi_{\tau,l,N} \subset \mathcal H_{\tau,N}$ using the regularization parameter $\tau = 10^{-4}$ and dimension parameter $l = 4096$ (see \cref{sec:data_driven_overview}). Representative eigenfrequencies $\omega_{j,\tau,l,N}$ and the corresponding eigenfunctions are shown in \cref{fig:spec,fig:evecs_stepanoff}, respectively. Notice the characteristic ``S-shaped'' pattern of the eigenfunction level sets that is broadly aligned with the dynamical vector field (see again \cref{fig:vec_stepanoff}). Also noteworthy in the eigenfunction plots is the oscillatory nature of the eigenfunctions near the fixed point. This behavior is qualitatively consistent with the slowing down of dynamical trajectories near the fixed point, which needs to be compensated by small-scale spatial oscillations in order to maintain a consistent evolution with the periodic phase evolution implied by the corresponding eigenfrequency, $\zeta_{j,\tau,l,N}(\Phi^t(x)) \approx e^{i\omega_{j,\tau,l,N}} \zeta_{j,\tau,l,N}(x)$.

\begin{figure}
    \centering
    \includegraphics[width=.48\linewidth]{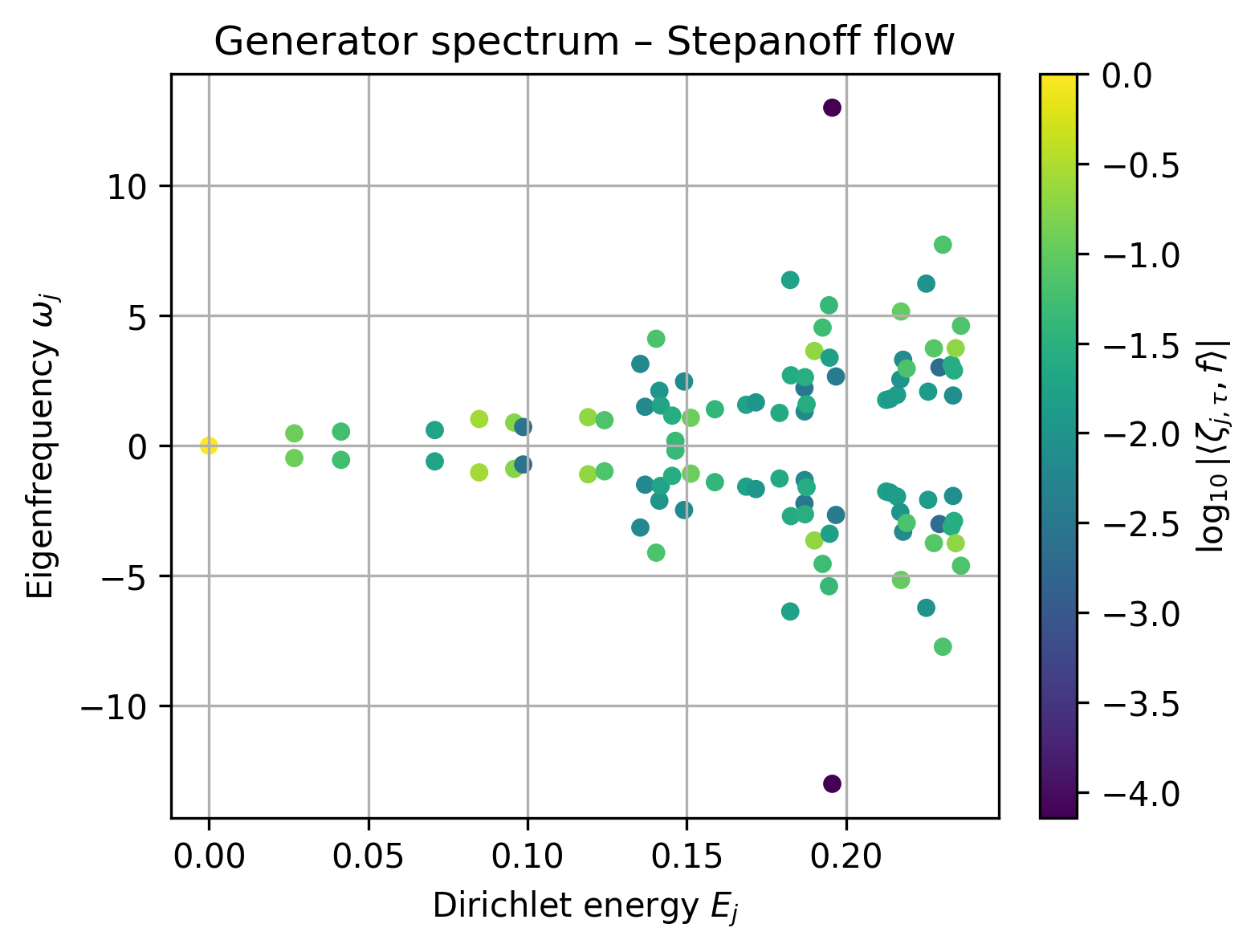}
    \hfill
    \includegraphics[width=.48\linewidth]{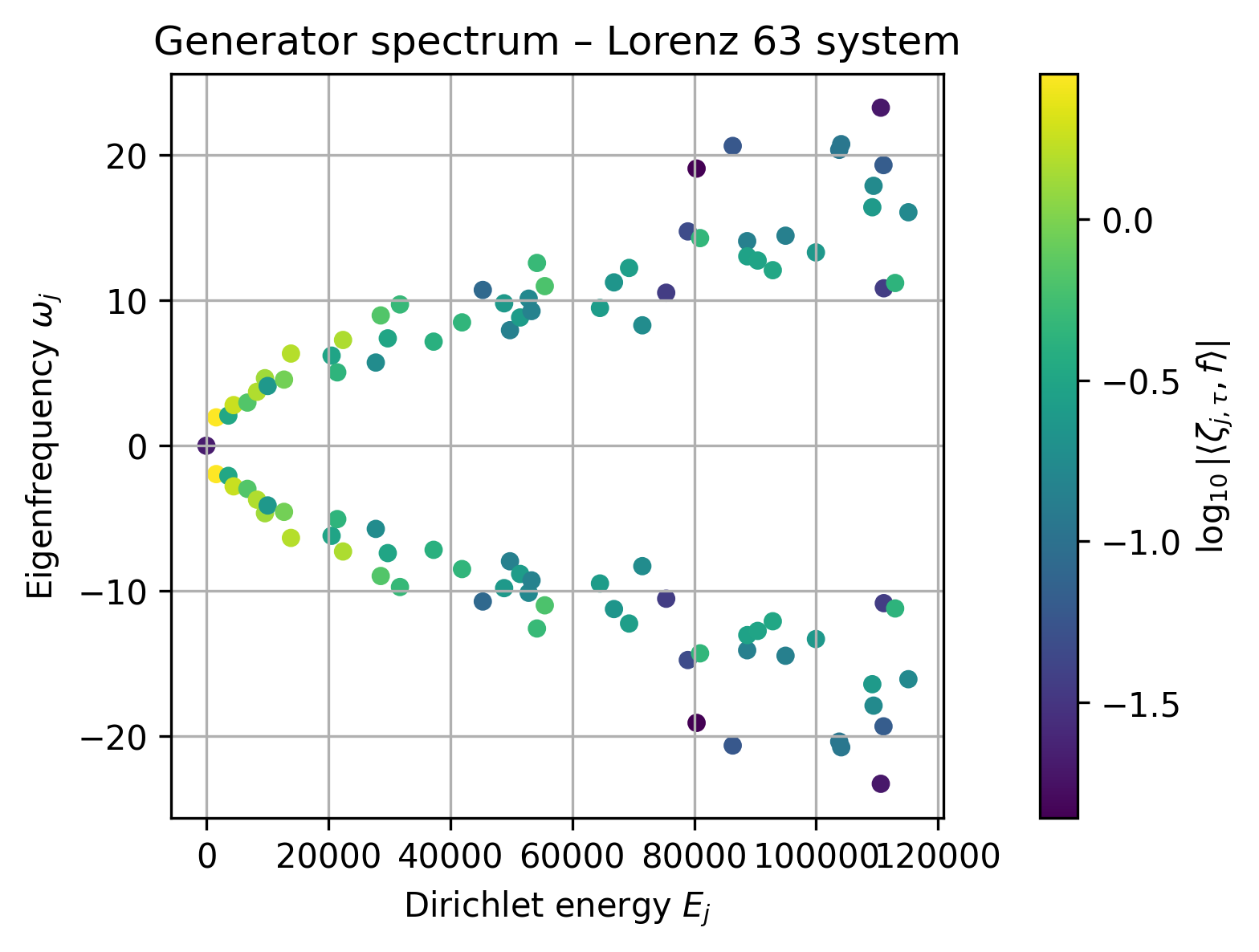}
    \caption{Spectrum of the regularized generator $W_{\tau,l,N}$ for the Stepanoff flow (left) and L63 system (right). Eigenfrequencies $\omega_{j,\tau,l,N}$ are plotted versus their corresponding Dirichlet energies from the RKHS $\mathcal H_\tau$. The plotted points are colored by the logarithms of the expansion coefficient amplitudes $\lvert \langle f, \xi_{j,\tau,l,N}\rangle_{H_N}\rvert$ of the prediction observable $f$ in the eigenbasis $\xi_{j,\tau,l,N}$.}
    \label{fig:spec}
\end{figure}

\begin{figure}
    \centering
    \includegraphics[width=\linewidth]{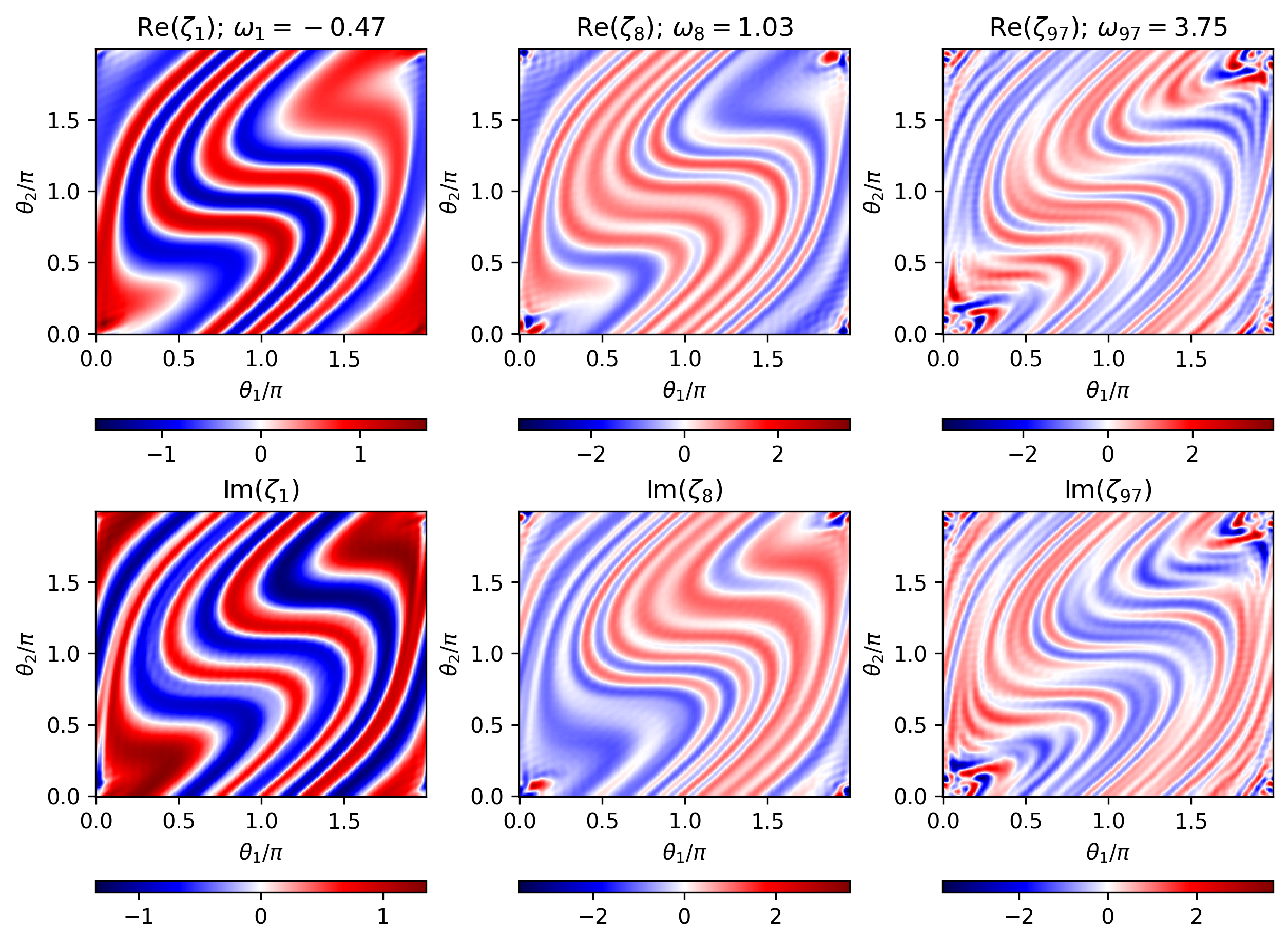}
    \caption{Real and imaginary parts of representative eigenfunctions $\zeta_{j,\tau,l,N}$ for the Stepanoff flow. The eigenfunctions shown have index $j=1, 8, 97$ with respect to the Dirichlet energy ordering, and are members of the complex-conjugate (nonconstant) eigenfunction pairs with the 7th, 1st, 5th largest projection amplitudes $\lvert \langle \xi_{j,\tau,l,N}, f \rangle_{H_N}\rvert$, respectively.}
    \label{fig:evecs_stepanoff}
\end{figure}

Next, in the center column of \cref{fig:evo_stepanoff} we show the evolution $f^{(t)}_\text{cl}$ under the classical Koopman approximation obtained for $d = 50$. The evolution times are the same as those shown for the true evolution in the left-hand column (i.e., $ t \in \{ 0, 0.5, 1, 2, 4 \}$) and $f^{(t)}_\text{cl}$ is plotted at the gridpoints $x_n$. \Cref{fig:err_stepanoff}(center column) shows the approximation error of $f^{(t)}_\text{cl}$ relative to the true evolution $U^t f$ at the gridpoints.

\begin{figure}
    \centering
    \includegraphics[width=.93\linewidth]{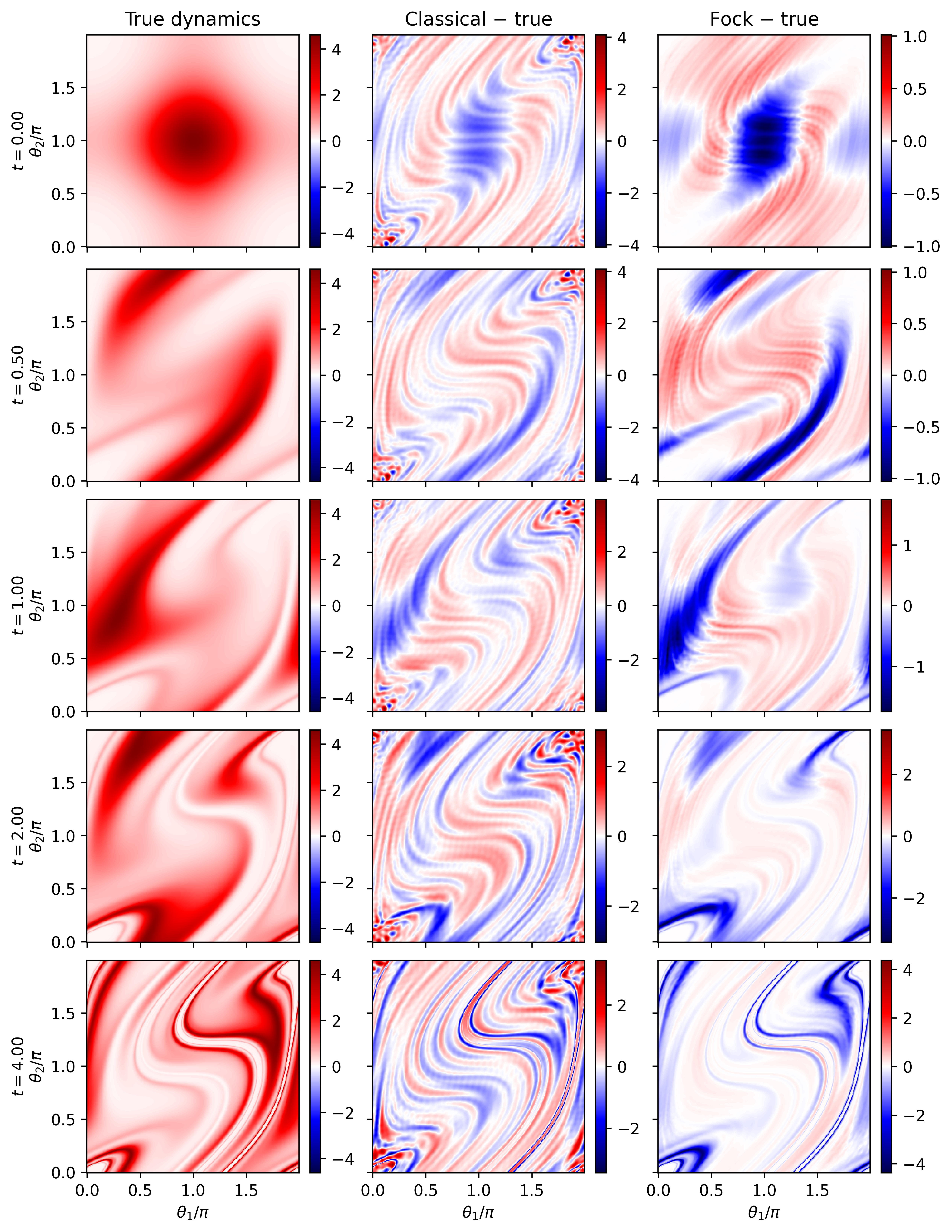}
    \caption{Error in the classical and Fock space approximations from \cref{fig:evo_stepanoff} (center and right columns, respectively) relative to the true Stepanoff evolution. The true evolution is plotted in the left column for reference.}
    \label{fig:err_stepanoff}
\end{figure}

Starting from time $t = 0$, it is clear that $f^{(0)}_\text{cl}$ carries imprints of the characteristic S-shaped spatial structure of the eigenfunctions $\zeta_{j,\tau,d,l,N}$ employed in the approximation (see \cref{fig:evecs_stepanoff}), and as a result provides a poor reconstruction of the radial symmetry of the von Mises density function. Moreover, $f^{(0)}_\text{cl}$ exhibits strongly oscillatory behavior near the fixed point with significant departures to negative values. This is a manifestation of the fact that, being an orthogonal subspace projection method, the classical approximation is not positivity-preserving. At later times, the classical approximation appears to track some of the large-scale qualitative features of the true evolution such as the large-amplitude bands seen at $t = 0.5, 1$ due to stretching of the originally radial von Mises density function by the Stepanoff flow. However, $f^{(t)}_\text{cl}$ continues to exhibit significant oscillatory behavior and departures to negative values, particularly in regions near the fixed point.

We now consider the Fock space approximation $f^{(t)}_\text{Fock}$ for torus dimension $d=50$, grading $m=4$, and smoothing kernel bandwidth parameter $\varepsilon = 0.05$. As noted in \cref{sec:exp_procedure}, with this choice of parameters the approximation space dimension is amplified to \text{4,598,126} so we expect higher reconstruction accuracy than the 101-dimensional classical approximation. Moreover, with $m$ being even, the Fock space approximation is positivity preserving. Indeed, as is evident from the top-right panels in \cref{fig:evo_stepanoff,fig:err_stepanoff}, the Fock space scheme leads to significant increase of reconstruction accuracy at $t=0$ compared to the classical approximation. While imprints from the eigenfunctions are still visible, the radial symmetry of the von Mises density is better represented, and the small-scale oscillations and negative values near the fixed point are eliminated.

\Cref{fig:scores}(left) shows RMSE and AC scores for the Stepanoff flow experiments, computed for the same prediction times $t$ as in \cref{fig:evo_stepanoff,fig:err_stepanoff}. As the evolution time $t$ increases, the error in $f^{(t)}_\text{Fock}$ remains consistently lower than $f^{(t)}_\text{cl}$ at least through $t \simeq 2.2$ and $t \simeq 3$ for the RMSE and AC metrics, respectively. At later times, the error in $f^{(t)}_\text{Fock}$ is somewhat higher than the classical approximation and appears to be predominantly concentrated in narrow bands where the true evolution $U^t f$ exhibits sharp gradients. As illustrated by the $t=4$ results in the bottom row of \cref{fig:err_stepanoff}, the maximal pointwise error in $f^{(t)}_\text{Fock}$ eventually outgrows that in $f^{(t)}_\text{cl}$.

\begin{figure}
    \centering
    \includegraphics[width=0.4\linewidth]{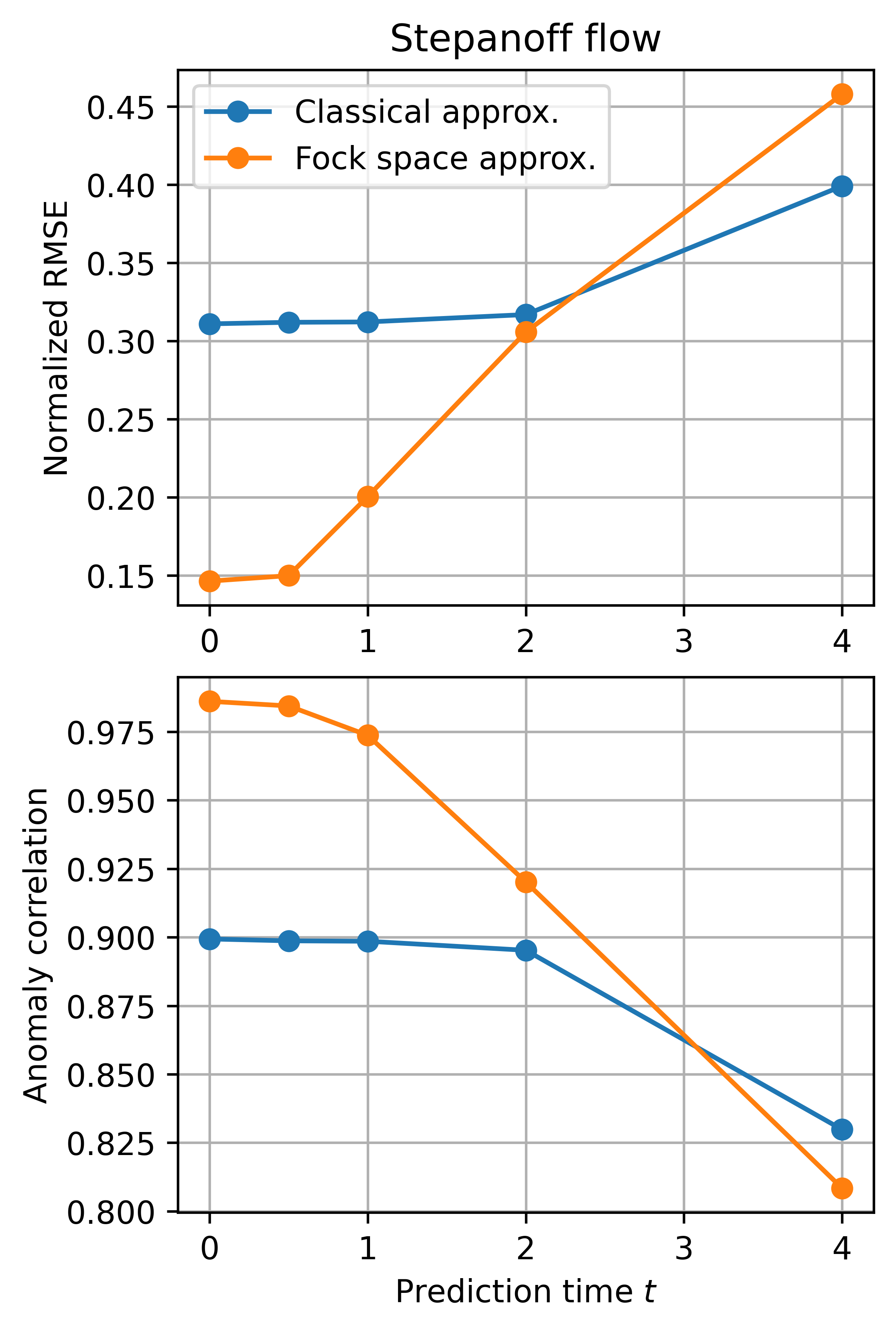}
    \includegraphics[width=0.4\linewidth]{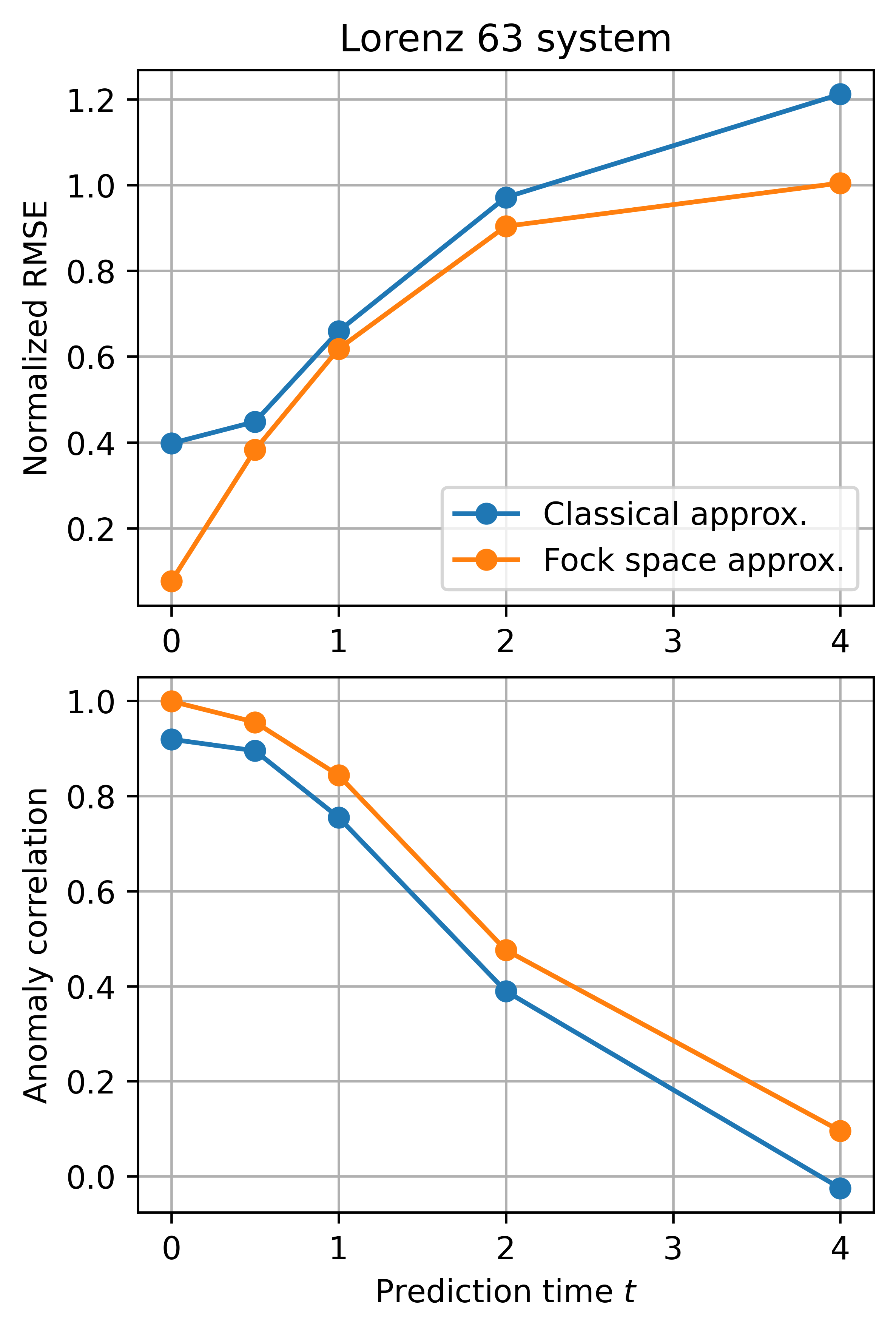}
    \caption{Normalized RMSE (top row) and anomaly correlation scores (bottom row) as a function of prediction time $t$ for the Stepanoff (left column) and L63 experiments (right column).}
    \label{fig:scores}
\end{figure}

\subsection{Lorenz 63 system}
\label{sec:l63}

The L63 system on $\mathbb R^3$ is generated by the vector field $\vec V \colon \mathbb R^3 \to T \mathbb R^3 \cong \mathbb R^3$, where
\begin{equation}
    \label{eq:vec_l63}
    \vec V(x) = (V^1(x), V^2(x), V^3(x)) = (-\sigma(x^2 - x^1), x^1(\rho - x^3) - x^2, x^1 x^2 - \beta x^3)
\end{equation}
and $x = (x^1, x^2, x^3)$. We use the standard parameter values $\beta = 8/3$, $\rho = 28$, and $\sigma = 10$, leading to the chaotic Lorenz ``butterfly'' attractor.

The left-hand column in \cref{fig:evo_l63} shows the evolution $U^t f$ of first state vector component~\eqref{eq:l63_x1} for evolution times $t \in \{ 0, 0.5, 1, 2, 4 \}$. Due to chaotic mixing between the two lobes of the attractor, as $t$ increases $U^t f$ exhibits increasingly finer-scale oscillations. We recall the approximate numerical value $\Lambda \simeq 0.91$ for the positive exponent of the standard L63 system \cite{Sprott03}, which implies that the pointwise predictability horizon of $f$ is of order $1/ \Lambda \simeq 1.10$ model time units.

\begin{figure}
    \centering
    \includegraphics[width=.95\linewidth]{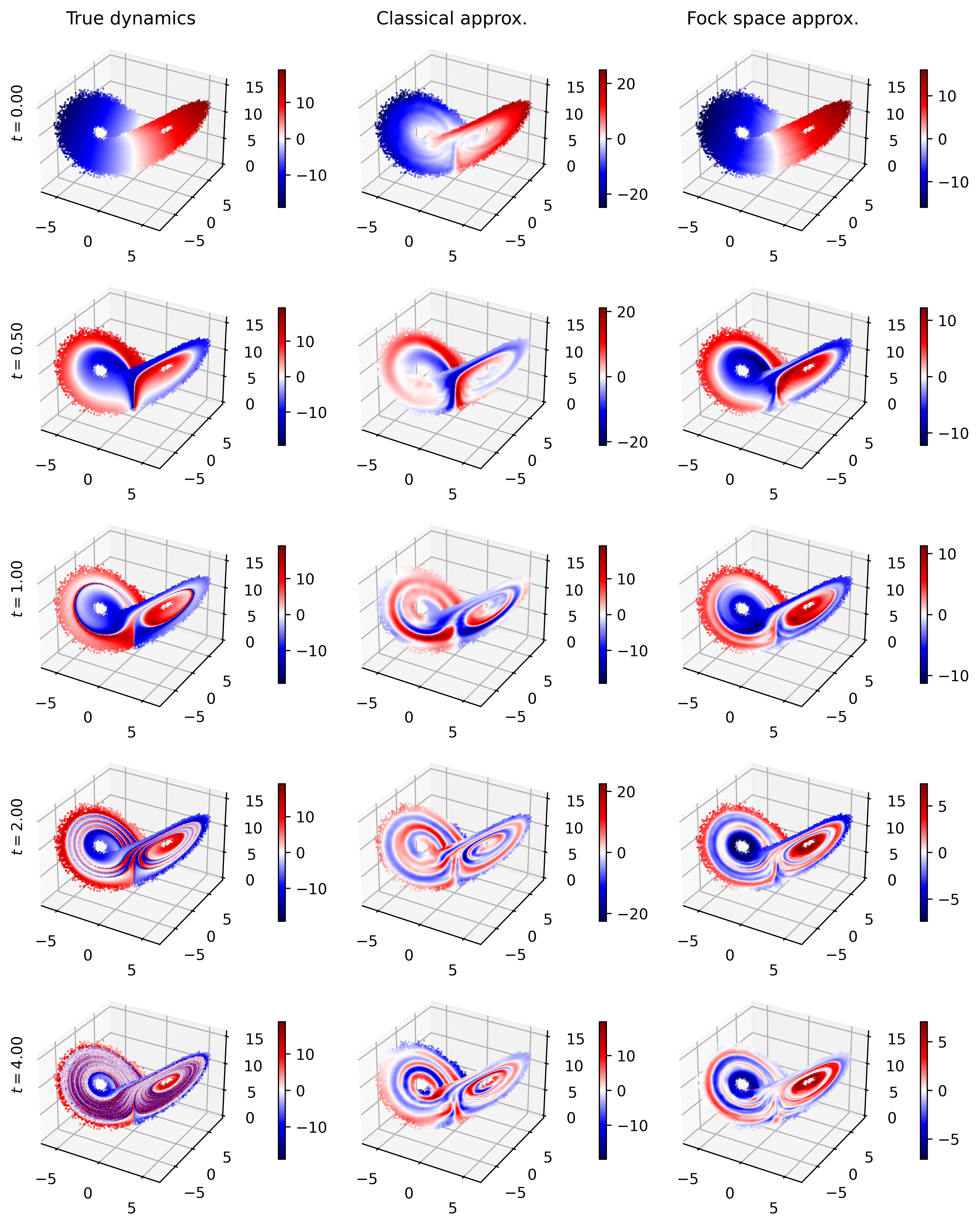}
    \caption{Time evolution of the $x$ component of the L63 state vector under the true dynamics (left column), a classical Koopman eigenfunction approximation using $2d+1$ eigenfunctions (center column), and a Fock space approximation for torus dimension $d=50$ and degree $m=4$. Rows from top to bottom show snapshots at the evolution times $t=0, 0.5, 1, 2, 4$, respectively.}
    \label{fig:evo_l63}
\end{figure}

The training dataset used in our L63 experiments consists of $N = \text{80,000}$ samples $x_n$, taken at a sampling interval of $\Delta t = 5.0$ model time units along a numerical L63 trajectory starting from an arbitrary initial condition $x_0$ near the Lorenz attractor. The sampling interval $\Delta t$ was chosen to be larger than the Lyapunov timescale $1/\Lambda$ to reduce correlations between the samples. This should aid in the convergence of the sampling measure $\mu_N$ to the invariant measure $\mu$. As noted in \cref{app:spectral_approx}, we are able to work with large sampling intervals by making use of known equations of motion to evaluate the action of the Koopman generator on functions. This is in contrast to, e.g., finite-difference methods that require small $\Delta t$ to yield accurate approximations of the generator \cites{Giannakis19,DasEtAl21}.

We compute approximate Koopman eigenfrequencies $\omega_{j,\tau,l,N}$ and eigenfunctions $\zeta_{j,\tau,l,N}$ similarly to the Stepanoff flow experiments, setting the regularization parameter and approximation space dimension to $\tau = 5 \times 10^{-7}$ and $l = 2048$, respectively. Eigenfrequency spectra and representative corresponding eigenfunctions are displayed in \cref{fig:spec,fig:evecs_l63}, respectively. Using the eigenpairs $(\omega_{j,\tau,l,N}, \zeta_{j,\tau,l,N})$ and the samples $f(x_n)$ of the prediction observable, we build classical and Fock space approximations $f^{(t)}_\text{cl}$ and $f^{(t)}_\text{Fock}$, respectively, again via a similar procedure to that used for the Stepanoff flow. The torus dimension, Fock space grading, regularization parameter, and kernel bandwidth parameters are $d = 50$, $m = 4$, $\sigma = 2 \times 10^{-6}$, and $\varepsilon = 0.1$, respectively. Scatterplots of $f^{(t)}_\text{cl}(x_n)$ and $f^{(t)}_\text{Fock}(x_n)$ on the training data are displayed in the center and right columns of \cref{fig:evo_l63}, respectively, and the corresponding errors relative to the true evolution $U^t f$ are shown in \cref{fig:err_l63}.

\begin{figure}
    \centering
    \includegraphics[width=\linewidth]{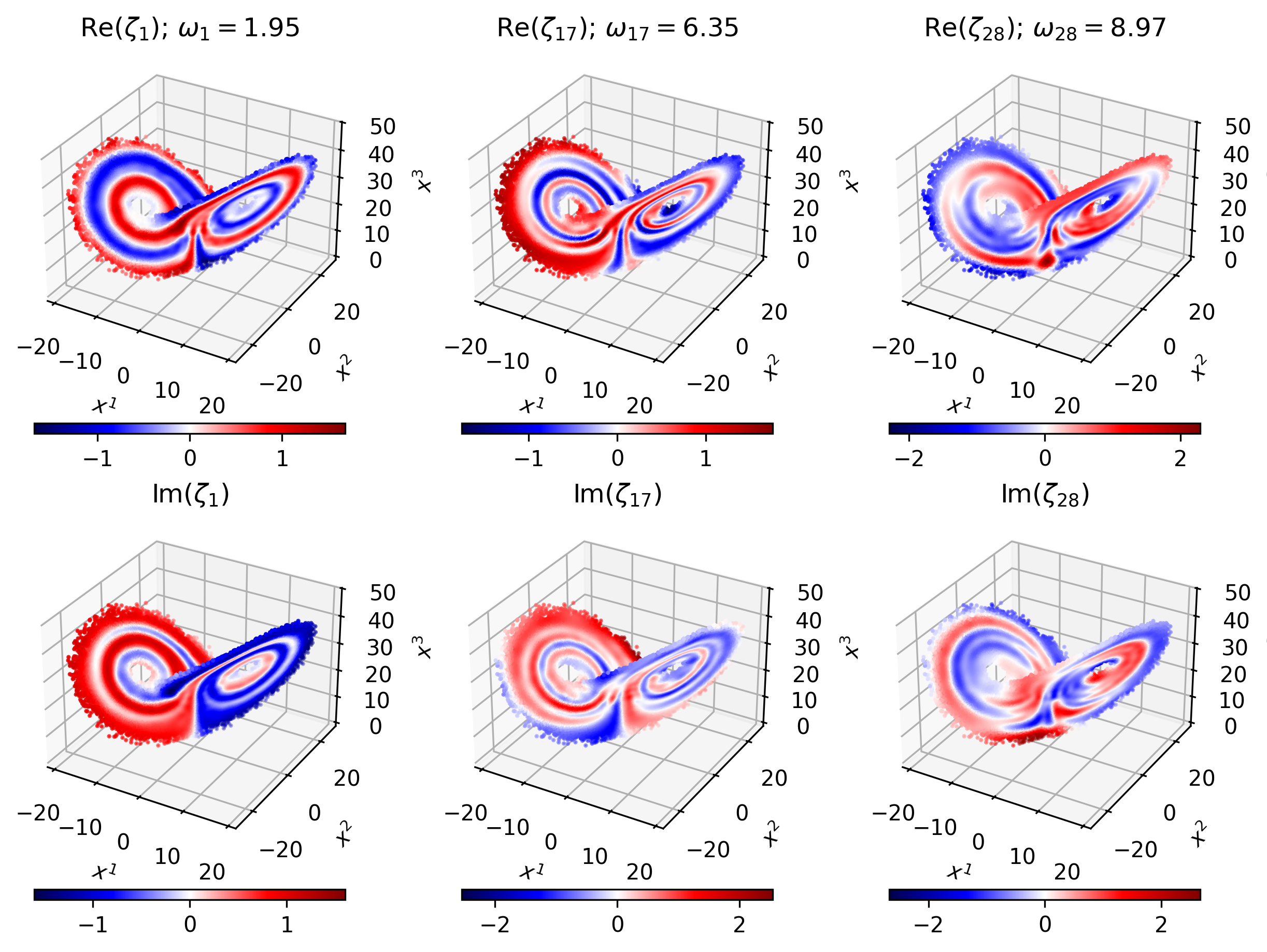}
    \caption{Real and imaginary parts of representative eigenfunctions $\zeta_{j,\tau,l,N}$ for the L63 system. The eigenfunctions shown have index $j=1, 5, 22$ with respect to the Dirichlet energy ordering, and are members of the complex-conjugate (nonconstant) eigenfunction pairs with the 1st, 2nd, and 6th largest projection amplitudes $\lvert \langle \xi_{j,\tau,l,N}, f \rangle_{H_N}\rvert$, respectively.}
    \label{fig:evecs_l63}
\end{figure}

\begin{figure}
    \centering
    \includegraphics[width=.95\linewidth]{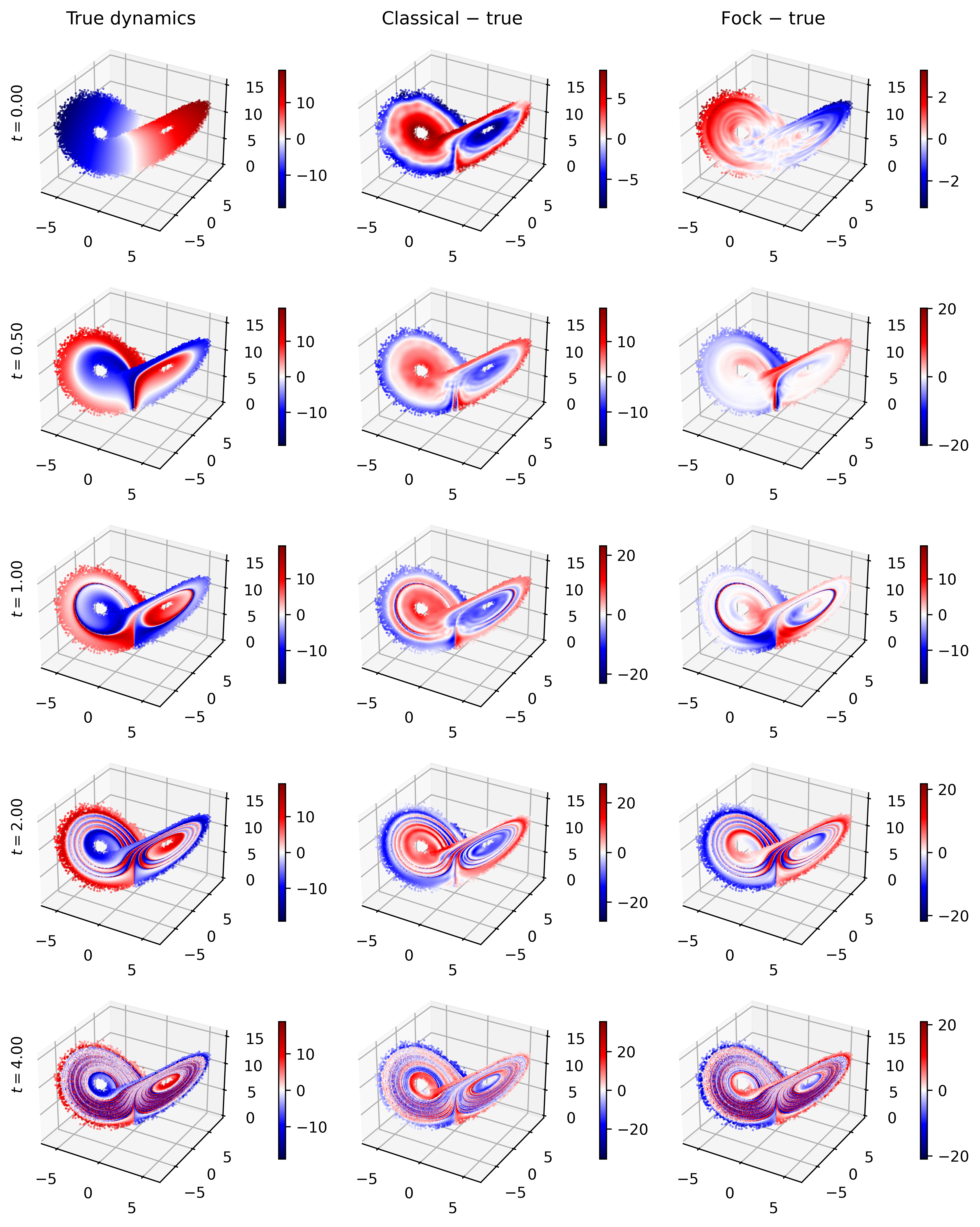}
    \caption{Errors in the classical and Fock space approximations from \cref{fig:evo_stepanoff} (center and right columns, respectively) relative to the true L63 evolution. The true evolution is plotted in the left column for reference.}
    \label{fig:err_l63}
\end{figure}

The general conclusions stemming from these results are broadly similar to what was previously observed for the Stepanoff flow. That is, amplification to the Fock space markedly improves prediction skill over a classical approximation utilizing the same number of eigenfunctions. At initialization time, $t=0$ (top rows in \cref{fig:evo_l63,fig:err_l63}), the Fock space reconstruction $f^{(0)}_\text{Fock}$ captures more accurately the linear dependence of $f$ on the $x^1$ component of the state vector than the classical reconstruction $f^{(0)}_\text{cl}$. At later times, $f^{(t)}_\text{Fock}$ provides a more accurate representation of the oscillations seen in the true evolution $U^tf$ along the radial direction in each lobe of the attractor due to chaotic mixing. Of course, since both methods utilize approximation spaces of fixed, finite dimension, they eventually fail to capture the small-scale spatial oscillatory behavior of $U^t f$ under the true dynamics. Still, even at $t=4$ (bottom rows in \cref{fig:evo_l63,fig:err_l63}), $f^{(t)}_\text{Fock}$ is arguably closer to the spatial structure of $U^t f$ than $f^{(t)}_\text{cl}$. \Cref{fig:scores}(right) shows the RMSE and AC scores for the same prediction times as in \cref{fig:evo_l63,fig:err_l63}. Over the examined time interval, the Fock space approximation outperforms the classical approximation with respect to both the RMSE and AC metrics, particularly at early times, $t \lesssim 0.5$.

\section{Discussion}
\label{sec:discussion}

Combining techniques from many-body quantum theory, RKHS theory, and Koopman operator theory, we have developed a framework for consistently approximating measure-preserving ergodic flows as infinite-dimensional topological rotation systems. A principal element of our approach is a family of weighted symmetric Fock spaces, $\fk$, generated from a 1-parameter family of RKHSs $\mathcal H_\tau$, $\tau > 0$, and endowed with coalgebra structure and commutative Banach algebra with respect to the symmetric tensor product. On the RKHSs $\mathcal H_\tau$, we build skew-adjoint, diagonalizable approximations $W_\tau$ of the Koopman generator on $L^2$ (which is generally a non-diagonalizable operator with non-trivial continuous spectrum). We dilate the unitary evolution groups generated by $W_\tau$ into infinite-dimensional rotation systems on the Banach algebra spectra of $\fk$, equipped with the weak-$^*$ topology making them compact Hausdorff spaces.

On the one hand, this construction realizes regularized approximations $e^{t W_\tau}$ of Koopman operators (which are not composition operators by flows on the original state space) as topological rotation systems on the spectrum of an algebra, with associated Koopman composition operators acting on continuous functions on the spectrum. This construction is therefore similar in spirit to the Halmos--von Neumann theorem that realizes a measure-preserving ergodic flow with pure point spectrum (i.e., diagonalizable generator) as a topological rotation system on the spectrum of an abelian $C^*$-algebra generated by the eigenspaces of the Koopman operator.

Simultaneously, the second quantization approach provides a flexible framework for approximating the Koopman evolution of observables of measure-preserving ergodic flows. The resulting approximations are positivity-preserving, amenable to data-driven numerical implementation, and consistent with the true Koopman evolution in appropriate asymptotic limits.

In more detail, we have shown that the spectrum $\sigma(\fk)$ can be partitioned into a collection of tori (of potentially infinite dimension) that are invariant under the rotational dynamics induced by $W_\tau$. Moreover, $\fk$ is isomorphic to a reproducing kernel Hilbert algebra (RKHA) $\hfk$ of continuous functions (i.e., an RKHS that is simultaneously a coalgebra and  a Banach algebra with respect to pointwise function multiplication) on appropriately chosen such tori. We developed a scheme for lifting continuous observables of the original dynamical system to polynomials of arbitrarily large degree $m$ in the Fourier bases of the spectral tori. A key aspect of this scheme is that it allows to systematically generate high-dimensional approximation spaces from tensor products of a given collection of approximate Koopman eigenfunctions (eigenfunctions of $W_\tau$), capturing information from the product structure of these eigenfunctions. Conventional subspace projection methods for Koopman operator can be viewed as $m=1$ versions of this scheme.

We numerically applied our approach to two examples of non-integrable measure-preserving ergodic systems: a Stepanoff flow on $\mathbb T^2$ as an example with smooth invariant measure and topological weak mixing (absence of continuous nonconstant Koopman eigenfunctions) and the L63 system on $\mathbb R^3$ as an example with invariant measure supported on a fractal attractor and measure-theoretic mixing. Both examples demonstrated improved prediction skill resulting from the Fock space construction over conventional subspace projection methods for Koopman operator approximation.

Previous work \cite{GiannakisEtAl24}, developed tensor network approximation techniques for Koopman and transfer operators that similarly yield algebraic amplification of regularized Koopman operators to Fock spaces. From a high-level standpoint the tensor network approach can be viewed as being ``dual'' to the second quantization scheme presented in this paper: the former is based on a lift of \emph{states} to the Fock space whereas the latter focuses on a lift of \emph{observables}. An advantageous aspect of the tensor network scheme is high computational efficiency when implemented classically. On the other hand, the second quantization scheme in this paper offers greater flexibility as it can be implemented for larger families of RKHSs $\mathcal H_\tau$, and it also yields a topological model that provides an arena for different types of approximation schemes (the polynomial lifts employed in this paper being just an example). Another noteworthy difference between the two methods is that the tensor network approach employs the full Fock space without algebra structure, whereas a centerpiece of the approach presented in this paper is a symmetric weighted Fock space with coalgebra and abelian Banach algebra structure.

In terms of future work, a natural next step would be to develop gate-based implementations of our scheme running on quantum computing platforms. One possibility in that direction would be to employ the quantum algorithm for pure point spectrum systems developed in \cite{GiannakisEtAl22} as a black box algorithm to simulate the evolution of the polynomial observables under the rotational dynamics on the tori within $\sigma(\fk)$. Essentially, this would involve defining an RKHA $\mathfrak A$ on each torus of interest, and using $\mathfrak A$ to build a quantum mechanical representation of the rotation system as described in \cref{sec:pure_point_spec}. A more direct (but likely more challenging) approach would be to build quantum computational algorithms on the RKHAs $\hfk$ without invoking auxiliary algebras such as $\mathfrak A$. Such an approach would hinge upon availability of efficient algorithms for preparing quantum states associated with the RKHA feature vectors $\varphi_{\sigma,\tau,d}(x)$ from \cref{sec:finite_dimensional_overview}. Efforts in these directions are currently underway. In a broader context, we hope that this work will stimulate further research at the interface of ergodic theory and quantum theory.

\appendix

\section*{Acknowledgments}

Dimitrios Giannakis acknowledges support from the U.S.\ Department of Defense, Basic Research Office under Vannevar Bush Faculty Fellowship grant N00014-21-1-2946, and the U.S.\ Office of Naval Research under MURI grant N00014-19-1-242, and the U.S.\ Department of Energy under grant DE-SC0025101. Mohammad Javad Latifi Jebelli and Michael Montgomery were supported as postdoctoral fellows from these grants. Philipp Pfeffer is supported by the project no. P2018-02-001 ``DeepTurb -- Deep Learning in and of Turbulence'' of the Carl Zeiss Foundation, Germany. J\"org Schumacher is supported by the European Union (ERC, MesoComp, 101052786). Views and opinions expressed are however those of the authors only and do not necessarily reflect those of the European Union or the European Research Council. Neither the European Union nor the granting authority can be held responsible for them.

\section*{Data availability statement}

Python code reproducing the numerical results in this paper is available in the repository \url{https://github.com/dg227/NLSA} under directory \url{Python/examples/fock_space}. Data obtained through this code will be made available on reasonable request.

\section{Markov semigroup construction}
\label{app:markov}

This appendix summarizes the construction of the 1-parameter family of Markov kernels $k_\tau \colon X \times X \to \mathbb R$ associated with the Markov semigroup $ \{ G_\tau \}_{\tau\geq 0}$ introduced in \cref{sec:spectral_approx}. Our exposition follows closely \cite{GiannakisValva24b}, where we refer the reader for further details.

The starting point of the construction is a kernel function $k \colon X \times X \to \mathbb R_{>0} $ that is continuous, symmetric, strictly positive-valued, and integrally strictly positive-definite. The latter condition means \cite{SriperumbudurEtAl11} that
\begin{equation}
    \label{eq:integrally_pd}
    \int_{X \times X} k(x, y) \, d(\nu \times \nu)(x, y) > 0
\end{equation}
for every finite Borel measure $\nu$. These conditions imply that $k$ is strictly positive definite. In addition, $K_\nu \colon L^2(\nu) \to L^2(\nu)$ defined as
\begin{displaymath}
    K_\nu f = \int_X k(\cdot, x) f(x) \, d\nu(x)
\end{displaymath}
is a self-adjoint, strictly positive integral operator of trace class that preserves positive functions. In this work, we are interested in integral operators $K_\nu $ associated with the invariant measure of the dynamical system, $\nu = \mu$, and the corresponding sampling measures on dynamical trajectories, $\nu = \mu_N$.

When $X$ can be continuously injected into $\mathbb R^d$ under a map $F \colon X \to \mathbb R^d$, it inherits a metric $d \colon X \times X \to \mathbb R$ from the Euclidean metric, $d(x, y) = \lVert F(x) - F(y)\rVert_2$. Then, a prototypical example satisfying~\eqref{eq:integrally_pd} is the Gaussian radial basis function (RBF) kernel,
\begin{equation}
    \label{eq:k_gauss}
    k_\varepsilon^\text{RBF}(x,y) = \exp\left(- \frac{d^2(x, y)}{\varepsilon^2}\right),
\end{equation}
where $\varepsilon > 0$ is a bandwidth parameter. Given a continuous, strictly positive function $\rho \colon X \to \mathbb R_{>0}$, a generalization of~\eqref{eq:k_gauss} is the so-called variable-bandwidth or self-tuning Gaussian kernel \cites{ZelnikManorPerona04,BerryHarlim16b},
\begin{equation}
    \label{eq:k_vb}
    k_\varepsilon^\text{vb}(x, y) = \exp\left(- \frac{d^2(x,y)}{\varepsilon^2 \rho(x)\rho(y)}\right).
\end{equation}
In the numerical experiments of \cref{sec:experiments}, we use~\eqref{eq:k_vb} with the bandwidth function
\begin{displaymath}
    \rho(x) = \left(\int_X k_{\tilde\varepsilon}(x, y) \, d\nu(y)\right)^{-1/m_\nu}.
\end{displaymath}
In the above, the bandwidth parameter $\tilde\varepsilon$ is independent from $\varepsilon$, and $m_\nu$ is a dimension parameter for the support of $\nu$. In the data-driven computations of \cref{sec:experiments}, we determine $\varepsilon$ and $\tilde \varepsilon$ automatically using a variant of a method proposed in \cite{CoifmanEtAl08}; see \cite{BerryEtAl15}*{Appendix~A} and \cite{Giannakis19}*{Algorithm~1} for further details and pseudocode. The same method produces a numerical estimate of $m_\nu$ that we use in our computations.

Intuitively, the role of $\rho$ in~\eqref{eq:k_vb} is to decrease (increase) the effective bandwidth of the kernel in regions of data space with high (low) concentration of the data distribution, $F_*\nu$. This is particularly important in the case of the L63 experiments where $F_*\mu$ associated with the invariant measure $\mu$ is singular relative to the ambient Lebesgue measure on $\mathbb R^3$.

\begin{rk}
    Bandwidth functions $\rho$ computed for the sampling measures $\mu_N$ are data-dependent, $\rho = \rho_N$, making the variable-bandwidth kernel $k_\varepsilon^\text{vb}$ also data-dependent. In such cases, weak-$^*$ convergence \eqref{eq:ergodic_av} of $\mu_N$ in the large-data limit ($N \to \infty$) implies uniform convergence of $\rho_N$ on the compact set $M$ to the bandwidth function $\rho$ computed for the invariant measure $\mu$ (e.g., \cite{VonLuxburgEtAl08}). This implies in turn uniform convergence of $k_\varepsilon^\text{vb}$ on $M \times M$, and the latter is sufficient to ensure convergence of our data-driven scheme in the large-data limit. For simplicity of exposition, in what follows we will not explicitly distinguish between data-dependent and data-independent kernels in our notation.
\end{rk}

Assume that $\nu$ is a probability measure with compact support. Using a variant of the bistochastic kernel formulation proposed in \cite{CoifmanHirn13}, we normalize the kernel $k$ to build a symmetric Markov kernel $p_\nu \colon X \times X \to \mathbb R_{>0}$, defined as
\begin{gather*}
    p_\nu(x,y) = \int_X \frac{k(x,z) k(z, y)}{d_\nu(x)q_\nu(z)d_\nu(y)} \, d\nu(z),
\end{gather*}
for $d_\nu = K_\nu 1_X$ and $q_\nu = K_\nu (1 / d_\nu)$. Clearly, $p_\nu(x, y) > 0$ and $p_\nu(x, y) = p_\nu(y, x)$, and the normalization $\int_X p_\nu(x,y) \, d\nu(y) = 1$ holds by construction. It can also be shown that $p_\nu$ is strictly positive definite, and thus has an associated RKHS $\mathcal H \subset C(X)$. It then follows that $G_\nu \colon L^2(\nu) \to L^2(\nu)$,
\begin{displaymath}
    G_\nu f = \int_X p_\nu(\cdot, x) f(x) \, d\nu(x),
\end{displaymath}
is a self-adjoint, strictly positive-definite, Markov operator of trace class. It can also be shown by strict positivity of $p_\nu$ that $G_\nu$ is ergodic; i.e., $G_\nu$ has a simple eigenvalue at 1 with corresponding eigenvector that is $\nu$-a.e.\ equal to 1.

Let us consider an eigendecomposition of $G_\nu$,
\begin{displaymath}
    G_\nu \phi_{j,\nu} = \lambda_{j,\nu} \phi_{j,\nu},
\end{displaymath}
where the eigenvalues $1 = \lambda_{0,\nu} > \lambda_{1,\nu} \geq \lambda_{2,\nu}$ are strictly positive and have finite multiplicities, and the corresponding eigenvectors form an orthonormal basis $ \{ \phi_{j,\nu} \}_j$ of $L^2(\nu)$ with $\phi_{0,\nu} = 1_X$. Each eigenvector $\phi_{j,\nu}$ has a continuous representative $\varphi_{j,\nu} \in C(X)$, where
\begin{equation}
    \label{eq:varphi}
    \varphi_{j,\nu} = \frac{1}{\lambda_{j,\nu}}\int_X p_\nu(\cdot, x) f(x) \, d\nu(x).
\end{equation}
Defining $\psi_{j,\nu} = \sqrt{\lambda_{j,\nu}} \phi_{j,\nu}$, we have that $\{ \psi_{j,\nu} \}$ is an orthonormal basis of $\mathcal H$. The RKHS $\mathcal H$ also induces a Dirichlet energy functional, $\mathcal E_\nu \colon D(\mathcal E_\nu) \to \mathbb R_{\geq 0}$, providing a notion of regularity of $L^2(\nu)$ elements in a dense domain $D(\mathcal E_\nu) \subseteq L^2(\nu)$ with representatives in $\mathcal H$. For every $f \in D(\mathcal E_\nu)$, we define
\begin{equation}
    \label{eq:dirichlet_energy}
    \mathcal E_\nu(f) = \sum_j \frac{\lvert \langle \phi_{j,\nu}, f \rangle_{L^2(\nu)}\rvert^2}{\lambda_j} - 1,
\end{equation}
and we have $\mathcal E_\nu(f) \geq 0$ with equality iff $f$ is constant $\nu$-a.e.

Next, define the self-adjoint operator $\Delta_\nu \colon D(\Delta_\nu) \to L^2(\nu)$, $D(\Delta_\nu) \subseteq L^2(\nu)$, via the eigendecomposition
\begin{displaymath}
    \Delta_\nu \phi_{j,\nu} = \eta_{j,\nu} \phi_{j,\nu}, \quad \eta_{j, \nu} = \frac{\lambda_{j,\nu}^{-1} - 1}{\lambda_{1,\nu}^{-1} - 1}.
\end{displaymath}
This operator is positive, it annihilates constant functions, and it is unbounded whenever $L^2(\nu)$ is infinite-dimensional. Moreover, $\Delta_\nu$ is normalized by convention so that its smallest nonzero eigenvalue $\eta_{1,\nu}$ is equal to 1. We interpret $\Delta_\nu$ as a Laplace-type operator that generates a Markov semigroup $\{G_{\tau,\nu} \}_{\tau \geq 0}$ on $L^2(\nu)$ with $G_{\tau,\nu} = e^{-\tau \Delta_\nu}$. The following result is adapted from \cite{DasEtAl21}*{Theorem~1}.

\begin{prop}
    \label{prop:markov_semigroup}
    With notation and assumptions as above, $\{G_{\tau,\nu} \}_{\tau \geq 0}$ is a strongly continuous semigroup of self-adjoint, strictly positive, Markov operators on $L^2(\nu)$. Moreover, the following hold for $\tau>0$ and any compact set $M \subseteq X$ containing the support of $\nu$:
    \begin{enumerate}[(i)]
        \item $G_{\tau,\nu}$ is a trace class integral operator induced by a continuous transition kernel $k_{\tau, \nu} \colon M \times M \to \mathbb R_{\geq 0}$,
            \begin{displaymath}
            G_{\tau,\nu} f = \int_X k_{\tau,\nu}(\cdot, x) f(x) \, d\nu(x), \quad k_{\tau,\nu}(x, y) = \sum_j \lambda_{j,\tau,\nu} \varphi_{j,\nu}(x) \varphi_{j,\nu}(y), \quad \lambda_{j,\tau,\nu} = e^{-\tau \eta_{j,\nu}},
            \end{displaymath}
            where the sum over $j$ in the expression for $k_{\tau,\nu}(x, y)$ converges uniformly for $(x, y)\in M \times M$.
        \item The kernel $k_{\tau,\nu}$ is strictly positive-definite, and has an associated RKHS $\mathcal H_{\tau,\nu} \subseteq C(M)$ whose restriction to $\supp(\nu)$ forms a dense subspace of $L^2(\nu)$.
        \item $\mathcal H_{\tau,\nu}$ is a subspace of $\mathcal H$ and a subspace of $\mathcal H_{\tau', \nu}$ for every $\tau' < \tau$.
        \item If the kernel $k\rvert_{M \times M}$ is $C^r$, then $\mathcal H_{\tau,\nu}$ is a subspace of $C^r(M)$.
    \end{enumerate}
\end{prop}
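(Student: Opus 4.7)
The plan is to deduce every assertion from the spectral definition $G_{\tau,\nu}=e^{-\tau\Delta_\nu}$ on $L^2(\nu)$, together with the integral representation~\eqref{eq:varphi} of the continuous eigenfunctions $\varphi_{j,\nu}$. The single quantitative engine for the estimates is the super-polynomial decay of the multiplier $\lambda_{j,\tau,\nu}=e^{-\tau\eta_{j,\nu}}$: since $\eta_{j,\nu}$ is an affine function of $\lambda_{j,\nu}^{-1}$ and $y^p e^{-\tau y}$ is uniformly bounded on $[0,\infty)$ for every $p\geq 0$, one obtains $\sup_j \lambda_{j,\nu}^{-p}e^{-\tau\eta_{j,\nu}}<\infty$ together with $\lim_{j\to\infty}\lambda_{j,\nu}^{-p}e^{-\tau\eta_{j,\nu}}=0$. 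This is exactly the compensation needed against the unboundedness of $K_\nu^{-1}$ that appears implicitly throughout.

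The semigroup law, strong continuity, self-adjointness, and strict operator-theoretic positivity of $G_{\tau,\nu}$ are immediate from the bounded Borel functional calculus on $\Delta_\nu$, while $G_{\tau,\nu}1_X=1_X$ is a consequence of $\eta_{0,\nu}=0$ and $\phi_{0,\nu}=1_X$. For (i), I would first bound $\sup_{x\in M}|\varphi_{j,\nu}(x)|\leq C\lambda_{j,\nu}^{-1}$ by Cauchy--Schwarz in~\eqref{eq:varphi}, using continuity of $p_\nu$ on $M\times\supp(\nu)$. Combining this with the Mercer identity $\sum_j\lambda_{j,\nu}\varphi_{j,\nu}(x)^2=p_\nu(x,x)$ (uniformly bounded on $M$) and the factorisation $e^{-\tau\eta_{j,\nu}}=(\lambda_{j,\nu}^{-1}e^{-\tau\eta_{j,\nu}})\cdot\lambda_{j,\nu}$, whose bracket is bounded and vanishes at infinity, yields absolute and uniform convergence of the series on $M\times M$. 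Mercer's theorem then identifies the sum with the continuous integral kernel of $G_{\tau,\nu}$, and trace-class membership follows from summability of $\lambda_{j,\tau,\nu}$.

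Strict positive-definiteness of $k_{\tau,\nu}$ in (ii) follows from strict positivity of all $\lambda_{j,\tau,\nu}$ together with completeness of $\{\phi_{j,\nu}\}$ in $L^2(\nu)$, and density of $\mathcal H_{\tau,\nu}\rvert_{\supp(\nu)}$ in $L^2(\nu)$ is automatic because the RKHS contains every $\varphi_{j,\nu}$. The inclusions in (iii) are applications of Aronszajn's inclusion theorem to the difference kernels: $k_{\tau',\nu}-k_{\tau,\nu}$ is positive-definite for $\tau'<\tau$ by nonnegativity of its spectral coefficients $e^{-\tau'\eta_{j,\nu}}-e^{-\tau\eta_{j,\nu}}$, and $c\,p_\nu-k_{\tau,\nu}$ is positive-definite for $c:=\sup_j\lambda_{j,\nu}^{-1}e^{-\tau\eta_{j,\nu}}$, which is finite by the super-polynomial-decay estimate. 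For (iv), I would differentiate~\eqref{eq:varphi} under the integral sign to obtain $\sup_{x\in M}|\partial_x^\alpha\varphi_{j,\nu}(x)|\leq C_\alpha\lambda_{j,\nu}^{-1}$ for $|\alpha|\leq r$ (using $C^r$-regularity of $p_\nu$ inherited from $k$), so the formal series for $\partial_x^\alpha\partial_y^\beta k_{\tau,\nu}$ is termwise bounded by $C_{\alpha,\beta}\lambda_{j,\nu}^{-2}e^{-\tau\eta_{j,\nu}}$, which is summable; hence $k_{\tau,\nu}\in C^r(M\times M)$ and the standard RKHS regularity result yields $\mathcal H_{\tau,\nu}\subseteq C^r(M)$.

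The main obstacle is the Markov property, i.e., positivity preservation $G_{\tau,\nu}f\geq 0$ for $f\geq 0$; this is the one assertion not directly read off the spectral multipliers. Because $\eta(\lambda)=c(\lambda^{-1}-1)$ is not a Bernstein function of $-\log\lambda$, the standard subordination principle linking Markov semigroups to Bernstein functional calculus does not apply, and the clean power-series argument available for the linearised generator $(I-G_\nu)/c$ is also unavailable. My approach is to prove pointwise nonnegativity $k_{\tau,\nu}(x,y)\geq 0$ via a Yosida-type discretisation: using the explicit resolvent identity $(I+s\Delta_\nu)^{-1}=cG_\nu(sI+(c-s)G_\nu)^{-1}$ for $s\in(0,c)$, I would analyse the kernel of $(sI+(c-s)G_\nu)^{-1}$ via its spectral expansion together with a careful sign-compensation argument (the Neumann series has alternating signs in $G_\nu$, yet the cancellation produced by the strict positivity of $p_\nu$ and the Markov structure of $G_\nu$ is expected to yield nonnegativity of the full kernel), establish the Markov property of each resolvent, and then transfer it to $G_{\tau,\nu}$ via the strong convergence $(I+(\tau/n)\Delta_\nu)^{-n}\to e^{-\tau\Delta_\nu}$. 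Making the kernel-sign compensation rigorous is the hardest technical step, and I expect it to require the detailed continuity and strict-positivity structure of the base kernel $k$ rather than purely functional-analytic arguments.
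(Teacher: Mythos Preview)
The paper does not prove this proposition; immediately before the statement it says ``The following result is adapted from \cite{DasEtAl21}*{Theorem~1}'' and gives no argument of its own. There is therefore no in-paper proof to compare your proposal against, and the honest comparison is simply that the authors defer entirely to an external reference.

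On the substance of your sketch: your treatment of the semigroup law, strong continuity, self-adjointness, strict operator positivity, and items~(i)--(iv) is the standard Mercer/Aronszajn package and is correct in outline. The decay estimate $\sup_j \lambda_{j,\nu}^{-p}e^{-\tau\eta_{j,\nu}}<\infty$ is exactly the right engine, and the difference-kernel argument for~(iii) via Aronszajn's inclusion theorem is clean. You have also correctly isolated positivity preservation ($G_{\tau,\nu}f\geq 0$ for $f\geq 0$, equivalently $k_{\tau,\nu}\geq 0$) as the one assertion that is \emph{not} a formal consequence of the spectral multipliers.

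Your proposed attack on that point, however, has a genuine gap. The Neumann expansion of $(sI+(c-s)G_\nu)^{-1}$ for $0<s<c$ has alternating signs in powers of $G_\nu$, and for a generic strictly positive Markov kernel there is no mechanism forcing these to cancel into a nonnegative kernel; strict positivity of $p_\nu$ alone does not supply one. The ``sign-compensation'' step you describe is not an argument but a hope, and I would not expect it to close without importing substantially more structure. Since the paper itself outsources the result, the right move is to consult \cite{DasEtAl21} directly: either the Markov property there is obtained by a different route (for instance via structure specific to the bistochastic normalisation, or via a limit from a semigroup whose positivity is already known), or the claim as reproduced here is stronger than what is actually established.
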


Note that under \cref{prop:markov_semigroup} $G_{\tau,\nu}$ admits the factorization $G_{\tau,\nu} = K_{\tau,\nu}^* K_{\tau,\nu}$, where $K_{\tau,\nu} \colon L^2(\nu) \to \mathcal H_{\tau,\nu}$ is the integral operator with (integral) kernel $k_{\tau,\nu}$ and dense range in $\mathcal H_{\tau,\nu}(\nu)$, and the adjoint $K_{\tau,\nu}^*$ implements the inclusion map (see \cref{sec:spectral_approx}). Moreover, $\{ \psi_{j,\tau,\nu} \}_j$ with
\begin{displaymath}
    \psi_{j,\tau,\nu} = \lambda_{j,\tau,\nu}^{-1/2} K_{\tau,\nu} \phi_{j,\nu} = \lambda_{j,\tau,\nu}^{1/2} \varphi_{j,\nu}
\end{displaymath}
forms an orthonormal basis of $\mathcal H_{\tau,\nu}(\nu)$.

Setting $\nu = \mu$ leads to the kernel $k_\tau \equiv k_{\tau,\mu}$, RKHS $\mathcal H_\tau \equiv \mathcal H_{\tau,\mu}$, and integral operators $K_\tau \equiv K_{\tau,\mu}$ and $G_\tau \equiv G_{\tau,\mu}$ used in the main text. \Crefrange{prty:k1}{prty:k4} are then satisfied as a consequence of \cref{prop:markov_semigroup}. In data-driven contexts we set $\nu = \mu_N$, leading to $k_{\tau,N} \equiv k_{\tau,\mu_N}$, $\mathcal H_{\tau,N} = \mathcal H_{\tau,\mu_N}$, $K_{\tau,N} \equiv K_{\tau,\mu_N}$, and $G_{\tau,N} \equiv G_{\tau,\mu_N}$. Spectral convergence of $G_{\tau,N}$ to $G_\tau$ as $N \to \infty$ occurs in the form of convergence of eigenvalues (including multiplicities), $\lim_{\tau\to 0^+}\lambda_{j,\tau,N} = \lambda_{j,\tau}$ where $\lambda_{j,\tau,N} \equiv \lambda_{j,\tau,\mu_N}$ and $\lambda_{j,\tau} \equiv \lambda_{j,\tau}$, and uniform convergence of the continuous representatives $\varphi_{j,\tau,N} \equiv \varphi_{j,\tau,\mu_N}$ to $\varphi_{j,\tau} \equiv \varphi_{j,\tau,\mu}$ on $M$ for appropriately chosen eigenvectors $\phi_{j,\tau} \equiv \phi_{j,\tau,\mu}$ and $\phi_{j,\tau,N} = \varphi_{j,\tau,\mu_N}$ of $G_\tau$ and $G_{\tau,N}$, respectively.

\section{Approximation of the identity by kernel integral operators}
\label{app:approx_id}

In \cref{sec:fock_consistency}, we use the following lemma to deduce that powers of continuous kernels satisfying \eqref{eq:kappa_decay} provide an $L^2$ approximation of the identity when acting on continuous functions.

\begin{lem}
    Let $X$ be a compact Hausdorff space and $\mu$ a Borel probability measure with full support on $X$. Let $\kappa\colon X \times X \to \mathbb R_{>0}$ be a continuous kernel such that $\kappa(x,x) > \kappa(x,y)$ for all $x,y \in X$ and $x \neq y$. For $m \in \mathbb N$, define $d_m(x) = \int_X \kappa(x,\cdot)^m \, d\mu$ and $p_m(x,y) = \frac{\kappa(x,y)^m}{d_m(x)}$. Then for $f \in C(X)$,
    $$\lim_{m \to \infty} \left\| f - \int_X p_m(\cdot,y) f(y) \, d\mu(y)\right\|_{L^2(\mu)} = 0.$$
\end{lem}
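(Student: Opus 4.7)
The plan is to show pointwise convergence $P_m f(x) \to f(x)$ on all of $X$ and then invoke dominated convergence, using that $P_m f$ is a probability-kernel average against a finite measure, so a uniform $C(X)$ bound is immediate.

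First I would observe that $p_m(x,\cdot)$ is a probability density relative to $\mu$ for each $x$, and hence $P_m 1_X = 1_X$. This lets me write
\begin{displaymath}
P_m f(x) - f(x) = \int_X p_m(x,y)\bigl(f(y) - f(x)\bigr)\, d\mu(y),
\end{displaymath}
which already implies the uniform bound $\lVert P_m f - f\rVert_{C(X)} \leq 2 \lVert f\rVert_{C(X)}$; since $\mu$ is a probability measure, this gives a constant $\mu$-integrable dominating function for $\lvert P_m f - f \rvert^2$, so once I have pointwise convergence the conclusion follows from the dominated convergence theorem.

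The core step is therefore to prove the concentration property: for each fixed $x \in X$ and each open neighborhood $V$ of $x$, $\int_{X \setminus V} p_m(x,y)\,d\mu(y) \to 0$ as $m \to \infty$. Since $X$ is compact Hausdorff, $\kappa$ is continuous, and $X \setminus V$ is compact with $x \notin X \setminus V$, the hypothesis $\kappa(x,x) > \kappa(x,y)$ for $y \neq x$ gives
\begin{displaymath}
\alpha := \max_{y \in X \setminus V} \kappa(x,y) < \kappa(x,x).
\end{displaymath}
By continuity, there exists an open neighborhood $W \subseteq V$ of $x$ and some $\eta > 0$ with $\kappa(x,x) - \eta > \alpha$ and $\kappa(x,y) \geq \kappa(x,x) - \eta$ for all $y \in W$. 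Full support of $\mu$ gives $\mu(W) > 0$, so
\begin{displaymath}
d_m(x) \geq (\kappa(x,x) - \eta)^m \mu(W), \qquad \int_{X \setminus V} \kappa(x,y)^m\, d\mu(y) \leq \alpha^m,
\end{displaymath}
and the ratio $(\alpha/(\kappa(x,x)-\eta))^m / \mu(W)$ decays to $0$. Combined with uniform continuity of $f$ on the compact Hausdorff space $X$ (which allows me to choose $V$ so that $\lvert f(y) - f(x)\rvert < \varepsilon$ on $V$), this gives $\lvert P_m f(x) - f(x)\rvert \to 0$ for every $x \in X$.

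The main technical wrinkle is handling the $x$-dependence of the constants $\alpha$, $\eta$, $W$ in the concentration estimate: the bound I obtain is pointwise, not uniform in $x$, so I cannot upgrade to $C(X)$ convergence directly. Fortunately $L^2$ convergence does not require uniformity; pointwise convergence together with the $2\lVert f\rVert_{C(X)}$ envelope is enough to close the argument via dominated convergence. A minor point to verify along the way is that $d_m$ is bounded away from $0$ on $X$ (follows from the same argument applied uniformly using compactness of $X$), which ensures $p_m$ and $P_m f$ are well-defined continuous functions for every $m$.
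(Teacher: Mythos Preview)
Your proposal is correct and follows essentially the same approach as the paper: establish pointwise convergence via a concentration estimate (the mass of $p_m(x,\cdot)$ outside any neighborhood of $x$ decays geometrically because $\max_{y\notin V}\kappa(x,y)<\kappa(x,x)$ and $\mu$ has full support), then upgrade to $L^2$ convergence by dominated convergence using the uniform bound $\lvert P_m f\rvert\le\lVert f\rVert_{C(X)}$. One cosmetic remark: your appeal to ``uniform continuity'' is unnecessary (and slightly awkward absent a specified metric/uniformity)---ordinary continuity of $f$ at the fixed point $x$ already yields the neighborhood $V$ you need, which is exactly what the paper uses.
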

\begin{proof}
    Set $\kappa_x = \kappa(x,\cdot)$ and define the sets $E_{x,\epsilon} = \kappa_x^{-1}((\kappa(x,x)-\epsilon,\kappa(x,x)])$. First, $E_{x,\epsilon}$ provide a basis for the topology of $X$. That is, for every open neighborhood $U \ni x$, $\kappa(x,X\setminus U) \subset [0,\kappa(x,x))$ is compact hence there is an $\epsilon >0$ such that $x \in E_{x,\epsilon} \subset U$. We also have the estimates
\begin{align*}
    \int_{E_{x,\epsilon}} \kappa_x^m\, d\mu &\geq \mu(E_{x,\epsilon} \setminus E_{x,\epsilon/2}) (\kappa(x,x)-\epsilon)^m +\mu( E_{x,\epsilon/2}) (\kappa(x,x)-\epsilon/2)^m,\\
    \int_{X \backslash E_{x,\epsilon}} \kappa_x^m \, d\mu &\leq \mu(X\setminus E_{x,\epsilon})(\kappa(x,x)-\epsilon)^m.
\end{align*}
Therefore, for every $x \in X$ there is a constant $C_x$ depending on $x$ and an $m_{x,0} \in \mathbb N$ such that $d_m(x) \geq C_x (\kappa(x,x)-\epsilon/2)^m$ for $m \geq m_{x,0}$. As a result, for every $m > m_{x,0}$, we have
\begin{displaymath}
    p_m(x, \cdot)\rvert_{X\setminus E_{x,\epsilon}} \leq \mu(X \setminus E_{x,\epsilon}) \frac{(\kappa(x, x) -\epsilon)^m}{C_x (\kappa(x,x) - \epsilon/2)^m},
\end{displaymath}
so $p_m(x,\cdot)|_{X\setminus E_{x,\epsilon}}$ converges uniformly to zero as $m \to \infty$.

We will first show pointwise convergence. Fix $x \in X$, $\delta>0$, and pick $\epsilon >0$ such that $\sup_{y \in E_{x,\epsilon}} \lvert f(y) - f(x)\rvert < \delta$. Choose also $n \in \mathbb N$ such that $p_m(x,\cdot)|_{X\setminus E_{x,\epsilon}} < \delta$ for all $m \geq n$. Define $f_m = \int_X p_m(\cdot,y) f(y)\, d\mu(y)$ and observe that
$$\lvert f_m(x) - f(x) \rvert \leq \delta \|f\|_{C(X)} \mu(X\backslash E_{x,\epsilon}) + \left\lvert f(x) -\int_{E_{x,\epsilon}} p_m(x,y)f(y)\, d\mu(y)\right\rvert, \quad \forall m \geq n.$$
Set $p_{x,\epsilon,m} = \int_{E_{x,\epsilon}} p_m(x,\cdot) \, d\mu$. Since $\int_X p_m(x,\cdot) \, d\mu =1$ and $\int_{X\setminus E_{x,\epsilon}} p_m(x,\cdot) \, d\mu \leq \delta$, $1-p_{x,\epsilon,m} \leq \delta$. Therefore,
\begin{multline*}
    \left\lvert f(x) - \int_{E_{x,\epsilon}} p_m(x,y)f(y) \, d\mu(y)\right\rvert\\
    \begin{aligned}
        &= \left\lvert f(x) - \int_{E_{x,\epsilon}} p_m(x, y) (f(y) - f(x)) \, d\mu(y) - f(x) \int_{E_{x,\epsilon}} p_m(x, y) \, d\mu(y)\right\rvert \\
        &\leq \delta + \lvert f(x)\rvert (1 - p_{x,\epsilon,m}) \leq \delta + \lVert f\rVert_{C(X)} \delta,
    \end{aligned}
\end{multline*}
giving
\begin{displaymath}
    \lvert f_m(x) - f(x) \rvert \leq (2\|f\|_{C(X)} + 1) \delta.
\end{displaymath}
Pointwise convergence as $m \to \infty$ follows since $\delta$ was arbitrary,
\begin{displaymath}
    \lim_{m\to\infty} \lvert f_m(x) - f(x)\rvert = 0, \quad \forall x \in \supp(\mu).
\end{displaymath}
Finally, we conclude $L^2(\mu)$ convergence by the dominated convergence theorem as
$$|f_m(x)| \leq \int_X |f(y)|p_m(x,y) \, d\mu(y) \leq \|f\|_{C(X)}.$$
\end{proof}

\section{Spectral approximation of the generator}
\label{app:spectral_approx}

In this appendix, we summarize the spectral approximation scheme for the Koopman generator from the paper \cite{GiannakisValva24b}, used here in the numerical experiments in \cref{sec:experiments}.

Let $\tilde H = \{ f \in H: \int_X f \, d\mu = 0 \}$ be the subspace of $H$ consisting of zero-mean functions with respect to the invariant measure. By ergodicity, $\tilde H = \spn \{1_X \}^\perp$ has codimension 1. Define also $H_V = D(V) \cap \tilde H$, and equip this space with the ``graph'' inner product
\begin{displaymath}
    \langle f, g\rangle_{H_V} = \langle f, g\rangle_H + \langle V f, Vg\rangle_H.
\end{displaymath}
Since $V$ is a closed operator mapping $\spn\{1_X\}$ to itself, $(H_V, \langle \cdot, \cdot \rangle_{H_V})$ is a Hilbert space.

Next, for $z>0$, define the bounded, continuous, antisymmetric function $q_z \colon i \mathbb R \to i \mathbb R$, where
\begin{displaymath}
    q_z(i\omega) = \frac{i\omega}{z^2 + \omega^2}, \quad \ran q_z = i \left[ -\frac{1}{2z}, \frac{1}{2z}\right].
\end{displaymath}
Define also the bounded operator $Q_z = q_z(V\rvert_{\tilde H}) \in B(\tilde H)$ via the Borel functional calculus. Observe that $Q_z = R_z^* V R_z$, where $R_z = (z - V \rvert_{\tilde H})^{-1}$ is the resolvent of $V\rvert_{\tilde H} \colon \tilde H \to \tilde H$ at $z$.

For $\tau>0$ and $G_\tau \colon H \to H$ constructed as in \cref{sec:spectral_approx,app:markov}, define the compact, skew-adjoint operators
\begin{displaymath}
    Q_{z,\tau} = R_z^* V_\tau R_z, \quad V_\tau = G_{\tau/2} V G_{\tau/2}.
\end{displaymath}
Note that well-definition and compactness of $Q_{z,\tau}$ follows from the fact that $G_{\tau/2}$ is a Markov operator induced by a $C^1$ kernel $k_\tau$ (in particular, $\ran G_\tau \rvert_{\tilde H} \subset H_V$), making $V G_{\tau/2} \rvert_{\tilde H}$ a bounded operator. Since $G_\tau \to \Id$ as $\tau \to 0^+$, we view $Q_{z,\tau}$ as a compact approximation of $Q_z = q_z(V)$.

Define now the domain $\Omega_z = (-\infty, -z] \cup [z, \infty)$, and observe that $\tilde q_z := q_z \rvert_{\Omega_z}$ is invertible with (unbounded) inverse $\tilde{q}_z^{-1}\colon i[-(2z)^{-1}, (2z)^{-1}] \setminus \{0\} \to i \mathbb R$,
\begin{displaymath}
    \tilde{q}_z^{-1}(i\omega) = i \frac{1 + \sqrt{1 - 4z^2\omega^2}}{2\omega}.
\end{displaymath}
As $z \to 0^+$, the domain $\Omega_z$ increases to $i \mathbb R$, so we view $\tilde q_z^{-1} \circ q_z$ as an approximation of the identity. Correspondingly, we view $\tilde V_z := \tilde q_z^{-1} (q_z(V\rvert_{\tilde H})) \equiv \tilde q_z^{-1}(Q_z)$ as an unbounded, skew-adjoint approximation of $V\rvert_{\tilde H}$.

Let $\tilde V_{z,\tau} = b_z(Q_{z,\tau})$ where $b_z \colon i \mathbb R \to i \mathbb R$ is any continuous extension of $\tilde q_z^{-1}$. Using the orthogonal projection $\proj_{\tilde H} \colon H \to \tilde H$, extend $\tilde V_{z,\tau}$ to the skew-adjoint operator $V_{z,\tau} = \tilde V_{z,\tau} \proj_{\tilde H}$ with dense domain $D(V_{z,\tau}) \subset H$. In \cite{GiannakisValva24}*{Theorem 6} it is shown:

\begin{prop}
    $V_{z,\tau}$ is a skew-adjoint operator with compact resolvent that converges to $V$ in strong resolvent sense in the iterated limit of $z \to 0^+$ after $\tau \to 0^+$. In particular, \crefrange{prty:v1}{prty:v5} hold for $V_{z,\tau}$ in that limit.
\end{prop}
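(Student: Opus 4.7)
The plan is to establish the four claimed structural properties (skew-adjointness, compact resolvent, the listed \crefrange{prty:v1}{prty:v5}, and strong resolvent convergence) in stages, leveraging the Borel functional calculus applied to $Q_{z,\tau}$ throughout.

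First I would handle skew-adjointness and compactness of the resolvent, which are direct consequences of the construction. Since $V_\tau = G_{\tau/2} V G_{\tau/2}$ is skew-adjoint and $R_z$ is the resolvent of $V\rvert_{\tilde H}$ at a real point $z>0$, the operator $Q_{z,\tau} = R_z^* V_\tau R_z$ is bounded and skew-adjoint, and it is compact because $V_\tau$ is compact (by the argument sketched in \cref{sec:spectral_approx} using compactness of $G_{\tau/2}$ and boundedness of $\vec V \cdot \nabla \colon C^1(M) \to C(M)$). Since $b_z \colon i\mathbb R \to i\mathbb R$ is a Borel-measurable extension of $\tilde q_z^{-1}$, the Borel functional calculus on $\tilde H$ produces a skew-adjoint operator $\tilde V_{z,\tau} = b_z(Q_{z,\tau})$, and extending by zero on $\spn\{1_X\}$ preserves skew-adjointness of $V_{z,\tau}$. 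Compactness of the resolvent follows from the spectral theorem for compact operators: $Q_{z,\tau}$ has discrete spectrum accumulating only at $0$, and since $b_z(i\omega) \to \pm i\infty$ as $\omega \to 0$, the eigenvalues $b_z(i\omega_j)$ of $\tilde V_{z,\tau}$ accumulate only at $\infty$, so $(w - V_{z,\tau})^{-1}$ is compact for any $w \in \rho(V_{z,\tau})$.

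Next I would verify the remaining algebraic properties. For \cref{prty:v3}, reality of $V_{z,\tau}$ reduces to showing $Q_{z,\tau}$ commutes with complex conjugation, which follows because $G_\tau$ is induced by a real kernel, $V$ is a real operator on its domain, and $R_z$ (with $z$ real) is real. Then, with $b_z$ chosen to satisfy $b_z(-i\omega) = -b_z(i\omega)$ (which any natural continuous extension of $\tilde q_z^{-1}$ satisfies), $b_z(Q_{z,\tau})$ is real. Property \cref{prty:v4} is immediate from the definition $V_{z,\tau} = \tilde V_{z,\tau} \proj_{\tilde H}$, since $\proj_{\tilde H} 1_X = 0$.

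The main step, and the real content of the proposition, is the strong resolvent convergence \cref{prty:v5}, which I would prove by splitting the iterated limit into two pieces. For fixed $z>0$, I would first show $Q_{z,\tau} \to Q_z$ strongly (in fact in operator norm, since $V_\tau R_z = \iota \vec V \cdot \nabla K_{\tau/2} G_{\tau/2} R_z \to V R_z$ strongly on a compact set in the range of $R_z$, and sandwiching by $R_z^*$ improves this). Then for any $w \in \mathbb C \setminus i\mathbb R$, the function $h_w(s) = (w - b_z(s))^{-1}$ is bounded and continuous on $i\mathbb R$ (since $b_z$ is $i\mathbb R$-valued), so $(w - V_{z,\tau})^{-1}\proj_{\tilde H} = h_w(Q_{z,\tau})$ converges strongly to $h_w(Q_z) = (w - \tilde V_z)^{-1}\proj_{\tilde H}$ by strong continuity of the functional calculus under strong convergence of self-adjoint operators (\cite{Oliveira09}*{Proposition~10.1.9}). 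Handling the $\spn\{1_X\}$ component separately gives $V_{z,\tau} \to \tilde V_z$ in strong resolvent sense as $\tau \to 0^+$. For the outer limit, I would show $\tilde V_z \to V$ in strong resolvent sense as $z \to 0^+$, using that $\tilde q_z^{-1} \circ q_z = \Id$ on $\Omega_z$ and $\Omega_z \nearrow i\mathbb R \setminus \{0\}$, so the spectral projectors $\tilde E(\Omega_z)$ of $V$ converge strongly to $\Id$ on $\tilde H$; equivalently, for $f \in D(V) \cap \tilde H$, $\tilde V_z f - Vf = \int_{i\mathbb R \setminus \Omega_z} \lambda \, d\tilde E(\lambda) f \to 0$. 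The hardest technical point will be controlling the non-uniform behavior of $b_z$ near zero across the iterated limit—specifically ensuring that the spectral measure of $Q_{z,\tau}$ does not concentrate mass near the origin in a way that causes $\tilde V_{z,\tau}$ to diverge, which is exactly why the limits must be taken $\tau \to 0^+$ before $z \to 0^+$ rather than simultaneously or in the reverse order.
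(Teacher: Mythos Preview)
The paper does not actually prove this proposition: it is stated immediately after the sentence ``In \cite{GiannakisValva24}*{Theorem~6} it is shown:'' and is simply quoted as a result from that reference, with no argument given in the present paper. There is therefore no in-paper proof to compare your proposal against.

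Your outline is a reasonable reconstruction of how such a proof would go, and the architecture---functional calculus applied to the compact skew-adjoint operator $Q_{z,\tau}$, followed by a two-stage strong-resolvent limit ($\tau \to 0^+$ then $z \to 0^+$)---is the natural one given the setup in \cref{app:spectral_approx}. One technical point is worth flagging: you invoke $b_z(i\omega) \to \pm i\infty$ as $\omega \to 0$ to obtain compact resolvent, which is correct for $\tilde q_z^{-1}$, but the paper's phrasing ``$b_z \colon i\mathbb R \to i\mathbb R$ is any continuous extension of $\tilde q_z^{-1}$'' is slightly loose, since $\tilde q_z^{-1}$ has a pole at $0$. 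The resolution is that $Q_{z,\tau}$ has trivial kernel on $\tilde H$ (by injectivity of $G_{\tau/2}$ and ergodicity), so $0$ is only an accumulation point of its spectrum, not an eigenvalue; the Borel calculus then produces an unbounded $\tilde V_{z,\tau}$ with eigenvalues tending to infinity, and the resolvent function $h_w(s) = (w - b_z(s))^{-1}$ extends continuously to $0$ by taking the value $0$ there. Your sketch implicitly relies on this but does not spell it out; the cited reference presumably does.
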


Let us now consider an eigendecomposition of the compact, skew-adjoint operator $Q_{z,\tau}$:
\begin{equation}
    \label{eq:q_z_tau_eigen}
    Q_{z,\tau} \xi_{j,z,\tau} = \beta_{j,z,\tau} \xi_{j,z,\tau},
\end{equation}
where $\beta_{j,z,\tau} \in i \mathbb R$ and the eigenvectors $\xi_{j,z,\tau}$ form an orthonormal basis of $\tilde H$. We index the eigenvalues by $j \in \mathbb Z \setminus \{ 0 \}$ so that $\beta_{-j,z,\tau} = \overline{\beta_{j,z,\tau}}$ and $\xi_{-j,z,\tau} = \overline{\xi_{j,z,\tau}}$. We can then build an eigendecomposition
\begin{equation}
    \label{eq:v_z_tau_eigen}
    V_{z,\tau} \xi_{j,z,\tau} = i \omega_{j,z,\tau} \xi_{j,z,\tau},
\end{equation}
with $i\omega_{j,z,\tau} = b_z(\beta_{j,z,\tau})$, and $\xi_{j,z,\tau}$ as above for $j \in \mathbb Z \setminus \{ 0 \} $, and $\omega_{0,z,\tau} = 0$, $\xi_{0,z,\tau} = 1_X$. The eigendecomposition~\eqref{eq:v_z_tau_eigen} completely characterizes $V_{z,\tau}$.

\begin{rk}
    The extension $b_z \supset \tilde q_z^{-1}$ is introduced since the spectrum of $Q_{z,\tau}$ is not guaranteed to lie within the domain of definition $i [-(2z)^{-1}, (2z)^{-1}]$ of $\tilde q_z^{-1}$ for any $\tau >0$. In practical applications, we have not observed any instances of (numerical approximations of) eigenvalues of $Q_{z,\tau}$ that do not lie in $i [-(2z)^{-1}, (2z)^{-1}]$, so we work with $\tilde q_z^{-1}$ without making an explicit choice of extension $b_z$.
\end{rk}

To compute the eigenpairs $(\beta_{j,z,\tau}, \xi_{j,z,\tau})$, \cite{GiannakisValva24b} formulates an associated variational eigenvalue problem to~\eqref{eq:q_z_tau_eigen}. Define the sesquilinear forms $A_\tau \colon H_V \times H_V \to \mathbb C$ and $B_z \colon H_V \times H_V \to \mathbb C$ such that
\begin{displaymath}
    A_\tau(u, v) = \langle G_{\tau/2} u, V G_{\tau/2} v \rangle_H, \quad B_z(u,v) = \langle (z- V) u, (z- V) v\rangle_H.
\end{displaymath}
It is shown that $(\beta_{j,z,\tau}, \xi_{j,z,\tau})$ solves~\eqref{eq:q_z_tau_eigen} for $\beta_{j,z,\tau} \neq 0$ iff $\xi_{j,z,\tau} = (z - V) v_{j,z,\tau}$ and $(\beta_{j,z,\tau}, v_{j,z,\tau})$ solves the following variational problem.

\begin{defn}[variational eigenvalue problem for $Q_{z,\tau}$]
    \label{def:variational_eigen}
    Find $\beta_{j,z,\tau} \in i\mathbb R$ and $v_{j,z,\tau} \in \tilde H \setminus \{ 0 \}$ such that
    \begin{displaymath}
        A_\tau(u, v_{j,z,\tau}) = \beta_{j,z,\tau} B_z(u, v_{j,z,\tau}), \quad \forall u \in H_V.
    \end{displaymath}
\end{defn}

The variational problem in \cref{def:variational_eigen} is ``physics-informed'', in the sense that the sesquilinear forms $A_\tau$ and $B_z$ can both be evaluated on elements $u = \iota f \in H_V$ and $v = \iota g \in H_V$ with continuous representatives $f, g \in C^1(M)$ using the generating vector field $\vec V \colon M \to T M$ of the dynamics. In particular, using~\eqref{eq:generator_c1}, we get
\begin{align}
    \label{eq:sesqui_a}
    A_\tau(u,v) &= \int_{X \times X \times X} k_\tau'(w, x) k_\tau'(w, y) u(x) v(y) \, d (\mu \times \mu \times \mu)(w, x, y), \\
    \nonumber
    B_z(u, v) &= \int_X \left((z- \vec V \cdot \nabla) f\right) \left((z- \vec V \cdot \nabla) g\right) \, d\mu,
\end{align}
where $k_\tau'(\cdot, y) = \vec V \cdot \nabla k(\cdot, x)$.
In addition, if $f,g \in \ran K_\tau$, i.e., $f = K_\tau r$ and $ g = K_\tau s$ for $r, s\in H$, we have
\begin{equation}
    \label{eq:sesqui_b}
    B_z(u,v) = \int_{X \times X \times X} (z k_\tau(w, x) - k_\tau'(w, x))(z k_\tau(w, y) - k_\tau'(w, y)) r(x) s(y) \, d(\mu \times \mu \times \mu)(w, x,y).
\end{equation}
Thus, in such cases, $A_\tau(u, v)$ and $B_z(u, v)$ can be computed by ``pushing'' the action of the generator $V$ to the directional derivatives $k'_\tau(\cdot, x)$ of the kernel sections $k_\tau(\cdot, x)$ with respect to the dynamical vector field $\vec V$.

The approach of \cite{GiannakisValva24b} utilizes automatic differentiation to compute $k'_\tau$ without discretization errors that affect finite-difference approximations of the generator (e.g., \cites{Giannakis19,DasEtAl21}). Using $E_l = \spn \{\phi_{1,\tau}, \ldots, \phi_{l,\tau} \} \subset H_V$ as Galerkin approximation spaces, approximate solutions $(\beta_{j,\tau,lj, v_{j,z,\tau,l}})$ are computed by restricting the trial and test functions in \cref{def:variational_eigen} to lie in $E_l$. Numerically, this is equivalent to solving a matrix generalized eigenvalue problem
\begin{displaymath}
    \bm A_\tau \bm c_{j,z,\tau,l} = \beta_{j,z,\tau,l} \bm B_z \bm c_{j, z, \tau, l}, \quad \beta_{j,z,\tau,l} \in i \mathbb R, \quad \bm c_{j,z,\tau,l} \in \mathbb C^l \setminus \{ 0 \}.
\end{displaymath}
Here $\bm A_\tau = [A_\tau(\phi_i, \phi_j)]_{i,j=1}^l$ and $\bm B_z = [B_z(\phi_i, \phi_j)]_{i,j=1}^l$ are $l\times l$ matrices whose elements are computed using~\eqref{eq:sesqui_a} and~\eqref{eq:sesqui_b}, respectively, in conjunction with the kernel integral representation~\eqref{eq:varphi} of the $C^1(M)$ representatives $\varphi_{j,\tau}$ of the basis functions. Moreover, the generalized eigenvector $\bm c_{j,z,\tau,l} = (c_{1j,z,\tau,l}, \ldots, c_{lj,z,\tau,l})$ contains the expansion coefficients of $v_{j,z,\tau,l}$ in the kernel eigenbasis,
\begin{displaymath}
    v_{j,z,\tau,l} = \sum_{i=1}^l c_{ij,z,\tau,l} \phi_i.
\end{displaymath}
Specific formulas for the matrix elements of $\bm A_\tau$ and $\bm B_z$ when using the Markov kernel construction from \cref{app:markov} can be found in \cite{GiannakisValva24b}*{section~4.3}. By convention, we order solutions in increasing order of Dirichlet energy~\eqref{eq:dirichlet_energy}, viz.
\begin{displaymath}
    \mathcal E(v_{j,z,\tau,l}) = \sum_{i=1}^l \frac{\lvert c_{ij,z,\tau,l}\rvert^2}{\lambda_j}.
\end{displaymath}

By results on Galerkin approximation of variational eigenvalue problems \cite{BabuskaOsborn91}, as $l \to \infty$ $\beta_{j,z,\tau,l}$ converges to $\beta_{j,z,\tau}$ and, for an appropriate choice of eigenvectors, $v_{j,z,\tau,l}$ converges to $v_{j,z,\tau}$ in the norm of $H_V$ for an appropriate choice of eigenvectors. The latter, implies convergence of $\xi_{j,z,\tau,l} := (z - V) v_{j,z,\tau,l}$ in the norm of $H$. Note that if $\beta_{j,z,\tau}$ is nonzero, $v_{j,z,\tau}$ lies in $\ran G_{\tau/2}$ and thus has a representative in $\mathcal H_\tau \subseteq \mathcal H$. Thus, the Dirichlet energy $\mathcal E(v_{j,z,\tau})$ is finite so long as $\beta_{j,z,\tau} \neq 0$.

The scheme also has a data-driven formulation wherein all kernel integral operators are replaced by their counterparts induced from the sampling measures $\mu_N$, as described in \cref{sec:data_driven_overview,app:markov}. These methods converge in the limit of large data, $N \to \infty$; see \cite{GiannakisValva24b}*{section~4.4} for further details.

% \bibliography{bibliography}
% \bib, bibdiv, biblist are defined by the amsrefs package.
\begin{bibdiv}
\begin{biblist}

\bib{Aronszajn50}{article}{
      author={Aronszajn, N.},
       title={Theory of reproducing kernels},
        date={1950},
     journal={Trans. Amer. Math. Soc.},
      volume={68},
      number={3},
       pages={337\ndash 404},
}

\bib{BabuskaOsborn91}{incollection}{
      author={Babu\v{s}ka, I.},
      author={Osborn, J.},
       title={Eigenvalue problems},
        date={1991},
   booktitle={Finite element methods (part 1)},
      editor={Ciarlet, P.~G.},
      editor={Lions, J.~L.},
      series={Handbook of Numerical Analysis},
      volume={II},
   publisher={North-Holland},
     address={Amsterdam},
       pages={641\ndash 787},
}

\bib{Baladi00}{book}{
      author={Baladi, V.},
       title={Positive transfer operators and decay of correlations},
      series={Advanced Series in Nonlinear Dynamics},
   publisher={World Scientific},
     address={Singapore},
        date={2000},
      volume={16},
}

\bib{BaladiTsujii07}{article}{
      author={Baladi, V.},
      author={Tsujii, M.},
       title={Anisotropic {H\"o}lder and {S}obolev spaces for hyperbolic diffeomorphisms},
        date={2008},
     journal={Ann. Inst. Fourier},
      volume={57},
      number={1},
}

\bib{BenentiEtAl01}{article}{
      author={Benenti, G.},
      author={Casati, G.},
      author={Montangero, S.},
      author={Shepelyansky, D.~L.},
       title={Efficient quantum computing of complex dynamics},
        date={2001},
     journal={Phys. Rev. Lett.},
      volume={87},
       pages={227901},
}

\bib{BerryEtAl17}{article}{
      author={Berry, D.~W.},
      author={Childs, A.~M.},
      author={Ostrander, A.},
      author={Wang, G.},
       title={Quantum algorithm for linear differential equations with exponentially improved dependence on precision},
        date={2017},
     journal={Commun. Math. Phys.},
      volume={356},
       pages={1057\ndash 1081},
}

\bib{BerryEtAl15}{article}{
      author={Berry, T.},
      author={Giannakis, D.},
      author={Harlim, J.},
       title={Nonparametric forecasting of low-dimensional dynamical systems},
        date={2015},
     journal={Phys. Rev. E.},
      volume={91},
}

\bib{BerryHarlim16b}{article}{
      author={Berry, T.},
      author={Harlim, J.},
       title={Semiparametric modeling: {C}orrecting low-dimensional model error in parametric models},
        date={2016},
     journal={J. Comput. Phys.},
      volume={308},
       pages={305\ndash 321},
}

\bib{BharadwajSreenivasan20}{article}{
      author={Bharadwaj, S.},
      author={Sreenivasan, K.~R.},
       title={Quantum computation of fluid dynamics},
        date={2020},
     journal={Indian Acad. Sci. Conf. Ser.},
      volume={3},
      number={1},
       pages={77\ndash 96},
}

\bib{Blank17}{article}{
      author={Blank, M.},
       title={Egodic averaging with and without invariant measures},
        date={2017},
     journal={Nonlinearity},
      volume={30},
       pages={4649\ndash 4664},
}

\bib{BlankEtAl02}{article}{
      author={Blank, M.},
      author={Keller, G.},
      author={Liverani, C.},
       title={Ruelle--{P}erron--{F}robenius spectrum for {A}nosov maps},
        date={2002},
     journal={Nonlinearity},
      volume={15},
      number={6},
       pages={1905\ndash 1973},
}

\bib{BondarEtAl19}{article}{
      author={Bondar, D.~I.},
      author={Gay-Balmaz, F.},
      author={Tronci, C.},
       title={Koopman wavefunctions and classical–quantum correlation dynamics},
        date={2019},
     journal={Proc. Roy. Soc. A},
      volume={475},
       pages={20180879},
}

\bib{BoulleColbrook24}{misc}{
      author={Boull{\'e}, N.},
      author={Colbrook, M.},
       title={Multiplicative {D}ynamic {M}ode {D}ecomposition},
        date={2024},
         url={https://arxiv.org/pdf/2405.05334},
}

\bib{ButterleyLiverani07}{article}{
      author={Butterley, O.},
      author={Liverani, C.},
       title={Smooth {A}nosov flows: {C}orrelation spectra and stability},
        date={2007},
     journal={J. Mod. Dyn.},
      volume={1},
      number={2},
       pages={301\ndash 322},
}

\bib{CoifmanHirn13}{article}{
      author={Coifman, R.},
      author={Hirn, M.},
       title={Bi-stochastic kernels via asymmetric affinity functions},
        date={2013},
     journal={Appl. Comput. Harmon. Anal.},
      volume={35},
      number={1},
       pages={177\ndash 180},
}

\bib{CoifmanEtAl08}{article}{
      author={Coifman, R.~R.},
      author={Shkolnisky, Y.},
      author={Sigworth, F.~J.},
      author={Singer, A.},
       title={Graph {L}aplacian tomography from unknown random projections},
        date={2008},
     journal={IEEE Trans. Image Process.},
      volume={17},
      number={10},
       pages={1891\ndash 1899},
}

\bib{Colbrook24}{incollection}{
      author={Colbrook, M.},
       title={The multiverse of dynamic mode decomposition algorithms},
        date={2024},
   booktitle={Handbook of numerical analysis},
   publisher={Amsterdam},
       pages={88},
}

\bib{ColbrookTownsend24}{article}{
      author={Colbrook, M.~J.},
      author={Townsend, A.},
       title={Rigorous data-driven computation of spectral properties of {K}oopman operators for dynamical systems},
        date={2024},
     journal={Commun. Pure Appl. Math.},
      volume={77},
       pages={221\ndash 283},
}

\bib{CostaEtAl19}{article}{
      author={Costa, P. C.~S.},
      author={Jornan, S.},
      author={Ostrander, A.},
       title={Quantum algorithm for simulating the wave equation},
        date={2019},
     journal={Phys. Rev. A},
      volume={99},
}

\bib{DasGiannakis23}{article}{
      author={Das, S.},
      author={Giannakis, D.},
       title={On harmonic {H}ilbert spaces on compact abelian groups},
        date={2023},
     journal={J. Fourier Anal. Appl.},
      volume={29},
      number={1},
       pages={12},
}

\bib{DasEtAl23}{article}{
      author={Das, S.},
      author={Giannakis, D.},
      author={Montgomery, M.},
       title={Correction to: On harmonic {H}ilbert spaces on compact abelian groups},
        date={2023},
     journal={J. Fourier Anal. Appl.},
      volume={29},
      number={6},
       pages={67},
}

\bib{DasEtAl21}{article}{
      author={Das, S.},
      author={Giannakis, D.},
      author={Slawinska, J.},
       title={Reproducing kernel {H}ilbert space compactification of unitary evolution groups},
        date={2021},
     journal={Appl. Comput. Harmon. Anal.},
      volume={54},
       pages={75\ndash 136},
}

\bib{Oliveira09}{book}{
      author={de~Oliveira, C.~R.},
       title={Intermediate spectral theory and quantum dynamics},
      series={Progress in Mathematical Physics},
   publisher={Birkh{\"a}user},
     address={Basel},
        date={2009},
      volume={54},
}

\bib{DellaRiciaWiener66}{article}{
      author={Della~Ricia, G.},
      author={Wiener, N.},
       title={Wave mechanics in classical phase space, {B}rownian motion, and quantum theory},
        date={1966},
     journal={J. Math. Phys.},
      volume={7},
      number={8},
       pages={1732\ndash 1383},
}

\bib{DodinStartsev21}{article}{
      author={Dodi, I.~Y.},
      author={Startsev, E.~A.},
       title={On applications of quantum computing to plasma simulations},
        date={2021},
     journal={Phys. Plasmas},
      volume={28},
}

\bib{EisnerEtAl15}{book}{
      author={Eisner, T.},
      author={Farkas, B.},
      author={Haase, M.},
      author={Nagel, R.},
       title={Operator theoretic aspects of ergodic theory},
      series={Graduate Texts in Mathematics},
   publisher={Springer},
     address={Cham},
        date={2015},
      volume={272},
}

\bib{ElliottGu18}{article}{
      author={Elliott, T.~J.},
      author={Gu, M.},
       title={Superior memory efficiency of quantum devices for the simulation of continuous-time stochastic processes},
        date={2018},
     journal={npj Quantum Inf.},
      volume={4},
       pages={18},
}

\bib{EngelEtAl19}{article}{
      author={Engel, A.},
      author={Smith, G.},
      author={Parker, S.~E.},
       title={Quantum algorithm for the {V}lasov equation},
        date={2019},
     journal={Phys. Rev. A},
      volume={100},
}

\bib{Feichtinger79}{article}{
      author={Feichtinger, H.~G.},
       title={Gewichtsfunktionen auf lokalkompakten {G}ruppen},
        date={1979},
     journal={{\"O}sterreich. Akad. Wiss. Math.-Natur. Kl. Sitzungsber. II},
      volume={188},
      number={8--10},
       pages={451\ndash 471},
}

\bib{FeichtingerEtAl07}{article}{
      author={Feichtinger, H.~G.},
      author={Pandey, S.~S.},
      author={Werther, T.},
       title={Minimal norm interpolation in harmonic {H}ilbert spaces and {W}iener amalgam spaces on locally compact abelian groups},
        date={2007},
     journal={J. Math. Kyoto Univ.},
      volume={47},
      number={1},
       pages={65\ndash 78},
}

\bib{FreemanEtAl23}{article}{
      author={Freeman, D.~C.},
      author={Giannakis, D.},
      author={Mintz, B.},
      author={Ourmazd, A.},
      author={Slawinska, J.},
       title={Data assimilation in operator algebras},
        date={2023},
     journal={Proc. Natl. Acad. Sci.},
      volume={120},
      number={8},
}

\bib{FreemanEtAl24}{article}{
      author={Freeman, D.~C.},
      author={Giannakis, D.},
      author={Slawinska, J.},
       title={Quantum mechanics for closure of dynamical systems},
        date={2024},
     journal={Multiscale Model. Simul.},
      volume={22},
      number={1},
       pages={283\ndash 333},
}

\bib{Gaitan20}{article}{
      author={Gaitan, F.},
       title={Finding flows of a {N}avier--{S}tokes fluid through quantum computing},
        date={2020},
     journal={npj Quantum Inf.},
      volume={6},
       pages={61},
}

\bib{Giannakis19}{article}{
      author={Giannakis, D.},
       title={Data-driven spectral decomposition and forecasting of ergodic dynamical systems},
        date={2019},
     journal={Appl. Comput. Harmon. Anal.},
      volume={47},
      number={2},
       pages={338\ndash 396},
}

\bib{GiannakisEtAl24}{misc}{
      author={Giannakis, D.},
      author={Latifi~Jebelli, M.~J.},
      author={Montgomery, M.},
      author={Pfeffer, P.},
      author={Schumacher, J.},
      author={Slawinska, J.},
       title={Tensor network approximation of {K}oopman operators},
        date={2024},
         url={https://arxiv.org/abs/2407.07242},
}

\bib{GiannakisMontgomery24}{misc}{
      author={Giannakis, D.},
      author={Montgomery, M.},
       title={An algebra structure for reproducing kernel {H}ilbert spaces},
        date={2024},
}

\bib{GiannakisEtAl22}{article}{
      author={Giannakis, D.},
      author={Ourmazd, A.},
      author={Pfeffer, P.},
      author={Schumacher, J.},
      author={Slawinska, J.},
       title={Embedding classical dynamics in a quantum computer},
        date={2022},
     journal={Phys. Rev. A},
      volume={105},
}

\bib{GiannakisEtAl15}{inproceedings}{
      author={Giannakis, D.},
      author={Slawinska, J.},
      author={Zhao, Z.},
       title={Spatiotemporal feature extraction with data-driven {K}oopman operators},
        date={2015},
   booktitle={Proceedings of the 1st international workshop on feature extraction: Modern questions and challenges at nips 2015},
      editor={Storcheus, D.},
      editor={Rostamizadeh, A.},
      editor={Kumar, S.},
      series={Proceedings of Machine Learning Research},
      volume={44},
   publisher={PMLR},
     address={Montreal, Canada},
       pages={103\ndash 115},
         url={https://proceedings.mlr.press/v44/giannakis15.html},
}

\bib{GiannakisValva24}{article}{
      author={Giannakis, D.},
      author={Valva, C.},
       title={Consistent spectral approximation of {K}oopman operators using resolvent compactification},
        date={2024},
     journal={Nonlinearity},
      volume={37},
      number={7},
}

\bib{GiannakisValva24b}{misc}{
      author={Giannakis, D.},
      author={Valva, C.},
       title={Physics-informed spectral approximation of {K}oopman operators},
        date={2024},
         url={https://arxiv.org/abs/2408.05663},
}

\bib{Grochenig07}{incollection}{
      author={Gr{\"o}chenig, K.},
       title={Weight functions in time-frequency analysis},
        date={2007},
   booktitle={Pseudodifferential operators: Partial differential equations and time-frequency analysis},
      editor={Rodino, L.},
      editor={others},
      series={Fields Inst. Commun.},
      volume={52},
   publisher={American Mathematical Society},
     address={Providence},
       pages={343\ndash 366},
}

\bib{Halmos56}{book}{
      author={Halmos, P.~R.},
       title={Lectures on ergodic theory},
   publisher={American Mathematical Society},
     address={Providence},
        date={1956},
}

\bib{HalmosVonNeumann42}{article}{
      author={Halmos, P.~R.},
      author={von Neumann, J.},
       title={Operator methods in classical mechanics, {II}},
        date={1942},
     journal={Ann. Math.},
      volume={43},
      number={2},
       pages={332\ndash 350},
}

\bib{Joseph20}{article}{
      author={Joseph, I.},
       title={Koopman-von {N}eumann approach to quantum simulation of nonlinear classical dynamics},
        date={2020},
     journal={Phys. Rev. Research},
      volume={2},
       pages={043102},
}

\bib{JosephEtAl23}{article}{
      author={Joseph, I.},
      author={Shi, Y.},
      author={Porter, M.~D.},
      author={Castelli, A.~R.},
      author={Geyko, V.~I.},
      author={Graziani, F.~R.},
      author={Libby, S.~B.},
      author={DuBois, J.~L.},
       title={Quantum computing for fusion energy science applications},
        date={2023},
     journal={Phys. Plasmas},
      volume={30},
      number={1},
}

\bib{Kacewicz06}{article}{
      author={Kacewicz, B.},
       title={Almost optimal solution of initial-value problems by randomized and quantum algorithms},
        date={2006},
     journal={J. Complex.},
      volume={22},
       pages={676\ndash 690},
}

\bib{KalevHen21}{article}{
      author={Kalev, A.},
      author={Hen, I.},
       title={Quantum algorithm for simulating {H}amiltonian dynamics with an off-diagonal series expansion},
        date={2021},
     journal={Quantum},
      volume={5},
       pages={426\ndash 449},
}

\bib{Kaniuth09}{book}{
      author={Kaniuth, E.},
       title={A course in commutative {B}anach algebras},
      series={Graduate Texts in Mathematics},
   publisher={Springer Science+Media},
        date={2009},
      volume={246},
}

\bib{Kocergin75}{article}{
      author={Ko{\v c}ergin, A.~V.},
       title={On mixing in special flows over a shifting of segments and in smooth flows on surfaces},
        date={1975},
     journal={Math. USSR Sbornik},
      volume={25},
      number={3},
       pages={441\ndash 469},
}

\bib{Koopman31}{article}{
      author={Koopman, B.~O.},
       title={Hamiltonian systems and transformation in {H}ilbert space},
        date={1931},
     journal={Proc. Natl. Acad. Sci.},
      volume={17},
      number={5},
       pages={315\ndash 318},
}

\bib{KoopmanVonNeumann32}{article}{
      author={Koopman, B.~O.},
      author={von Neumann, J.},
       title={Dynamical systems of continuous spectra},
        date={1932},
     journal={Proc. Natl. Acad. Sci.},
      volume={18},
      number={3},
       pages={255\ndash 263},
}

\bib{LawEtAl14}{article}{
      author={Law, K.},
      author={Shukla, A.},
      author={Stuart, A.~M.},
       title={Analysis of the {3DVAR} filter for the partially observed {L}orenz'63 model},
        date={2013},
     journal={Discrete Contin. Dyn. Syst.},
      volume={34},
      number={3},
       pages={1061\ndash 10178},
}

\bib{Lehmann04}{book}{
      author={Lehmann, D.},
       title={Mathematical methods of many-body quantum field theory},
      series={Research Notes in Mathematics},
   publisher={Chapman \& Hall/CRC},
     address={Boca Raton},
        date={2004},
      volume={436},
}

\bib{LeytonOsborne08}{misc}{
      author={Leyton, S.~K.},
      author={Osborne, T.~J.},
       title={A quantum algorithm to solve nonlinear differential equations},
        date={2008},
         url={https://arxiv.org/abs/0812.4423},
}

\bib{LiuEtAl21}{article}{
      author={Liu, J.-P.},
      author={Kolden, H.~{\O}.},
      author={Krovi, H.~K.},
      author={Childs, A.~M.},
       title={Efficient quantum algorithm for dissipative nonlinear differential equations},
        date={2021},
     journal={Proc. Natl. Acad. Sci.},
      volume={118},
      number={35},
       pages={e2026805118},
}

\bib{LloydEtAl20}{misc}{
      author={Lloyd, S.},
      author={DePalma, G.},
      author={Gokler, C.},
      author={Kiani, B.},
      author={Liu, Z.-W.},
      author={Marvian, M.},
      author={Tennie, F.},
      author={Palmer, T.},
       title={Quantum algorithm for nonlinear differential equations},
        date={2020},
         url={https://arxiv.org/pdf/2011.06571.pdf},
}

\bib{Lorenz63}{article}{
      author={Lorenz, E.~N.},
       title={Deterministic nonperiodic flow},
        date={1963},
     journal={J. Atmos. Sci.},
      volume={20},
       pages={130\ndash 141},
}

\bib{LuzzattoEtAl05}{article}{
      author={Luzzatto, S.},
      author={Melbourne, I.},
      author={Paccaut, F.},
       title={The {L}orenz attractor is mixing},
        date={2005},
     journal={Comm. Math. Phys.},
      volume={260},
      number={2},
       pages={393\ndash 401},
}

\bib{Mauro02}{article}{
      author={Mauro, D.},
       title={On {K}oopman–von {N}eumann waves},
        date={2002},
     journal={Int. J. Mod. Phys. A},
      volume={17},
       pages={1301\ndash 1325},
}

\bib{MezzacapoEtAl15}{article}{
      author={Mezzacapo, A.},
      author={Sanz, M.},
      author={Lamata, L.},
      author={Egusquiza, I.L.},
      author={Succi, S.},
      author={Solano, E.},
       title={Quantum simulator for transport phenomena in fluid flows},
        date={2015},
     journal={Sci. Rep.},
      volume={5},
       pages={13153},
}

\bib{OttoRowley21}{article}{
      author={Otto, S.~E.},
      author={Rowley, C.~W.},
       title={Koopman operators for estimation and control of dynamical systems},
        date={2021},
     journal={Annu. Rev. Control Robot. Auton. Syst.},
      volume={4},
       pages={59\ndash 87},
}

\bib{Oxtoby53}{article}{
      author={Oxtoby, J.~C.},
       title={Stepanoff flows on the torus},
        date={1953},
     journal={Proc. Amer. Math. Soc.},
      volume={4},
       pages={982\ndash 987},
}

\bib{PaulsenRaghupathi16}{book}{
      author={Paulsen, V.~I.},
      author={Raghupathi, M.},
       title={An introduction to the theory of reproducing kernel {H}ilbert spaces},
      series={Cambridge Studies in Advanced Mathematics},
   publisher={Cambridge University Press},
     address={Cambridge},
        date={2016},
      volume={152},
}

\bib{PfefferEtAl22}{article}{
      author={Pfeffer, P.},
      author={Heyder, F.},
      author={Schumacher, J.},
       title={Hybrid quantum-classical reservoir computing of thermal convection flow},
        date={2022},
     journal={Phys. Rev. Research},
      volume={4},
}

\bib{Sprott03}{book}{
      author={Sprott, J.~C.},
       title={Chaos and time-series analysis},
   publisher={Oxford University Press},
     address={Oxford},
        date={2003},
}

\bib{SriperumbudurEtAl11}{article}{
      author={Sriperumbudur, B.~K.},
      author={Fukumizu, K.},
      author={Lanckriet, G.~R.},
       title={Universality, characteristic kernels and {RKHS} embedding of measures},
        date={2011},
     journal={J. Mach. Learn. Res.},
      volume={12},
       pages={2389\ndash 2410},
}

\bib{SteinwartChristmann08}{book}{
      author={Steinwart, I.},
      author={Christmann, A.},
       title={Support vector machines},
      series={Information Science and Statistics},
   publisher={Springer},
     address={New York},
        date={2008},
}

\bib{Stone32}{article}{
      author={Stone, M.~H.},
       title={On one-parameter unitary groups in {H}ilbert space},
        date={1932},
     journal={Ann. Math},
      volume={33},
      number={3},
       pages={643\ndash 648},
}

\bib{Takesaki01}{book}{
      author={Takesaki, M.},
       title={Theory of operator algebras {I}},
      series={Encyclopaedia of Mathematical Sciences},
   publisher={Springer},
     address={Berlin},
        date={2001},
      volume={124},
}

\bib{TenniePalmer23}{article}{
      author={Tennie, F.},
      author={Palmer, T.~N.},
       title={Quantum computers for weather and climate prediction},
        date={2023},
     journal={Bull. Amer. Math. Soc.},
      volume={104},
      number={2},
       pages={E488\ndash E500},
}

\bib{TerElstLemanczyk17}{article}{
      author={ter Elst, A. F.~M.},
      author={Lema\'nczyk, M.},
       title={On one-parameter {K}oopman groups},
        date={2017},
     journal={Ergodic Theory Dyn. Syst.},
      volume={37},
       pages={1635\ndash 1656},
}

\bib{Tucker99}{article}{
      author={Tucker, W.},
       title={The {L}orenz attractor exists},
        date={1999},
     journal={C. R. Acad. Sci. Paris, Ser. I},
      volume={328},
       pages={1197\ndash 1202},
}

\bib{VonLuxburgEtAl08}{article}{
      author={von Luxburg, U.},
      author={Belkin, M.},
      author={Bousquet, O.},
       title={Consitency of spectral clustering},
        date={2008},
     journal={Ann. Stat.},
      volume={26},
      number={2},
       pages={555\ndash 586},
}

\bib{WilkieBrumer97b}{article}{
      author={Wilkie, J.},
      author={Brumer, P.},
       title={Quantum-classical correspondence via {L}iouville dynamics. {II}. {C}orrespondence for chaotic {H}amiltonian systems},
        date={1997},
     journal={Phys. Rev. A},
      volume={55},
      number={1},
       pages={43\ndash 61},
}

\bib{WilkieBrumer97a}{article}{
      author={Wilkie, J.},
      author={Brumer, P.},
       title={Quanum-classical correspondence via {L}iouville dynamics. {I.} {I}ntegrable systems and chaotic spectral decomposition},
        date={1997},
     journal={Phys. Rev. A},
      volume={55},
      number={1},
       pages={27\ndash 42},
}

\bib{ZelnikManorPerona04}{inproceedings}{
      author={Zelnik-Manor, L.},
      author={Perona, P.},
       title={Self-tuning spectral clustering},
        date={2004},
   booktitle={Advances in neural information processing systems},
      volume={17},
       pages={1601\ndash 1608},
}

\end{biblist}
\end{bibdiv}
\end{document}